\let\counterwithout\relax
\title{Stochastic Conditional Gradient++: (Non-)Convex Minimization and Continuous Submodular Maximization\thanks{The authors are listed in alphabetical order.}}
\newcommand{\OPT}{\text{OPT}}
\newcommand{\SCGPP}{{$\text{SCG}{++} $}\xspace}
\newcommand{\NMSCGPP}{{$\text{SMCG}{++} $}\xspace}
\newcommand{\FWPP}{{$\text{SFW}{++} $}\xspace}
\newcommand{\scg}{{\texttt{Stochastic Continuous Greedy}{++}}\xspace}
\newcommand{\nmscg}{{\texttt{Stochastic Measured Continuous Greedy}{++}}\xspace}
\newcommand{\sfw}{{\texttt{Stochastic Frank-Wolfe}{++}}\xspace}
\crefname{hypothesis}{Hypothesis}{Hypotheses}
\title{Stochastic Conditional Gradient++: (Non-)Convex Minimization and Continuous Submodular Maximization\footnote{T\lowercase{he authors are listed in alphabetical order}.}
	\thanks{\lightblue{A subset of this work (Section 4.1, Theorem 5.3 of Section 5.1, and Section 5.2) appeared in \cite{NIPS2019_9466}, which contains the results on maximizing monotone DR-submodular functions.}}
}
\author{Hamed Hassani\thanks{University of Pennsylvania 
  (\email{hassani@seas.upenn.edu}).}
\and Amin Karbasi\thanks{Yale University
  (\email{amin.karbasi@yale.edu}).}
\and Aryan Mokhtari\thanks{The University of Texas at Austin
	(\email{mokhtari@austin.utexas.edu}).}
\and Zebang Shen\thanks{University of Pennsylvania
	(\email{zebang@seas.upenn.edu}).}}
\begin{document}
\maketitle
\begin{abstract}
In this paper, we consider the general \textit{non-oblivious} stochastic optimization where the underlying stochasticity may change during the optimization procedure and  depends on the point at which the function is evaluated. We develop \sfw (\FWPP), an efficient variant of the conditional gradient method for
minimizing a  smooth non-convex  function  subject to a convex body constraint. We show that \FWPP converges to an $\epsilon$-first order stationary point by using $O(1/\epsilon^3)$ stochastic gradients. Once further structures are present, \FWPP's theoretical guarantees, in terms of the convergence rate and  quality of its solution,   improve. In particular, for minimizing a convex  function, \FWPP achieves an $\epsilon$-approximate optimum while using   $O(1/\epsilon^2)$ stochastic gradients. It is known that this rate is optimal  in terms of stochastic gradient evaluations.  Similarly, for maximizing a monotone continuous DR-submodular function, a slightly  different form of \FWPP, called \scg (\SCGPP), achieves a tight  $[(1-1/e)\OPT -\epsilon]$ solution while using $O(1/\epsilon^2)$ stochastic gradients. Through an information theoretic argument, we also prove that \SCGPP's convergence rate is optimal. Finally, for maximizing a non-monotone continuous DR-submodular function, we can achieve a  $[(1/e)\OPT -\epsilon]$ solution by using $O(1/\epsilon^2)$ stochastic gradients. We should highlight that our results and our novel variance reduction technique trivially extend to the standard and easier \textit{oblivious} stochastic optimization settings for (non-)convex and continuous submodular settings.

%
%
\end{abstract}

%

\vspace{-1mm}
\begin{keywords}
non-convex minimization, submodular maximization, stochastic optimization, conditional gradient method, first-order method, variance reduction
\end{keywords}

\vspace{-1mm}
\begin{AMS}
	49M05, 49M15, 49M37, 90C06, 90C30
\end{AMS}
\vspace{-1mm}

\vspace{-1mm}
\section{Introduction}
In this paper, we consider the following \emph{non-oblivious} stochastic maximization problem:
\vspace{-1mm}
\begin{equation}
	\max_{\xB \in \CM} F(\xB) \ :=\ \max_{\xB \in \CM}\ \EBB_{\zB\sim p(\zB; \xB)} [\tilde{F}(\xB; \zB)],\vspace{-1mm}
	\label{eqn_DR_submodular_maximization_loss}
\end{equation}
where $\xB\in \RBB_+^d$ is the decision variable, $\CM\subseteq \RBB^d$ is a  feasible set, $\zB\in\ZM$ is a random variable with distribution $p(\zB;\xB)$, and the function $F:\RBB^d \rightarrow \RBB$ is defined as the expectation over a set of \blue{smooth} stochastic functions $\tilde{F}:\RBB^d\times\ZM \rightarrow \RBB$. 
Problem \eqref{eqn_DR_submodular_maximization_loss} is called non-oblivious as the underlying distribution depends on the variable $\xB$ and may change during the optimization procedure. Note that the standard stochastic (convex/non-convex) optimization is a special case of Problem~\eqref{eqn_DR_submodular_maximization_loss}. We focus on providing efficient solvers for Problem \eqref{eqn_DR_submodular_maximization_loss} in terms of the sample complexity of $\zB$ (a.k.a., calls to the stochastic oracle),  where $F$ is (non-)concave or continuous submodular and the feasible set $\CM$ is a bounded convex body. Note that maximizing a non-concave function $F$ is equivalent to minimizing a non-convex function $-F$. However, in order to unify the language between the non-convex minimization and  continuous submodular maximization, we resort to  formulation \eqref{eqn_DR_submodular_maximization_loss}. In the following, we discuss three concrete instances of non-oblivious stochastic optimization, namely, multi-linear extension of a discrete submodular function, Maximum a Posteriori (MAP) inference in determinantal point processes, and policy gradient in reinforcement learning. In all these problems the stochasticity of the objective function crucially depends on the decision variable $\xB$ at which we evaluate the function. 

\textbf{Our contributions.} In this paper, we develop a new variance reduction technique for non-oblivious stochastic optimization problem~\eqref{eqn_DR_submodular_maximization_loss}. 
\blue{Note that the success of the variance reduction technique in the standard oblivious stochastic setting relies on the property that the difference of gradients at two points can unbiasedly estimated using a single sample.
However, for the more general non-oblivious case, this crucial property is missing, which invalidates the applicability of the previous arts in the problems of interest here (we elaborate this at the end of Section \ref{section_preliminaries}).
The key algorithmic contribution of this paper is a way to estimate the difference of gradients without introducing extra bias, as discussed in detail in Section \ref{section_stochastic_gradient_approximation}.
This draws a clear distinction of our work from the literature on stochastic first-order algorithms.
}
In particular, we show the following results for problem~\eqref{eqn_DR_submodular_maximization_loss}.

\begin{itemize}
\item For maximizing a general non-concave function $F$ (or minimizing a non-convex function), we develop \sfw (\FWPP) that converges to an $\epsilon$-first order stationary point using $O(1/\epsilon^3)$ stochastic gradients in total. Our result improves upon the previously best convergence rate of $O(1/\epsilon^4)$ by \cite{reddi2016stochastic} in the oblivious stochastic setting. Moreover, as a by-product, \FWPP provides the first trajectory complexity of $O(1/\epsilon^3)$ for policy gradient methods in reinforcement learning with convex constraints.  

\item When the function $F$ is concave, \FWPP achieves an $\epsilon$-approximate optimum while using   $O(1/\epsilon^2)$ stochastic gradients, thus achieving the optimum rate for this instance. This result improves upon the  previously best convergence rate of $O(1/\epsilon^3)$ in \cite{mokhtari2018stochastic}.  In the oblivious stochastic setting, the convergence rate of \FWPP is on par with the stochastic gradient sliding \cite{lan2016conditional}.

\item For maximizing a monotone DR-continuous function $F$, the \scg (\SCGPP) method is introduced, the first algorithm that achieves the tight $[(1-1/e)\OPT -\epsilon]$ solution by using $\OM({1}/{\epsilon^2})$ stochastic gradients in total. Through an information theoretic argument, we also show that no first-order algorithm can achieve a converegnce rate better than $\OM({1}/{\epsilon^2})$. This result improves upon the  previously best convergence rate of $O(1/\epsilon^3)$ in \cite{mokhtari2018conditional}.  Moreover, \blue{\SCGPP} leads to the fastest method for maximizing a multi-linear extension of a monotone submodular set function.

\item For maximizing a non-monotone DR-continuous function $F$, subject to a down-closed convex body $\CM$, we develop \nmscg (\NMSCGPP) that achieves a $[(1/e)\OPT -\epsilon]$ solution  by using at most $\OM({1}/{\epsilon^2})$ stochastic gradients. This result improves upon the  previously best convergence rate of $O(1/\epsilon^3)$ in \cite{mokhtari2018stochastic}.  Moreover, \NMSCGPP (along with lossless rounding schemes such as contention resolution, randomized pipage rounding, etc)  provides a rigorous $1/e$ approximation guarantee for MAP estimation of a determinantal point process, improving upon the semi heuristic $1/4$ approximation guarantee in \cite{gillenwater2012near}.
\end{itemize}

\subsection{Examples} In this subsection, we briefly mention some instances of the non-oblivious optimization problem in~\eqref{eqn_DR_submodular_maximization_loss}.
\vspace{2mm}

\noindent{\textbf{Multi-Linear Extension of a Discrete Submodular Set Function.}}
One canonical example of the stochastic optimization problem in 
~\eqref{eqn_DR_submodular_maximization_loss} is the multi-linear extension of a discrete 
submodular function.
Specifically, consider a discrete submodular set function $f: 2^V \rightarrow \RBB_+$ defined over the set $V$. 
The aim is to solve $\max_{S \in \mathcal{I}} f(S)$ where $\mathcal{I}$ encodes a matroid constraint. For this case, the greedy algorithm leads to a $1/2$ approximation guarantee, but one can achieve the optimal approximation guarantee of $(1-1/e)$ by maximizing its multilinear extension $F: [0,1]^V \to \mathbb{R}_+$, defined as
\vspace{-1mm}
\begin{equation} \label{multilinear_general} 
	F(\xB)\ :=\ \mathbb{E}_{\zB \sim \xB}[f(\zB(\xB))] \ :=\ \sum_{S \subseteq V } f(S) \prod_{i \in S} x_i 
	\prod_{j \notin S} (1-x_j).\vspace{-1mm}
	\end{equation} 
Here, each element $e$ of the random set $\zB(\xB)$ is sampled with probability $x_{e}$. 
This 
problem 
is an instance  of \eqref{eqn_DR_submodular_maximization_loss} if we define $\tilde{F}(\xB,\zB)$ as 
$f(\zB(\xB))$, the joint probability $p(\xB, \zB)$ as the distribution of the random set $\zB(\xB)$ 
(i.e., 
each coordinate 
$z_e$ is generated according to a Bernoulli distribution with parameter $x_e$), and the set $\CM$ 
as 
the convex hull of $\mathcal{I}$. Later, we show how constraint submodular maximization 
can be solved efficiently, providing a fast method for maximizing a multi-linear extension 
function. 
\vspace{2mm}

\noindent\textbf{MAP Inference in Determinental Point Processes (DPPs).} DPPs are a class of 
discrete 
probabilistic models that were introduced in statistical physics and random matrix theory. 
Due to their ability to model repulsion and negative correlations, they have shown to be key 
concepts for many applications in  machine learning \cite{kulesza2012determinantal}. Formally, given a positive definite matrix $A$ of size  $n$, a DPP assigns to any subset $S \subseteq [n] \triangleq \{1,2,\cdots, 
n\}$, a 
probability value $\text{Pr}(S) = \text{det}(A_S)/\text{det}(A)$. MAP inference in such a model 
consists 
of maximizing the value $\text{det}(A_S)$, or equivalently the log-likelihood $\log 
\text{det}(A_S)$, over all the subsets $S \subseteq [n]$. Indeed, the set function $f(S) = \log 
\text{det}(A_S)$ is submodular but generally non-monotone.   As a result, MAP inference in DPPs is 
an 
instance 
of a non-monotone submodular maximization problem. Moreover, MAP inference may be restricted 
to 
subsets that satisfy some given constraints. For  instance, for cardinality constraint, the 
problem is to find a subset with a size of at most $k$, which has the largest probability. There are in 
general 
two approaches to maximize the log-likelihood function in DPPs subject to feasibility constraints, 
both of which 
rely 
on appropriate continuous extensions.  The first approach is to  form the multi-linear extension, 
defined in \eqref{multilinear_general}, and solve the resulting constrained non-monotone 
submodular optimization problem. Since the multi-linear extension involves summing over 
exponentially many terms, it was generally believed that the optimization will be computationally 
expensive and convergence issues may arise. This paper overcomes these challenges completely 
by providing  the first $((1/e)\OPT-\epsilon)$ solution in $O(1/\epsilon^2)$ stochastic 
iterations, whenever  the sampled sets form a matroid and the function $\log 
\text{det}(A_S)$ is non-negative for all $S$.   Another approach,  proposed by 
\cite{gillenwater2012near}, is to form the so-called 
\emph{softmax} extension $G: [0,1]^n \to \mathbb{R}$ defined as
\vspace{-2mm}
\begin{equation*} G(\xB) = \mathbb{E}_{\zB\sim \xB}\left[\exp(f(\zB(\xB)))\right]  = \log \text{det} 
\left(\text{diag}(\xB) 
(A - I) + I\right).
\vspace{-1mm}
\end{equation*}
The soft-max extension is a deterministic function that can be maximized within a $1/4$ 
approximation to the optimal value $\OPT$. Unlike the multi-linear extension, no provable 
rounding scheme is known for the soft-max extension. Therefore, our approach not only improves the approximation ratio of the MAP estimator, but also enjoys a rigorous end-to-end guarantee for this problem. 
%
\vspace{2mm}

\noindent{\textbf{Reinforcement Learning.}}
Consider a discrete time index $h\geq0$ and a Markov system with states $s_h \in 
\SM$ and actions $a_h\in\AMM$. The probability distribution of the initial state is $\rho(s_0)$ and the 
conditional probability distribution of transitioning into $s_{h+1}$ given that we are in state $s_h$ 
and take action $a_h$ is $\PM(s_{h+1}| s_h, a_h)$. Actions are chosen based on a random policy $\pi$ in which $\pi(a_h|s_h)$ is the distribution for taking action $a_h$ when 
observing state $s_h$. We assume that policies are parametrized by a vector $\theta \in \RBB^{d}$ 
and use $\pi_{\theta}$ as a shorthand for the conditional distribution $\pi(a_h|s_h; \theta)$ 
associated to $\theta$. For a given time horizon $H$ we define the trajectory $\tau :=(s_1, a_1, 
\ldots, s_{H}, a_{H})$ as the collection of state-action pairs experienced up until time $h=H$. 
Given the initial distribution $\rho(s_0)$, the transition kernel $\PM(s_{h+1}|s_h, a_h)$, and the 
Markov property of the system, it follows that the probability distribution over trajectories $\tau$ is
\vspace{-2.5mm}
\begin{equation} \label{eqn_distributioin_of_trajectory}
   p(\tau;\pi_\theta) 
      \defi \rho(s_0)\, \prod_{h=1}^{H}\, \PM(s_{h+1}|s_h,a_h)\,\pi(a_h|s_h).\vspace{-2.5mm}
\end{equation}
Associated with a state action pair we have a reward function $r(s_h, a_h)$. When following a trajectory $\tau=(s_1, a_1, \ldots, s_{H}, a_{H})$, we consider the accumulated reward discounted by a geometric factor $\gamma$ 
$	\RM(\tau) = \sum_{h=1}^{H} \gamma^h r(s_h, a_h)$.
Our goal in reinforcement learning is to find the policy parameter $\theta$ that maximizes the expected reward
\vspace{-4mm}
\begin{equation}	\label{eqn_cost_to_optimize}
	\max J(\theta) \defi \EBB_{\tau\sim p(\tau; \pi_\theta)}[ \RM(\tau)] = \int \RM(\tau) p(\tau;\pi_\theta) \dB \tau.\vspace{-2mm}
\end{equation}
Here, the underlying distribution $p$ depends on the variable $\theta$ and therefore this problem 
can be considered as an instance of the non-oblivious formulation 
in~\eqref{eqn_DR_submodular_maximization_loss}. 

To find an $\epsilon$-first order stationary point of problem \eqref{eqn_cost_to_optimize}, the trajectory complexities of classic SGD-based methods like REINFORCE are $\OM(1/\epsilon^4)$ \cite{sutton2018reinforcement}.
While a direct application of the recent variance reduced gradient estimation leads to a biased gradient 
estimator of $\JM(\cdot)$, due to the inherent difficulty of the non-oblivious optimization of 
\eqref{eqn_cost_to_optimize},
a recent work by \cite{shen2019hessian} proposed a policy Hessian method to aid the estimation of 
the policy gradient.
They improve the trajectory complexity from $\OM(1/\epsilon^4)$ to $\OM(1/\epsilon^3)$ in the 
unconstrained setting, i.e., $\CM = \RBB^d$.
In this paper, we show how to find an $\epsilon$-first order stationary point for the more general 
constrained problem \eqref{eqn_cost_to_optimize} with the same improved trajectory complexity in a 
projection free manner.
\blue{Moreover, we emphasize that the RL problem \eqref{eqn_cost_to_optimize} is strictly a special case of the more general objective \eqref{eqn_DR_submodular_maximization_loss}: The reward function $\RM(\tau)$ in \eqref{eqn_cost_to_optimize} depends only on the random variable $\tau$ and is independent of the variable $\theta$; In \eqref{eqn_DR_submodular_maximization_loss}, the function $\tilde{F}(\xB; \zB)$ depends on both the variable $\xB$ and the random variable $\zB$, which is hence more general.
}

\subsection{Related Work} \label{section_related_work}

\begin{table}[t]
	\scriptsize
\begin{center}
\begin{tabular}{|c|c|c|c|c|c|c|}
\hline
\textbf{Ref.}& \textbf{setting}  & \textbf{assumptions}& \textbf{batch} & \textbf{rate/iter} &\textbf{complexity}  & \textbf{non-obl.}\\
\hline
\cite{DBLP:conf/icml/Jaggi13} & det.& smooth & --- &$\mathcal{O}(1/{t})$ & ---  & \ding{55}\\ 
\hline
\cite{DBLP:conf/icml/HazanK12} & stoch.    &smooth, bounded grad. & $\mathcal{O} (t)$ &$\mathcal{O}(1/{t^{1/2}}) $ &$\mathcal{O}(1/ \epsilon^{4}) $ & \ding{55}\\
\hline
\cite{hazan2016variance} & stoch.    &smooth, bounded grad. & $\mathcal{O} (t^2)$ &$\mathcal{O}(1/{t}) $ &$\mathcal{O}(1/ \epsilon^{3}) $ & \ding{55}\\
\hline
\cite{mokhtari2018stochastic} & stoch.  & smooth, bounded var.& $\mathcal{O} (1)$ &$O(1/{t^{1/3}}) $&$\mathcal{O}(1/ \epsilon^{3}) $ & \ding{55}\\
\hline\hline
This paper & stoch.  & smooth, bounded var.& {{$\mathcal{O}(1/\epsilon)$}}  & {{$\mathcal{O}(1/\epsilon)$}}  &$\mathcal{O}(1/ \epsilon^{2}) $ & \checkmark\\
\hline
\end{tabular}
\caption{Convergence guarantees of conditional gradient (FW) methods for convex minimization}
\vspace{-8mm}
\end{center}
\end{table}

Our work on the conditional gradient method in the non-oblivious stochastic setting has consequences for convex, continuous submodular, and non-convex cases. In the following, we 
review 
some of the most relevant work and our results with respect to them. We would like to emphasize that all the other related work only provide guarantees for the oblivious setting.  

\vspace{-1mm}
\medskip\noindent{\bf Convex minimization.\ }The problem of minimizing a stochastic convex 
function 
subject to a convex constraint using stochastic projected gradient descent-type methods has been 
studied extensively in the past 
\cite{robbins1951stochastic,nemirovski1978cezari,nemirovskii1983problem}. Although 
stochastic 
gradient computation is inexpensive, the cost of projection step can be prohibitive 
\cite{fujishige2011submodular} or intractable \cite{collins2008exponentiated}. In such cases, 
the 
projection-free methods, a.k.a., Frank-Wolfe or conditional gradient, are the method of choice 
\cite{frank1956algorithm,DBLP:conf/icml/Jaggi13}. In the stochastic setting, the online 
Frank-Wolfe 
algorithm proposed in \cite{DBLP:conf/icml/HazanK12} requires $\mathcal{O}(1/\epsilon^4)$ 
stochastic 
gradient evaluations to reach an $\epsilon$-approximate optimum, i.e., ${F(\xB)-OPT}\leq 
\epsilon$, 
under the assumption that the objective function is convex and has bounded gradients. The 
stochastic 
variant of Frank-Wolfe studied in \cite{hazan2016variance}, uses an increasing batch size 
of 
$b=\mathcal{O}(t^2)$ (at iteration $t$) to obtain an improved stochastic oracle complexity of 
$\mathcal{O}(1/\epsilon^3)$ under the assumptions that the expected objective function is smooth  
and Lipschitz continuous. Recently, \cite{mokhtari2018stochastic} proposed a momentum gradient 
estimator which achieves a similar $\mathcal{O}(1/\epsilon^3)$ stochastic gradient 
evaluations while fixing the batch-size to~1. \cite{lan2016conditional} proposed a stochastic 
conditional gradient sliding method which finds an $\epsilon$-approximate solution after 
$\mathcal{O}(1/\epsilon^2)$ stochastic gradient evaluations and $\mathcal{O}(1/\epsilon)$ calls to 
a 
linear minimization oracle. The main idea in gradient sliding algorithms is to simulate projected 
gradient 
descent step by solving a sequence of properly chosen linear minimization problems 
\cite{lan2016conditional,lan2016gradient,lan2017conditional,braun2017lazifying}. Our proposed 
method \FWPP also requires $\mathcal{O}(1/\epsilon^2)$ calls to a stochastic 
gradient 
oracle (for oblivious and non-oblivious settings) and 
$\mathcal{O}(1/\epsilon)$ calls to a linear minimization oracle. However, unlike gradient sliding, we 
do 
not resort in simulating the projection step and more closely follow the recipe of the  
Frank-Wolfe 
method. In this sense, \FWPP might be considered the first variant of Frank-Wolfe 
that 
achieves the optimum convergence rate in the convex setting.

\vspace{-1mm}
\medskip\noindent{\textbf{Submodular maximization.}}
Submodular set functions \cite{nemhauser1978analysis}  capture the intuitive  notion of diminishing returns and
have become increasingly  important in various fields. 
The celebrated result of  \cite{nemhauser1978analysis} shows that for a monotone submodular function and subject to a  cardinality constraint, a simple greedy algorithm achieves the tight $(1-1/e)$ approximation 
guarantee. 
However, the vanilla greedy method does not provide the tightest guarantees for many classes of 
feasibility constraints. To circumvent this issue, the continuous relaxation of submodular functions, 
through the multilinear extension, have been extensively studied \cite{vondrak2008optimal, 
calinescu2011maximizing, chekuri2014submodular, feldman2011unified, gharan2011submodular, 
sviridenko2017optimal}. In particular, it is known that the continuous greedy algorithm achieves the 
tight $(1-1/e)$ approximation guarantee for monotone submodular functions under a general 
matroid constraint \cite{calinescu2011maximizing}. 
In the non-monotone setting, a slight variant of continuous greedy, called measured continuous greedy, achieves $1/e$ approximation 
guarantee \cite{feldman2011unified}.
\blue{In the absence of constraints, two recent work \cite{Niazadeh2020online,roughgarden2018optimal} are able to achieve the tight $1/2$ approximation guarantee for the online unconstrained non-monotone submodular maximization problem by exploiting the offline bi-greedy algorithm proposed in  \cite{buchbinder2018deterministic}.}
The continuous relaxation of submodular functions have also been used to robustify submodular optimization in the stochastic settings \cite{karimi17,hassani2017gradient,mokhtari2018conditional}.

\begin{table}[t]
	\footnotesize
\begin{center}
\begin{tabular}{| c| c| c| c| c| c| }
\hline
\textbf{Ref.}& \textbf{setting} & \textbf{function}  & \textbf{const.} & \textbf{utility} & \textbf{complexity}  \\
\hline 
 \cite{chekuri2015multiplicative} & det.& mon.smooth sub. & poly. & $(1-1/e) \rm{OPT}-\epsilon $ & $O(1/{\epsilon^{2}})$\\
\hline
\cite{bian2017guaranteed} & det. &  mon. DR-sub. &cvx-down & $(1-1/e) \rm{OPT}-\epsilon $ & $O(1/{\epsilon}) $\\
\hline
\cite{DBLP:conf/nips/BianL0B17} & det. &  non-mon. DR-sub. &cvx-down & $(1/{e}) \rm{OPT}-\epsilon $ & $O(1/{\epsilon})$\\
\hline
\cite{hassani2017gradient}  & det. &mon. DR-sub. & convex &$(1/2)\rm{OPT}-\epsilon$& $O(1/{\epsilon})$ \\
\hline
\cite{hassani2017gradient} & stoch. &mon. DR-sub. & convex &$(1/2)\rm{OPT}-\epsilon$& $O(1/{\epsilon^{2}})$ \\
\hline
\cite{mokhtari2018stochastic} & stoch. &mon. DR-sub. & convex &$(1-1/e)\rm{OPT}-\epsilon$& $O(1/{\epsilon^{3}})$ \\
\hline
\cite{mokhtari2018stochastic}  & stoch. &non-mon. DR-sub. & convex &$(1/e)\rm{OPT}-\epsilon$& $O(1/{\epsilon^{3}})$ \\
\hline
\hline
This paper & stoch. &mon. DR-sub. & convex &$(1-1/e)\rm{OPT}-\epsilon$& 
{\color{black}{$\mathcal{O}(1/\epsilon^2)$}} \\
\hline
This paper & stoch. &non-mon. DR-sub. & convex &$(1/e)\rm{OPT}-\epsilon$& {\color{black}{$\mathcal{O}(1/\epsilon^2)$}}  \\
\hline
\end{tabular}
\caption{Convergence guarantees for continuous DR-submodular function maximization}
\vspace{-8mm}
\end{center}
\end{table}

%

Continuous DR-submodular functions, an important subclass of non-convex functions, generalize 
the 
notion of diminishing returns to the continuous domains  
\cite{wolsey1982analysis, bach2015submodular}. 
%
It has been recently shown that monotone continuous 
DR-submodular functions  can be (approximately) maximized over convex bodies using first-order methods \cite{bian2017guaranteed, 
hassani2017gradient, mokhtari2018conditional}. When exact gradient information is available, 
\cite{bian2017guaranteed} showed 
that the continuous greedy algorithm, which itself is a variant of the conditional gradient method, 
achieves 
$[(1-1/e)\text{OPT}-\epsilon]$   with $O(1/\epsilon)$ gradient evaluations. However, the problem 
becomes considerably more challenging when we only have access to a \emph{stochastic} 
first-order 
oracle. In particular, \cite{hassani2017gradient} showed that the 
stochastic gradient ascent  achieves  $[(1/2)\OPT-\epsilon]$ by using 
$O(1/\epsilon^2)$ stochastic gradients. In contrast, \cite{mokhtari2018conditional} proposed the 
stochastic variant of the continuous greedy algorithm that achieves $[(1-1/e)\text{OPT}-\epsilon]$ 
by 
using $O(1/\epsilon^3)$  stochastic gradients. In this paper, we show that \SCGPP achieves 
$[(1-1/e)\text{OPT}-\epsilon]$ by  $O(1/\epsilon^2)$ stochastic gradient evaluations. We also 
show that 
 the convergence rate of $O(1/\epsilon^2)$ is optimal.  We further generalize our result to the 
 non-monotone DR-continuous submodular setting, by proposing the  stochastic variant of the 
 measured continuous greedy \cite{feldman2011unified}. 
 Specifically, \NMSCGPP  achieves a 
 $[(1/e)\OPT 
 -\epsilon]$ solution  by using $\OM({1}/{\epsilon^2})$ stochastic gradients. Note that for the non-monotone DR-submodular maximization (in contrast to the monotone case), one needs the extra assumption that the set $\CM$ is down-closed, or otherwise no constant factor approximation in polynomial time is possible.   
%
%
%

\begin{table}[t]
	\scriptsize
	\begin{center}
		\begin{tabular}{|c|c|c|c|c|c|c|}
			\hline
			\textbf{Ref.}& \textbf{setting}  & \textbf{assumptions}& \textbf{batch} & \textbf{\#iter} &\textbf{complexity}  & \textbf{non-obl.}\\
			\hline
			\cite{lacoste2016convergence} & det.& smooth & --- &$\mathcal{O}(1/{{\epsilon}^{2}})$ & ---  & \ding{55}	\\ 
			\hline
			\cite{hazan2016variance} & stoch.    &smooth, bounded var. & $\mathcal{O} (1/\epsilon^2)$ &$\mathcal{O}(1/{{\epsilon}^{2}}) $ &$\mathcal{O}(1/ \epsilon^{4}) $ & \ding{55}\\
			\hline
			\cite{hazan2016variance} & stoch.    &smooth, bounded var. & $\mathcal{O} (1/\epsilon^{4/3})$ &$\mathcal{O}(1/\epsilon^2) $ &$\mathcal{O}(1/ \epsilon^{10/3}) $ & \ding{55}\\
			\hline
			\blue{\cite{shen2019complexities,yurtsever2019conditional}} & stoch.  & smooth, bounded var.& {{$\mathcal{O}(1/\epsilon)$}}  & {{$\mathcal{O}(1/\epsilon^2)$}}  &$\mathcal{O}(1/ \epsilon^{3}) $ & \ding{55}\\
			\hline
			\cite{zhang2020one} & stoch.  & smooth, bounded var.& {{$\mathcal{O}(1)$}}  & {{$\mathcal{O}(1/\epsilon^2)$}}  &$\mathcal{O}(1/ \epsilon^{3}) $ & \checkmark\\
			\hline			
			\hline
			This paper & stoch.  & smooth, bounded var.& {{$\mathcal{O}(1/\epsilon)$}}  & {{$\mathcal{O}(1/\epsilon^2)$}}  &$\mathcal{O}(1/ \epsilon^{3}) $ & \checkmark	\\
			\hline
		\end{tabular}
		\caption{\blue{Convergence guarantees of conditional gradient (FW) methods for non-convex minimization}}
		\vspace{-6mm}
	\end{center}
\end{table}

\vspace{-1mm}
\medskip\noindent{\bf Nonconvex minimization.\ } The focus of this paper is on constrained 
optimization in the non-oblivious  stochastic setting. Nevertheless, convergence to  first-order 
stationary points (FOSP) for non-convex functions has been widely studied in the unconstrained case for oblivious problems 
\cite{DBLP:conf/cdc/ReddiSPS16,reddi2016stochastic,DBLP:conf/icml/ZhuH16,
	DBLP:conf/nips/LeiJCJ17}. Recently, 
the finite-time analysis for convergence to an FOSP of 
\textit{constrained} problems has also 
received a lot of attention. In particular, in the deterministic setting, 
\cite{lacoste2016convergence} 
showed that the sequence of iterates generated by the update of Frank-Wolfe converges 
to an 
$\epsilon$-FOSP after $\mathcal{O}(1/\epsilon^{2})$ iterations. In contrast, 
\cite{ghadimi2016mini} considered the norm of gradient mapping as a measure of non-stationarity 
and 
showed that the projected gradient method has the same complexity of 
$\mathcal{O}(1/\epsilon^{2})$. 
Similar results for the accelerated projected gradient method was shown in 
\cite{ghadimi2016accelerated}. Note that $\epsilon$-FOSP and the norm of gradient mapping are 
not directly related to one another. Adaptive cubic regularization methods in
\cite{cartis2012adaptive,cartis2013evaluation,cartis2015evaluation} improved these results 
by using second-order information in order to obtain an $\epsilon$-FOSP after 
$\mathcal{O}(1/\epsilon^{3/2})$ iterations. Later, \cite{mokhtari2018escaping} showed that 
projected gradient descent reaches an $\epsilon$-FOSP after $\mathcal{O}(1/\epsilon^2)$ 
iterations in deterministic setting in terms of the Frank-Wolfe gap. In the oblivious 
stochastic
setting, \cite{reddi2016stochastic} introduced a stochastic variant of  Frank-Wolfe which finds an 
$\epsilon$-FOSP after $\mathcal{O}(1/\epsilon^4)$ stochastic gradient evaluations and 
$\mathcal{O}(1/\epsilon^2)$ calls to a linear minimization oracle. \cite{DBLP:conf/icml/QuLX18} 
introduced a variant of the gradient sliding method that finds an $\epsilon$-FOSP after 
$\mathcal{O}(1/\epsilon^2)$ stochastic gradient evaluations and $\mathcal{O}(1/\epsilon^2)$ calls 
to a linear minimization oracle when we measure first-order optimality in terms of proximal gradient 
mapping. Again, this result can not be compared with ours as we measure first-order 
optimality based on the the Frank-Wolfe gap. 

\vspace{-1mm}
\medskip\noindent{\bf Concurrent Work.\ }  In this part, we briefly discuss some  recent results from 
concurrent  works that appeared after we made the first version of this paper publicly available on 
arXiv. In particular, \cite{shen2019complexities, yurtsever2019conditional} considered the 
oblivious stochastic setting (a special case of 
problem~\eqref{eqn_DR_submodular_maximization_loss}) and introduced a variance reduced 
version of 
Frank-Wolfe 
method based on the Stochastic Path-Integrated Differential Estimator (SPIDER) approach 
\cite{fang2018spider}.  Their proposed methods find an $\epsilon$-FOSP in 
non-convex minimization after $\mathcal{O}(1/\epsilon^3)$ stochastic gradient evaluations and 
$\mathcal{O}(1/\epsilon^2)$ calls to a linear minimization oracle. \cite{yurtsever2019conditional} 
also noted that in the oblivious stochastic  convex minimization, the same method achieves an 
$\epsilon$-approximate 
solution after $\mathcal{O}(1/\epsilon^2)$ stochastic gradient evaluations and 
$\mathcal{O}(1/\epsilon)$ calls to a linear minimization oracle. Finally, \cite{zhang2019quantized} 
proposed a quantized Frank-Wolfe algorithm, by relying on  SPIDER, to develop a 
communication-efficient distributed 
method.

\section{Preliminaries} \label{section_preliminaries}
In this section, we state some of the required definitions and then review variance reduced methods for stochastic optimization. 

\begin{definition} \label{def:lips_grad}
A function $\phi: \RBB^n\to\RBB$ is $L$-smooth if it has $L$-Lipschitz continuous gradients on $\RBB^n$, i.e., for any $\xB, \hat{\xB} \in \RBB^n$, we have
$
|| \nabla \phi(\xB) - \nabla \phi(\hat{\xB}) || \leq L ||\xB - \hat{\xB}||
$.
\end{definition}

%

\begin{definition}  \label{def:strong_convex}
A function $\phi: \RBB^n\to\RBB$ is convex on $\mathbb{R}^n$ if we have
$
\phi(\hat{\xB}) \geq \phi(\xB) + \nabla \phi(\xB) ^T (\hat{\xB} - \xB)
$ for any $\xB, \hat{\xB} \in \RBB^n$. Further, $\phi(\xB)$ is concave if $-\phi(\xB)$ is convex.
\end{definition}

\medskip\noindent{\bf Submodularity.} 
A set function $f:2^V\rightarrow \RBB_+$, defined on the ground set $V$,  is submodular if $f(A)+f(B)\geq f(A\cap B) + f(A\cup B),$ for all subsets $A,B\subseteq V$. Even though submodularity is mostly considered on discrete domains, the notion can
be naturally extended to arbitrary lattices \cite{fujishige91}. To this aim, let us consider a subset of $\RBB_+^d$ of the form $\XM = \prod_{i=1}^d \XM_i$ where each $\XM_i$ is a compact subset of $\RBB_+$. A function $F:\XM\rightarrow \RBB_+$ is  \textit{continuous submodular} if for all $(\xB,\yB)\in \XM \times \XM$, we have 
$F(\xB)+ F(\yB) \geq F(\xB\vee \yB) + F(x\wedge \yB)$, 
where $\xB\vee \yB \doteq\max(\xB,\yB)$ (component-wise) and $\xB\wedge \yB \doteq \min (\xB,\yB)$ (component-wise). A submodular function is monotone if for any $\xB,\yB\in\XM$ such that $\xB\leq \yB$, we have $F(\xB)\leq F(\yB)$ (here, by $\xB \leq \yB$ we mean that every coordinate of $\xB$ is less than the corresponding coordinate of $\yB$). 
When  twice differentiable, $F$ is submodular if and only if all cross-second-derivatives are non-positive \cite{bach2015submodular}, i.e., 
$\forall i\neq j, \forall \xB\in \XM, ~~ \frac{\partial^2 F(\xB)}{\partial x_i \partial x_j} \leq 0$.
This expression makes it clear that continuous submodular functions are not convex nor concave in general, as concavity (convexity) implies $\nabla^2 F\preceq 0$ ($\nabla^2 F \succeq 0$). 
A proper subclass of submodular functions are called \textit{DR-submodular} \cite{bian2017guaranteed, soma2015generalization} if for all $\xB,\yB\in\XM$ such that $\xB\leq \yB$  and any standard basis vector $\eB_i\in\RBB^n$ and a non-negative number $z\in\RBB_+$ such that $z\eB_i+\xB\in\XM$ and $z\eB_i+\yB\in\XM$, then,
$F(z\eB_i+\xB)-F(\xB)\geq F(z\eB_i+\yB)-F(\yB). $
One can easily verify that for a differentiable DR-submodular function the gradient is an antitone mapping, i.e., for all $\xB,\yB \in \XM$ such that $\xB\leq \yB$ we have  $\nabla F(\xB) \geq \nabla F(\yB)$ \cite{bian2017guaranteed}. A crucial example of a DR-submodular function is the multilinear extension \cite{calinescu2011maximizing} that we study in Section~\ref{discrete}.

\medskip\noindent{\bf Variance Reduction.\ }  
Beyond the vanilla stochastic gradient, variance reduced algorithms \cite{schmidt2017minimizing,johnson2013accelerating,defazio2014saga,reddi2016stochastic,nguyen2017sarah,allen2018natasha} have been successful in reducing  stochastic first-order oracle complexity in the \emph{oblivious} stochastic optimization
\vspace{-1mm}
\begin{equation}\label{eqn_oblivious_loss}
	\max_{\xB \in \CM} F(\xB) \ :=\ \max_{\xB \in \CM}\ \EBB_{\zB\sim p(\zB)} [\tilde F(\xB; \zB)],
	\vspace{-1mm}
\end{equation}
where each function $\tilde F(\cdot; \zB)$ is \emph{$L$-smooth}.
In contrast to (\ref{eqn_DR_submodular_maximization_loss}), the underlying distribution $p$ of (\ref{eqn_oblivious_loss}) is invariant to the variable $\xB$ and is hence called oblivious. 
We will now explain a recent variance reduction technique for solving \eqref{eqn_oblivious_loss} using stochastic gradient information. 
Consider the following \emph{unbiased} estimate of the gradient at $\xB^{t}$:
\vspace{-1mm}
\begin{equation}\label{eqn_VR_estimator}
\gB^{t} \defi {\gB}^{t-1} + \nabla \tilde F(\xB^{t};\MM) - \nabla \tilde F({\xB}^{t-1};\MM),\vspace{-1mm}
\end{equation}
where $\nabla \tilde F(\yB;\MM) \defi \frac{1}{|\MM|} \sum_{\zB\in\MM} \nabla \tilde F(\yB; \zB)$ for some $\yB \in \RBB^{d}$, $\gB^{t-1}$ is an unbiased gradient estimator at $\xB^{t-1}$, and $\MM$ is a mini-batch of random samples drawn from $p(\zB)$.
\cite{fang2018spider} showed that, with the gradient estimator (\ref{eqn_VR_estimator}), $\OM({1}/{\epsilon^3})$ stochastic gradient evaluations are sufficient to find an $\epsilon$-first-order stationary point of Problem (\ref{eqn_oblivious_loss}), improving upon the $\OM({1}/{\epsilon^4})$ complexity of SGD. A crucial property leading to the success of the variance reduction method given in \eqref{eqn_VR_estimator} is that $\nabla \tilde F(\xB^{t};\MM)$ and $\nabla \tilde F({\xB}^{t-1};\MM)$ use \emph{the same} minibatch sample $\MM$ in order to exploit the $L$-smoothness of component functions $\tilde F(\cdot;\zB)$.
Such construction is only possible in the oblivious setting where $p(\zB)$ is independent of the choice of $\xB$, and would introduce bias in the more general non-oblivious case (\ref{eqn_DR_submodular_maximization_loss}): To see this point, let $\MM$ be the minibatch of random variable $\zB$ sampled according to distribution $p(\zB;\xB^{t})$.
We have $\EBB[ \nabla \tilde F(\xB^{t};\MM)] = \nabla F(\xB^{t})$ but $\EBB[ \nabla \tilde F({\xB}^{t-1};\MM)] \neq \nabla F({\xB}^{t-1})$ since the distribution $p(\zB;\xB^{t-1})$ is not the same as $p(\zB;\xB^{t})$.
The same argument renders all the existing variance reduction techniques inapplicable to the non-oblivious setting of Problem (\ref{eqn_DR_submodular_maximization_loss}).

\section{Stochastic (Non-)Convex Minimization}\label{sec:convex}

In this section, we focus on two specific cases of Problem \eqref{eqn_DR_submodular_maximization_loss} where the objective function $F$ is a concave function or a general nonconcave function. As stated earlier, maximizing a (non-)concave function can be written as minimizing a (non-)convex function. We hence rewrite \eqref{eqn_DR_submodular_maximization_loss} as 
\begingroup
\setlength\abovedisplayskip{5pt}
\begin{equation}\label{eqn_convex_minimization_loss}
	\min_{\xB \in \CM} F(\xB) \ :=\ \min_{\xB \in \CM}\ \EBB_{\zB\sim p(\zB; \xB)} [\tilde{F}(\xB; \zB)],\vspace{-1mm}
\end{equation} 
\endgroup
where we assume that the expected function $F$ is either convex or a general nonconvex function. In this section we study a non-oblivious stochastic optimization problem, but our results trivially hold for the oblivious stochastic problem as a special case. 

\subsection{Stochastic Frank-Wolfe++}\label{sec:sfw}
Now we introduce the {\texttt{Stochastic} \texttt{Frank} \texttt{-Wolfe}{++}\xspace} method (\FWPP) to solve the non-oblivious minimization problem~\eqref{eqn_convex_minimization_loss}. Recall that the Frank-Wolfe method requires access to the gradient of the objective function $\nabla F$. However, evaluation of $\nabla F$ may not be possible in many settings as either the probability distribution $p$ is not available or evaluating the expectation in \eqref{eqn_convex_minimization_loss} is computationally prohibitive. Our goal is to design an unbiased estimator $\gB$ for approximating the exact gradient $\nabla F$ that is computationally affordable and has a low variance. Once the gradient approximation $\gB^t$ for step $t$ is evaluated we can find the descent direction $\vB^t$ by solving the linear optimization program 
\begingroup
\setlength\abovedisplayskip{5pt}
\begin{equation}
\vB^t := \argmin_{\vB\in\CM}\{\vB^\top\gB^t\},
\end{equation}
\endgroup
and then compute the updated variable $\xB^{t+1}$ by performing the following update 
\begingroup
\setlength\abovedisplayskip{5pt}
\begin{equation} \label{eqn_update_fw}
\xB^{t+1} := \xB^t + \eta_{t}\ (\vB^t - \xB^t),\vspace{-1mm}
\end{equation}
\endgroup
where $\eta_t$ is a properly chosen stepsize. The steps of the \FWPP are summarized in Algorithm~\ref{algo_FW++}. In \FWPP we restart the gradient estimation $\gB$ after an episode of iterates with a certain length. This step is necessary to ensure that the noise of gradient approximation stays bounded by a proper constant. The details for computing the gradient approximation $\gB^t$ is provided in the following section. 
\begin{algorithm}[t]
	\caption{\sfw(\FWPP)}
	\label{algo_FW++}
	\begin{algorithmic}[1]
		\REQUIRE Number of iteration $T$, minibatch size $|\MM_0^t|$ and $|\MM_h^t|$, step size $\eta_{t}$
		\FOR{$t = 0$ \TO $T$}
		\IF{$\mod(t, q) = 0$ (non-concave) or $\log_2 t \in \ZBB$ (concave)}
		\STATE Sample $\MM^t_0$ of $\zB$ with distribution $p(\zB; \xB^t)$ to find $
		\gB^t \defi \nabla F(\xB^t; \MM_0^t)$;
		\ELSE
		\STATE Sample a minibatch $\MM_h^t$ of $(a, \zB(a))$ to calculate $\tilde{\nabla}_{t}^2$ using (\ref{eqn_integral_estimator}) 
		\STATE Compute $\gB^t := \gB^{t-1} + \tilde{\nabla}_{t}^2(\xB^{t} - \xB^{t-1})$; \qquad$\diamond$ Or use \eqref{eqn_approximate_matrix_vector_product} instead.
		\ENDIF
		\STATE $\vB^t := \argmax_{\vB\in\CM}\{\vB^\top\gB^t\}$;
		\STATE $\xB^{t+1} := \xB^t + \eta_{t}\cdot(\vB^t - \xB^t)$;
		\ENDFOR
		\ENSURE $\xB^{\bar{t}}$ with $\bar{t}$ uniformly sampled from $[T]$ (non-concave case); $\xB^{T}$ (concave case).
	\end{algorithmic}
\end{algorithm}
\subsubsection{Stochastic gradient approximation} \label{section_stochastic_gradient_approximation}
Given a sequence of iterates, $\{\xB^{s} \}_{s=0}^t$, the gradient of $F$ at $\xB^{t}$ can be written in a path-integral form as follows
\begingroup
\setlength\abovedisplayskip{5pt}
\begin{equation}
	\nabla F(\xB^{t}) = \nabla F(\xB^{0}) + \sum_{s=1}^{t} \left\{\Delta^s \defi \nabla F(\xB^{s}) - \nabla F(\xB^{s-1})\right\}.\vspace{-1mm}
\end{equation}
\endgroup
By obtaining an unbiased estimate of $\Delta^{t} = \nabla F(\xB^{t}) - \nabla F(\xB^{t-1})$, and reusing the previous unbiased estimates for $s<t$, we  obtain recursively an unbiased estimator of $\nabla F(\xB^{t})$ which has a reduced variance.
Estimating $\nabla F(\xB^{s})$ and $\nabla F(\xB^{s-1})$ separately as suggested in  (\ref{eqn_VR_estimator}) would cause the bias issue in the the non-oblivious case (see the discussion at the end of section \ref{section_preliminaries}). Therefore, we propose an approach for \emph{directly estimating the difference} $\Delta^t = \nabla F(\xB^{t}) - \nabla F({\xB}^{t-1})$ in an unbiased manner.

We construct an unbiased estimator $\gB^t$ of the gradient $\nabla F(\xB^t)$ by adding an unbiased estimate $\tilde{\Delta}^t$ of the gradient difference $\Delta^t = \nabla F(\xB^{t}) - \nabla F(\xB^{t-1})$ to $\gB^{t-1}$, where $\gB^{t-1}$ is an unbiased estimator of $\nabla F(\xB^{t-1})$. Note that $\Delta^t$ can be written as 
\begin{equation}
	\Delta^t = \int_{0}^{1} \nabla^2 F(\xB(a))(\xB^{t} - \xB^{t-1})\dB a  = \left[\int_{0}^{1} \nabla^2 F(\xB(a))\dB a \right] (\xB^{t} - \xB^{t-1}),
	\label{eqn_integral}
\end{equation}
where $\xB(a) \defi a \cdot\xB^{t}+(1-a)\cdot \xB^{t-1}$ for $a \in[0, 1]$.
Hence, if we sample the parameter ${a}$ uniformly at random from the interval $[0, 1]$, it can be easily verified that $\tilde{\Delta}^t:=\nabla^2 F(\xB({a})) (\xB^{t} - \xB^{t-1})$ is an unbiased estimator of the gradient difference ${\Delta}^t$ since
\begin{equation}
\EBB_{{a}} [\nabla^2 F(\xB({a})) (\xB^{t} - \xB^{t-1})] = \nabla F(\xB^{t}) - \nabla F(\xB^{t-1}).
\label{eqn_integral_2}
\end{equation}
Hence, all we need is an unbiased estimator of the Hessian-vector product $\nabla^2 F(\yB) (\xB^{t} - \xB^{t-1})$ for the non-oblivious objective $F$ at an arbitrary $\yB\in\CM$. Next, we present an unbiased estimator of $\nabla^2 F(\yB)$ for any $\yB\in\CM$ that can be evaluated efficiently.


\begin{lemma} \label{lemma_hessian_estimator}
	Consider $\yB \in \CM$, and $\zB$ with distribution $p(\zB;\yB)$ and define
	\vspace{-1mm}
	\begin{equation}\label{eqn_unbiased_second_order_differential_estimator}
		\begin{aligned}
			\tilde{\nabla}^2 F(\yB; \zB) \defi &\tilde{F}(\yB;\zB)[\nabla\log p(\zB;\yB)] [\nabla\log p(\zB;\yB)]^\top + [\nabla \tilde{F}(\xB;\zB)][\nabla\log p(\zB;\yB)]^\top \\
			& + [\nabla\log p(\zB;\yB)][\nabla \tilde{F}(\yB;\zB)]^\top + \nabla^2 \tilde{F}(\yB;\zB) + \tilde{F}(\yB; \zB)\nabla^2\log p(\zB;\yB).
		\end{aligned}
		\vspace{-1mm}
	\end{equation}
	Then, $\tilde{\nabla}^2 F(\yB; \zB)$ is an unbiased estimator of $\nabla^2 F(\yB)$.
\end{lemma}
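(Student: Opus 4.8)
The plan is to establish $\EBB_{\zB\sim p(\zB;\yB)}\big[\tilde{\nabla}^2 F(\yB;\zB)\big]=\nabla^2 F(\yB)$ by differentiating the defining integral $F(\yB)=\int \tilde F(\yB;\zB)\,p(\zB;\yB)\,\dB\zB$ twice with respect to $\yB$, applying the score-function (log-derivative) identity $\nabla_\yB p(\zB;\yB)=p(\zB;\yB)\,\nabla_\yB\log p(\zB;\yB)$ after each differentiation, and then regrouping the resulting terms so that they match, one for one, the five terms appearing in \eqref{eqn_unbiased_second_order_differential_estimator}.

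First I would compute $\nabla F$: interchanging $\nabla_\yB$ with the integral and combining the product rule with the log-derivative identity gives
\[
\nabla F(\yB)=\int\big[\nabla\tilde F(\yB;\zB)+\tilde F(\yB;\zB)\,\nabla\log p(\zB;\yB)\big]\,p(\zB;\yB)\,\dB\zB=\EBB_{\zB\sim p(\zB;\yB)}\big[\uB(\yB,\zB)\big],
\]
where I abbreviate $\uB(\yB,\zB)\defi\nabla\tilde F(\yB;\zB)+\tilde F(\yB;\zB)\,\nabla\log p(\zB;\yB)$ (a column vector). Differentiating $\nabla F(\yB)=\int\uB(\yB,\zB)\,p(\zB;\yB)\,\dB\zB$ once more and again using the log-derivative identity yields
\[
\nabla^2 F(\yB)=\EBB_{\zB\sim p(\zB;\yB)}\Big[J_{\uB}(\yB,\zB)+\uB(\yB,\zB)\,[\nabla\log p(\zB;\yB)]^\top\Big],
\]
where $J_{\uB}$ denotes the Jacobian of $\uB$ in $\yB$. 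Expanding the Jacobian of the scalar-times-vector product inside $\uB$ gives $J_{\uB}(\yB,\zB)=\nabla^2\tilde F(\yB;\zB)+[\nabla\log p(\zB;\yB)][\nabla\tilde F(\yB;\zB)]^\top+\tilde F(\yB;\zB)\,\nabla^2\log p(\zB;\yB)$, while $\uB(\yB,\zB)[\nabla\log p(\zB;\yB)]^\top=[\nabla\tilde F(\yB;\zB)][\nabla\log p(\zB;\yB)]^\top+\tilde F(\yB;\zB)[\nabla\log p(\zB;\yB)][\nabla\log p(\zB;\yB)]^\top$. Summing these five matrices is precisely the integrand $\tilde{\nabla}^2 F(\yB;\zB)$ of \eqref{eqn_unbiased_second_order_differential_estimator} (where the $\xB$ in its second term should read $\yB$), which is the claimed identity.

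The main obstacle is not conceptual but one of regularity: I must justify the two interchanges of $\nabla_\yB$ with $\int\cdot\,\dB\zB$. This is where blanket smoothness/integrability hypotheses on the family $\{p(\cdot;\yB)\}$ and on $\tilde F$ enter — for example, that $\zB\mapsto p(\zB;\yB)$ and $\zB\mapsto\tilde F(\yB;\zB)$ are twice differentiable in $\yB$ with all the derivatives above dominated, on a neighborhood of $\yB$, by a $p(\cdot;\yB)$-integrable function, so that the parametric Leibniz rule applies; in the motivating examples (multilinear and soft-max extensions with fixed Bernoulli support, policy gradients with everywhere-positive policies) these conditions hold. The remainder is bookkeeping: one fixes the convention that $\nabla\tilde F$ and $\nabla\log p$ are column vectors and that $\nabla^2(\cdot)$ and Jacobians are matrices, so the transposes in \eqref{eqn_unbiased_second_order_differential_estimator} land in the right places, and one may sanity-check that the five-term sum is symmetric, as $\nabla^2 F$ must be.
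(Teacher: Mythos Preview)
Your proposal is correct and is exactly the natural derivation: differentiate $F(\yB)=\int\tilde F(\yB;\zB)\,p(\zB;\yB)\,\dB\zB$ twice under the integral sign, apply the score identity $\nabla p=p\,\nabla\log p$ at each step, and collect the five resulting terms. The paper does not include a written proof of this lemma (it is stated and used without derivation), so there is nothing to compare against; your argument is precisely what the reader is expected to fill in, including the observation that the $\xB$ in the second term of \eqref{eqn_unbiased_second_order_differential_estimator} is a typo for $\yB$.
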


The result in Lemma~\ref{lemma_hessian_estimator} shows how to evaluate an unbiased estimator of the Hessian $\nabla^2 F(\yB)$. 
If we consider $a$ as a random variable with a uniform distribution over the interval $[0, 1]$, then we can define the random variable $\zB(a)$ with the probability distribution $p(\zB(a); \xB(a))$ where $\xB(a) $ is defined as $\xB(a) := a\cdot\xB^{t}+(1-a)\cdot \xB^{t-1}$.
Considering these two random variables and the result of Lemma \ref{lemma_hessian_estimator}, we can construct an unbiased estimator of the integral $\int_{0}^{1} \nabla^2 F(\xB(a))\dB a$ in (\ref{eqn_integral}) by 
\vspace{-1mm}
\begin{equation}\label{eqn_integral_estimator}
	\tilde{\nabla}_{t}^2 \defi \frac{1}{|\MM|}\sum_{(a, \zB(a)) \in \MM} \tilde{\nabla}^2 F(\xB(a); \zB(a)),\vspace{-1mm}
\end{equation}
where $\MM$ is a minibatch containing $|\MM|$ samples of random tuple $(a, \zB(a))$. 
%
Once we construct $\tilde{\nabla}_{t}^2$, the gradient difference $\Delta^t$ can be approximated by 
\begin{equation} \label{eqn_gradient_difference_estimator}
	\tilde{\Delta}^t := \tilde{\nabla}_{t}^2 (\xB^{t} - \xB^{t-1}). 
\end{equation}
Note that for the general objective $F(\cdot)$, the matrix-vector product $\tilde{\nabla}_{t}^2(\xB^{t} - \xB^{t-1})$ requires $\OM(d^2)$ computation and memory. To resolve this issue, in Section~\ref{sec:hessian_product}, we provide an implementation of \eqref{eqn_gradient_difference_estimator} using only first-order information which reduces the computational and memory complexity to $\OM(d)$.
Using $\tilde{\Delta}^t$ as an unbiased estimator for the gradient difference $\Delta^t$, we can define our gradient estimator as
\vspace{-.2cm}
\begin{equation}\label{g_def_total}
\gB^{t} = \nabla \tilde{F}(\xB^0; \MM_0) + \sum_{i=1}^{t}\tilde{\Delta}^t. \vspace{-.2cm}
\end{equation}
Indeed, this update can also be rewritten in a recursive way as 
\vspace{-.1cm}
\begin{equation}\label{g_def_rec}
\gB^{t} =  \gB^{t-1}+\tilde{\Delta}^t,\vspace{-.1cm}
\end{equation}
once we set $\gB^{0}=\nabla \tilde{F}(\xB^0; \MM_0)$. Note that the proposed approach for gradient approximation in \eqref{g_def_total} has a variance reduction mechanism which leads to the optimal computational complexity of \FWPP $\,$ in terms of number of calls to the stochastic oracle. We further highlight this point in the convergence analysis of \FWPP.

\subsubsection{Implementation of the Hessian-Vector Product} \label{sec:hessian_product}

In this section, we focus on the computation of the gradient difference approximation $\tilde{\Delta}^t$ introduced in \eqref{eqn_gradient_difference_estimator}. We aim to come up with a scheme that avoids explicitly computing the matrix estimator $\tilde{\nabla}^{2}_t$ which has a complexity of $\OM(d^2)$, and present an approach that directly approximates $\tilde{\Delta}^t$ while only using the finite differences of gradients with a complexity of $\OM(d)$. Recall the definition of the Hessian approximation $\tilde{\nabla}^2_t$ in (\ref{eqn_integral_estimator}).
Computing $\tilde{\nabla}^2_t(\xB^{t} - \xB^{t-1})$ is equivalent to computing $|\MM|$ instances of $\tilde{\nabla}^2 F(\yB; \zB)(\xB^{t} - \xB^{t-1})$ for some $\yB \in \CM$ and $\zB \in \ZM$. Denote $\dB = \xB^{t} - \xB^{t-1}$ and use the expression in \eqref{eqn_unbiased_second_order_differential_estimator} to write
\begin{equation}
	\begin{aligned}
		\tilde{\nabla}^2 F(\yB; &\zB)\dB = \tilde{F}(\yB;\zB)[\nabla\log p(\zB;\yB)^\top\dB] \nabla\log p(\zB;\yB) + [\nabla\log p(\zB;\yB)^\top\dB]\nabla \tilde{F}(\xB;\zB) \\
		& + [\nabla \tilde{F}(\yB;\zB)^\top\dB][\nabla\log p(\zB;\yB)] + \nabla^2 \tilde{F}(\yB;\zB) \dB + \tilde{F}(\yB;\zB)\nabla^2\log p(\zB;\yB)\dB.
	\end{aligned}
	\label{eqn_hessian_vector_product}
\end{equation}
The first three terms can be computed\footnote{Also, note that pairs $(\xB,\yB)$ with $p(\xB,\yB) = 0$ will never be sampled (i.e. they are of measure zero) and hence do not cause any computational issues.}in time $\OM(d)$ and only the last two terms involve $\OM(d^2)$ operations, which can be approximated by the following finite gradient difference scheme.
For any twice differentiable function $\psi:\RBB^d \rightarrow \RBB$ and any $\dB \in \RBB^d$ with bounded norm $\|\dB\|\leq D$, we compute, for some small $\delta>0$,
\vspace{-1mm}
\begin{equation}\label{eqn_finite_gradient_difference}
	\phi(\delta; \psi)\defi \frac{\nabla \psi(\yB + \delta\cdot\dB) - \nabla \psi(\yB - \delta\cdot\dB)}{2\delta}\simeq \nabla^2 \psi(\yB) \dB.
	\vspace{-1mm}
\end{equation}
By considering the second-order smoothness of the function $\psi(\cdot)$ with constant $L_2$ we can show that for arbitrary $\xB, \yB \in \RBB^d$ it holds $\|\nabla^2 \psi(\xB) - \nabla^2 \psi(\yB)\|\leq L_2\|\xB -\yB\|$. Therefore, the error of the above approximation can be bounded by
\begin{equation}
\|\nabla^2 \psi(\yB) \dB - \phi(\delta; \psi)\| = \|\nabla^2 \psi(\yB) \dB - \nabla^2 \psi(\tilde{\xB}) \dB\| \leq D^2L_2\delta,
\label{eqn_error}
\end{equation}
where $\tilde{\xB}$ is obtained from the mean value theorem.
This quantity can be made arbitrary small by decreasing $\delta$ (up to the machine accuracy). 
By applying (\ref{eqn_finite_gradient_difference}) to functions $\psi(\yB) = \tilde{F}(\yB; \zB)$ and $\psi(\yB) = \log p(\zB;\yB)$, we can approximate (\ref{eqn_hessian_vector_product}) in $\OM(d)$: 
\begin{equation}
	\begin{aligned}
		\xi_\delta(\yB;\zB) \defi&\tilde{F}(\yB;\zB)[\nabla\log p(\zB;\yB)^\top\dB] \nabla\log p(\zB;\yB) + [\nabla\log p(\zB;\yB)^\top\dB]\nabla \tilde{F}(\xB;\zB) \\
		& + [\nabla \tilde{F}(\yB;\zB)^\top\dB][\nabla\log p(\zB;\yB)] + \phi(\delta; \tilde{F}(\yB;\zB)) + \phi(\delta; \log p(\zB;\yB)).
	\end{aligned}
\end{equation}
We further can define a minibatch version of this implementation as 
\begingroup
\setlength\abovedisplayskip{5pt}
\begin{equation}\label{eqn_approximate_matrix_vector_product}
	\xi_\delta(\xB;\MM) \defi \frac{1}{|\MM|}\sum_{(a, \zB(a))\in\MM} \xi_\delta(\xB(a);\zB(a)),\vspace{-1mm}
\end{equation}
\endgroup
which is used in Option II of Step 8 in Algorithm \ref{algo_FW++}.
Note that $\lim_{\delta\rightarrow 0} \xi_\delta(\xB;\MM) = \tilde{\Delta}^t$ and hence \eqref{eqn_gradient_difference_estimator} is a special case of \eqref{eqn_approximate_matrix_vector_product} by taking $\delta \rightarrow 0$.
Additionally, we show in later sections that setting $\delta = \OM(\epsilon^2)$ is sufficient, where $\epsilon$ is the target accuracy.
%

\subsection{Convergence Analysis of \FWPP: Nonconvex Setting}
In this section, we focus on solving Problem \eqref{eqn_convex_minimization_loss} when $F$ is smooth but nonconvex. In this case, our goal is to find an $\epsilon$-First-Order Stationary Point (FOSP), formally define as
\begingroup
\setlength\abovedisplayskip{5pt}
\begin{equation}\label{eqn_wolfe_gap_goal}
V_{\CM}(\xB; F) = \max_{\uB\in \CM} \langle \nabla F(\xB), \uB - \xB\rangle \leq \epsilon;\vspace{-1mm}
\end{equation}
\endgroup
where the parameter $V_{\CM}(\xB; F)$ captures distance to an FOSP and it is $0$ when $\xB$ is an FOSP. The parameter $V_{\CM}(\xB; F)$ is also known as Frank-Wolfe gap \cite{lacoste2016convergence}.

Before stating our main theorem for the general nonconvex case, we first formally state the required assumption for proving our results.

\begin{assumption}\label{ass_upper_bound_stochastic_function_value}
	The stochastic function $\tilde{F}(\xB;\zB)$ has bounded value for all $\zB \in \ZM$ and $\xB \in \CM$, i.e., $\exists B$ s.t. $\max_{\zB \in \ZM, \xB \in \CM} \tilde{F}(\xB;\zB) \leq B$.
\end{assumption}
\begin{assumption}\label{ass_compact_domain}
	The set $\CM$ is compact with diameter $D$.
\end{assumption}
\begin{assumption}	\label{ass_variance}
	Stochastic gradient  $\nabla \tilde{F}$ has bounded norm: $\forall \zB\!\in\!\ZM, \!\|\nabla \tilde{F}(\xB; \zB)\| \leq G_{\tilde{F}} $,
	and the norm of the gradient of  $\log p$ has bounded fourth-order moment, i.e., $
		\EBB_{\zB \sim p(\xB; \zB)} \|\nabla \log p(\zB;\xB)\|^4 \leq G_p^4$. 
	Further, we define $G = \max\{G_{\tilde{F}}, G_{p} \}.$
\end{assumption}
\begin{assumption}\label{ass_smooth}
	For all $\xB\in\CM$, the stochastic Hessian of $\nabla^2 \tilde{F}$ has bounded spectral norm $
		\forall \zB\in\ZM, \|\nabla^2 \tilde{F}(\xB; \zB)\| \leq L_{\tilde{F}},$
	and the spectral norm of the Hessian of the log-probability function has bounded second order moment:
$
	\EBB_{\zB \sim p(\zB; \xB)}[ \|\nabla^2 \log p(\zB; \xB)\|^2] \leq L_p^2
	$.
	Further, we define $
		L = \max\{L_{\tilde{F}}, L_{p} \}.
	$
\end{assumption}
\begin{assumption}\label{ass_Hessian_continuous}
	The stochastic Hessian is $L_{2, f}$-Lipschitz continuous, i.e, $\forall \xB, \yB\in\CM$ and all $\zB\in\ZM$,
	$	\|\nabla^2 \tilde{F}(\xB; \zB) - \nabla^2 \tilde{F}(\yB; \zB)\| \leq L_{2,\tilde{F}} \|\xB - \yB\|$. The Hessian of the log probability $\log p(\xB;\zB)$ is $L_{2,p}$-Lipschitz continuous: $\forall \xB, \yB\in\CM$ and all $\zB\in\ZM$, i.e.,
$	\|\nabla^2 \log p(\xB; \zB) - \nabla^2 \log p(\yB; \zB)\| \leq L_{2,p} \|\xB - \yB\|$.
	Also, we define $
		L_2 = \max\{L_{2,\tilde{F}}, L_{2,p} \}.$
\end{assumption}
\begin{remark}
	We note that high-order smoothness leads to faster convergence rates for gradient descent type algorithm.
	Concretely, for \emph{unconstrained} nonconvex problems, it is known that by assuming higher order smoothness (e.g. the Lipschitz continuity of the Hessian), one can obtain a faster convergence rate by exploiting only the first-order gradient algorithmically, e.g. \cite{Carmon2017} for the deterministic case and \cite{fang2019} for the SGD case (whether SPIDER has better rate with Lipschitz continuous Hessian is not known). However, in these results, the convergence rate \emph{explicitly} depends on the higher order smoothness parameter. 
	In the constrained case, whether higher order smoothness helps remains unknown.
	On the other hand, in our results, the higher order smoothness parameters $L_{2,\tilde{F}}$ and $L_{2,p}$ \emph{do not} enter the convergence results and are only required to evaluate the Hessian vector product when we have finite machine accuracy. 
	In fact, if we have the exact expressions of $\tilde F$ and $\log p$,  Assumption \ref{ass_Hessian_continuous} can be avoided by using the auto differential mechanism of Pytorch.  
\end{remark}

Next, we formally bound the variance of gradient approximation for \FWPP. 

\begin{lemma}
	Consider the \FWPP method outlined in Algorithm \ref{algo_FW++} and assume that in Step 8 we follow the update in \eqref{eqn_approximate_matrix_vector_product} to construct the gradient difference approximation $\tilde{\Delta}^t$ (Option II). If Assumptions \ref{ass_upper_bound_stochastic_function_value}, \ref{ass_compact_domain}, \ref{ass_variance}, \ref{ass_smooth}, and \ref{ass_Hessian_continuous} hold and we set the minibatch sizes to $|\MM_0| = ({G^2}/({\bar{L}^2D^2\epsilon^2}))$ and $|\MM| = {2}/{\epsilon}$, and the error of Hessian-vector product  approximation $\delta$ is $\OM(\epsilon^2)$ as in (\ref{eqn_small_delta}), then
	\begin{equation}
	\quad \EBB \left[ \|\gB^{t} - \nabla F(\xB^{t})\|^2 \right] \leq (1+\epsilon t)\bar{L}^2D^2\epsilon^2,\quad  \forall t\in \{0, \ldots, T-1 \},
	\end{equation}
where $\bar{L}$ is a constant defined as $\bar{L}^2 \defi 4B^2G^4 + 16G^4  + 4L^2 + 4B^2L^2$.
	\label{lemma_variance_general}
\end{lemma}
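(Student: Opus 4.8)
The plan is to bound $\EBB[\|\gB^t - \nabla F(\xB^t)\|^2]$ by induction on $t$ within a single episode, exploiting the recursive structure $\gB^t = \gB^{t-1} + \tilde{\Delta}^t$ together with the fact that $\tilde{\Delta}^t$ is (essentially) an unbiased estimator of $\Delta^t = \nabla F(\xB^t) - \nabla F(\xB^{t-1})$. Writing $\eB^t \defi \gB^t - \nabla F(\xB^t)$, the recursion gives $\eB^t = \eB^{t-1} + (\tilde{\Delta}^t - \Delta^t)$. Conditioned on the history $\FM_{t-1}$ (which determines $\xB^{t-1}$, $\xB^t$, and $\gB^{t-1}$, hence $\eB^{t-1}$), the estimator $\tilde{\Delta}^t$ is built from the fresh minibatch $\MM$ of tuples $(a,\zB(a))$, so $\EBB[\tilde{\Delta}^t \mid \FM_{t-1}]$ equals $\Delta^t$ up to the finite-difference bias of size $O(\delta)$ from \eqref{eqn_error}. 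Hence the cross term $\EBB[\langle \eB^{t-1}, \tilde{\Delta}^t - \Delta^t\rangle \mid \FM_{t-1}]$ is controlled by the $\delta$-bias, and the Pythagorean-type identity yields
\begin{equation}
\EBB[\|\eB^t\|^2 \mid \FM_{t-1}] \leq \|\eB^{t-1}\|^2 + \EBB[\|\tilde{\Delta}^t - \Delta^t\|^2 \mid \FM_{t-1}] + (\text{small }\delta\text{ term}).
\end{equation}

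The heart of the argument is then to show $\EBB[\|\tilde{\Delta}^t - \Delta^t\|^2 \mid \FM_{t-1}] \leq \bar{L}^2 D^2 \epsilon^3$ (so that summing over up to $q = \lceil 1/\epsilon \rceil$ steps in an episode gives the claimed $(1+\epsilon t)\bar{L}^2D^2\epsilon^2$). Since $\tilde{\Delta}^t$ is an average over $|\MM| = 2/\epsilon$ i.i.d. samples, its variance is at most $\frac{1}{|\MM|}\EBB[\|\tilde{\nabla}^2 F(\xB(a);\zB(a))(\xB^t-\xB^{t-1})\|^2]$. Here I would use $\|\xB^t - \xB^{t-1}\| = \eta_{t-1}\|\vB^{t-1}-\xB^{t-1}\| \leq \eta_{t-1} D$ (Assumption \ref{ass_compact_domain}), and crucially bound the second moment of the spectral norm of the Hessian estimator $\tilde{\nabla}^2 F(\yB;\zB)$ from \eqref{eqn_unbiased_second_order_differential_estimator}: expanding the five terms, applying the triangle inequality, and invoking Assumptions \ref{ass_upper_bound_stochastic_function_value}–\ref{ass_smooth} (the bound $B$ on $\tilde{F}$, the bounds $G_{\tilde F}, G_p$ on the gradients with the fourth-moment bound on $\|\nabla\log p\|$, and the bounds $L_{\tilde F}, L_p$ on the Hessians) shows $\EBB\|\tilde{\nabla}^2 F(\yB;\zB)\|^2 \leq \bar{L}^2$ with exactly the constant $\bar{L}^2 = 4B^2G^4 + 16G^4 + 4L^2 + 4B^2L^2$ (the factor $4$ or $16$ coming from squaring a sum of terms via $\|\sum_{i=1}^k a_i\|^2 \leq k\sum\|a_i\|^2$, and $G^4$ arising because two of the terms contain $\nabla\log p$ appearing quadratically or multiplied by $\tilde F$). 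Then with $|\MM| = 2/\epsilon$ and $\eta_{t-1}^2 \le \epsilon^2/4$ (or whatever the chosen step size delivers), one gets $\EBB[\|\tilde{\Delta}^t - \Delta^t\|^2] \lesssim \bar{L}^2 D^2 \epsilon^3$, absorbing the $O(\delta^2 D^4 L_2^2)$ finite-difference error since $\delta = O(\epsilon^2)$.

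For the base case $t \equiv 0 \pmod q$, the estimator is $\gB^t = \nabla F(\xB^t;\MM_0^t)$, an average of $|\MM_0| = G^2/(\bar{L}^2 D^2 \epsilon^2)$ i.i.d. unbiased samples with $\|\nabla\tilde F\| \leq G$, so $\EBB\|\gB^t - \nabla F(\xB^t)\|^2 \leq G^2/|\MM_0| = \bar{L}^2 D^2\epsilon^2$, matching $(1+\epsilon t)\bar{L}^2 D^2 \epsilon^2$ at $t$ a multiple of $q$ if one re-indexes the episode counter (or simply noting the bound at the restart point). Unrolling the per-step recursion over a full episode of length $q \le 1/\epsilon + 1$ gives $\EBB\|\eB^t\|^2 \leq \bar{L}^2 D^2\epsilon^2 + (t \bmod q)\cdot \bar{L}^2 D^2\epsilon^3 \leq (1+\epsilon t)\bar{L}^2 D^2\epsilon^2$.

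The main obstacle I anticipate is the careful bookkeeping in bounding $\EBB\|\tilde{\nabla}^2 F(\yB;\zB)\|^2 \leq \bar{L}^2$ so that the constants line up precisely — in particular correctly handling the cross terms like $[\nabla\log p][\nabla\tilde F]^\top$ and the term $\tilde F \cdot (\nabla\log p)(\nabla\log p)^\top$, where one must pass from a spectral-norm bound to a bound involving $\EBB\|\nabla\log p\|^4$ (via Cauchy–Schwarz, since $\EBB[\tilde F^2 \|\nabla\log p\|^4] \le B^2 G_p^4$) rather than a naive pointwise bound. A secondary subtlety is making sure the conditional-expectation argument is clean: the minibatch $\MM$ at step $t$ must be drawn independently of $\FM_{t-1}$, and the samples $\zB(a) \sim p(\cdot;\xB(a))$ along the interpolation path are exactly what make $\tilde{\nabla}^2 F$ unbiased for $\nabla^2 F(\xB(a))$ (Lemma \ref{lemma_hessian_estimator}) and hence $\tilde{\Delta}^t$ unbiased for $\Delta^t$ up to the $\delta$-error — this is precisely the place where the non-oblivious construction differs from \eqref{eqn_VR_estimator} and must be invoked correctly.
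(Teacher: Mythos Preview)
Your proposal is correct and follows essentially the same route as the paper. The paper first isolates the bound $\EBB_{\zB\sim p(\zB;\yB)}\|\tilde{\nabla}^2 F(\yB;\zB)\|^2 \leq \bar{L}^2$ as a separate lemma (proved exactly as you sketch, via the triangle inequality on the five terms of \eqref{eqn_unbiased_second_order_differential_estimator} and Assumptions~\ref{ass_upper_bound_stochastic_function_value}--\ref{ass_smooth}), and then runs the induction you describe: decompose $\gB^t-\nabla F(\xB^t)$ into the previous error $\eB^{t-1}$, the finite-difference bias $\xi_\delta - \tilde{\nabla}^2_t(\xB^t-\xB^{t-1})$, and the variance term $\tilde{\nabla}^2_t(\xB^t-\xB^{t-1})-\Delta^t$; kill the main cross term by conditional unbiasedness; bound the variance term by $\bar{L}^2 D^2\epsilon^2/|\MM|$; and absorb the $O(\delta)$ cross terms via the explicit smallness condition \eqref{eqn_small_delta}.

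Two minor remarks. First, this lemma as stated is for a single episode with the initial minibatch taken at $t=0$; the restart/mod-$q$ bookkeeping you invoke belongs to the later nonconvex variance lemma rather than here, so your unrolling should simply run from $0$ to $t$. Second, the step-size bound $\|\xB^t-\xB^{t-1}\|\le \epsilon D$ that both you and the paper use is not among the lemma's stated hypotheses; the paper's proof uses it implicitly (it is supplied by the step-size choices in the downstream theorems), so you are right to flag it as ``whatever the chosen step size delivers.''
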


The result in Lemma \ref{lemma_variance_general} shows that by $|\MM| = \mathcal{O}(\epsilon^{-1})$ calls to the stochastic oracle at each iteration, the variance of gradient approximation in \FWPP after $t$ iterations is on the order of $\mathcal{O}((1+\epsilon t)\epsilon)$.  In the following theorem, we use this result to characterize the convergence properties of our proposed \FWPP method for solving stochastic non-convex minimization problems. 
For simplicity, we analyze the convergence of gradient-difference estimator \eqref{eqn_gradient_difference_estimator}.
However, similar results can be obtained for the Hessian-vector product estimator~\eqref{eqn_approximate_matrix_vector_product} by setting $\delta = \OM(\epsilon^2)$.

\begin{theorem}	\label{thm_nonconvex}
	Consider Problem \eqref{eqn_convex_minimization_loss} when $F$ is a general non-convex function. Further, recall the \FWPP method outlined in Algorithm \ref{algo_FW++}. Suppose the conditions in Assumptions \ref{ass_upper_bound_stochastic_function_value}, \ref{ass_compact_domain}, \ref{ass_variance}, \ref{ass_smooth},  and \ref{ass_Hessian_continuous} are satisfied. Further, let $\bar{L}^2 \defi 4B^2G^4 + 16G^4  + 4L^2 + 4B^2L^2$. If we set \FWPP parameters to $\eta_t = \epsilon/(\bar{L} D)$,  $|\MM_h^t| = 2G/\epsilon$, $q = \lceil G/(16\epsilon) \rceil$, and $|\MM_0^{t}| = G^2/(8\epsilon^2)$, then the iterates generated by \FWPP satisfy the condition
	$	\mathbb{E}\left[V_\CM(\xB^{\bar{t}}; F)\right]\leq 5\epsilon D$,
	where the total number of iterations is $T = \bar{L}(F(\xB^*) - F(\xB^0))/\epsilon^2$.
\end{theorem}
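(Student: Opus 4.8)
The plan is to run the standard Frank-Wolfe descent-lemma argument, but with the gradient error term controlled by the variance bound of Lemma~\ref{lemma_variance_general} rather than assumed to be zero. First I would invoke $L$-smoothness of $F$ (which follows from Assumption~\ref{ass_smooth}, since $\|\nabla^2 F(\xB)\|\le L$ in expectation and in fact a smoothness constant is available; here the relevant constant is $\bar L D$-scaled appropriately, matching the choice $\eta_t=\epsilon/(\bar L D)$) to write, for the update $\xB^{t+1}=\xB^t+\eta_t(\vB^t-\xB^t)$,
\begin{equation*}
F(\xB^{t+1}) \le F(\xB^t) + \eta_t \langle \nabla F(\xB^t), \vB^t-\xB^t\rangle + \frac{\bar L D}{2}\eta_t^2\|\vB^t-\xB^t\|^2.
\end{equation*}
Then I would add and subtract $\gB^t$ inside the inner product: $\langle \nabla F(\xB^t),\vB^t-\xB^t\rangle = \langle \gB^t,\vB^t-\xB^t\rangle + \langle \nabla F(\xB^t)-\gB^t,\vB^t-\xB^t\rangle$. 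Since $\vB^t$ minimizes $\langle \gB^t,\vB\rangle$ over $\CM$, for any $\uB\in\CM$ we have $\langle \gB^t,\vB^t-\xB^t\rangle \le \langle \gB^t,\uB-\xB^t\rangle$, and in particular, optimizing over $\uB$ and using Cauchy--Schwarz plus $\|\vB^t-\xB^t\|\le D$ and $\|\uB-\xB^t\|\le D$, one gets
\begin{equation*}
\eta_t\, V_\CM(\xB^t;F) \le F(\xB^t)-F(\xB^{t+1}) + 2\eta_t D\|\gB^t-\nabla F(\xB^t)\| + \frac{\bar L D^3}{2}\eta_t^2.
\end{equation*}

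Next I would take expectations, sum over $t=0,\dots,T-1$, and telescope the $F(\xB^t)-F(\xB^{t+1})$ terms to $F(\xB^0)-F(\xB^T)\le F(\xB^0)-\min_\CM F = F(\xB^0)-F(\xB^*)$ (careful: in the minimization formulation $F(\xB^*)$ is the minimum, and the statement writes $F(\xB^*)-F(\xB^0)$; I'd reconcile signs so the telescoped quantity is nonnegative). For the error term, I would apply Jensen to pass from $\EBB\|\gB^t-\nabla F(\xB^t)\|$ to $(\EBB\|\gB^t-\nabla F(\xB^t)\|^2)^{1/2}$ and plug in Lemma~\ref{lemma_variance_general}: over one episode of length $q$, $t$ ranges over $\{0,\dots,q-1\}$ before a restart, so $\EBB\|\gB^t-\nabla F(\xB^t)\|^2 \le (1+\epsilon q)\bar L^2 D^2\epsilon^2 \le 2\bar L^2 D^2\epsilon^2$ using $q=\lceil G/(16\epsilon)\rceil$ and $\epsilon q \lesssim 1$ (this is where the particular choice of $q$ is used — it keeps $1+\epsilon q$ bounded by a constant, here roughly $G/16+1$; I would need to be slightly more careful about the exact constant, possibly the intended reading is $\epsilon q = O(1)$ making the bound $O(\bar L^2 D^2\epsilon^2)$, so $\|\gB^t-\nabla F(\xB^t)\|\lesssim \bar L D\epsilon$). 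Thus each per-iteration error contributes $2\eta_t D \cdot c\bar L D\epsilon = O(\eta_t \bar L D^2\epsilon)$.

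Finally, dividing the telescoped sum by $T\eta_t$ and recalling $\bar t$ is uniform on $[T]$ so that $\EBB[V_\CM(\xB^{\bar t};F)] = \frac1T\sum_{t=0}^{T-1}\EBB[V_\CM(\xB^t;F)]$, I get
\begin{equation*}
\EBB[V_\CM(\xB^{\bar t};F)] \le \frac{F(\xB^0)-F(\xB^*)}{T\eta_t} + 2D\sqrt{2}\,\bar L D\epsilon + \frac{\bar L D^3}{2}\eta_t.
\end{equation*}
With $\eta_t=\epsilon/(\bar L D)$ the last term is $\tfrac12 D^2\epsilon$, the middle term is $O(D^2\epsilon)$, and with $T=\bar L(F(\xB^*)-F(\xB^0))/\epsilon^2$ — taking absolute value of the (nonnegative) suboptimality gap $|F(\xB^0)-F(\xB^*)|$ — the first term becomes $(F(\xB^0)-F(\xB^*))/(T\epsilon/(\bar L D)) = D\epsilon$. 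Summing the constants gives the claimed $5\epsilon D$ bound (the precise constant $5$ comes from tracking $1 + 2\sqrt2 + \tfrac12$ and the episode constant, which I'd tune). The main obstacle I anticipate is bookkeeping the gradient-error term across episode boundaries: I must make sure that $t$ in Lemma~\ref{lemma_variance_general} is measured from the last restart (at multiples of $q$), not from $0$, so that $1+\epsilon t \le 1+\epsilon q = O(1)$ uniformly; if instead $t$ could grow to $T=\Theta(1/\epsilon^2)$ the bound would blow up. A secondary subtlety is matching the smoothness constant used in the descent lemma to $\bar L D$ (so that $\eta_t\bar L D = \epsilon$ cleanly cancels), and being consistent about the sign convention of $F(\xB^*)-F(\xB^0)$ in the minimization versus maximization framing.
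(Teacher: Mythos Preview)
Your approach is the paper's approach: descent lemma from smoothness, add/subtract $\gB^t$, use optimality of $\vB^t$, bound the gradient error via a per-episode variance bound, telescope, and average. Your observation that $t$ in the variance bound must be measured from the last restart (so $1+\epsilon t\le 1+\epsilon q=O(1)$) is exactly the point.

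Two places where your constants go off the rails and prevent you from landing on $5\epsilon D$. First, the smoothness constant of $F$ is $\bar L$, not $\bar L D$: the paper shows $\|\nabla^2 F(\xB)\|\le\bar L$ directly from Lemma~\ref{lemma_appendix_hessian_spectral_norm} and Jensen, so the quadratic term is $\tfrac{\bar L}{2}\eta_t^2\|\vB^t-\xB^t\|^2$, which after dividing by $\eta_t$ contributes $O(D\epsilon)$ rather than $O(D^2\epsilon)$. Second, Lemma~\ref{lemma_variance_general} is stated with batch sizes $|\MM_0|=G^2/(\bar L^2 D^2\epsilon^2)$ and $|\MM|=2/\epsilon$, which differ from those in the theorem, and its conclusion carries a $\bar L^2 D^2$ prefactor that would leave you with $2\sqrt{2}\,\bar L D^2\epsilon$ in the middle term. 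The paper instead re-derives the recursion with the theorem's parameters (Lemma~\ref{lemma_nonconcave_var_bound}): the per-step increment is $4\eta_t^2 D^2\bar L^2/|\MM_h^t|$, and plugging $\eta_t=\epsilon/(\bar L D)$ cancels the $\bar L D$ cleanly, giving $\EBB\|\gB^t-\nabla F(\xB^t)\|^2\le\epsilon^2/4$ with no $\bar L,D$ dependence. With those two fixes your three terms become $D\epsilon$, $2D\epsilon$, and $2D\epsilon$ (the paper's bookkeeping yields $4\eta D\|\cdot\|+2\bar L\eta^2 D^2$ rather than your $2\eta D\|\cdot\|+\tfrac{\bar L}{2}\eta^2 D^2$), summing to $5\epsilon D$.
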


This theorem shows that after at most $\mathcal{O}(1/\epsilon^2)$ iterations, \FWPP reaches an $\epsilon$-FOSP. To characterize the overall complexity, we take into account the number of stochastic gradient evaluations per iteration, in the following corollary.
\begin{corollary}[{oracle complexity for non-concave case}] \label{col_nonconcave}
	Assume that the target accuracy $\epsilon$ satisfies $mod(T, q) = 0$.
	The overall stochastic complexity is 
	\vspace{-1.5mm}
	\begin{equation}
	\sum_{i=0}^T |\MM_h^i| + \sum_{k=0}^{T/q} |\MM_0^{qk}| \!=\! \OM\left(\bar{L}G(F(\xB^*) - F(\xB^0))/\epsilon^3\right).\vspace{-1.5mm}
	\end{equation}
\end{corollary}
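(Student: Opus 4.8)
The plan is to derive the bound by a straightforward substitution of the parameter choices fixed in Theorem~\ref{thm_nonconvex} into the two sums on the left-hand side, handling each sum on its own and then adding. Recall those choices: $T = \bar{L}(F(\xB^*) - F(\xB^0))/\epsilon^2$, $q = \lceil G/(16\epsilon)\rceil$, $|\MM_h^t| = 2G/\epsilon$ for every $t$, and $|\MM_0^{t}| = G^2/(8\epsilon^2)$ for every $t$. First I would explain why the displayed quantity upper bounds the true number of calls to the stochastic oracle: at iteration $t$, \FWPP queries the oracle either on the minibatch $\MM_0^t$ when $t$ is a multiple of $q$ (a ``restart'' step) or on the minibatch $\MM_h^t$ otherwise, spending $\OM(|\MM_0^t|)$, respectively $\OM(|\MM_h^t|)$, stochastic gradients (in the finite-difference implementation each Hessian--vector sample costs only a constant number of gradient evaluations, so the order is unchanged); summing over the $T/q$ restart steps and the $T - T/q$ non-restart steps, then crudely enlarging the two index sets to $\{0,\dots,T\}$ and $\{qk : 0 \le k \le T/q\}$, produces exactly the left-hand side. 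The divisibility hypothesis $q \mid T$ is what makes the episode count equal to $T/q$ exactly.

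For the gradient-difference part, I would write $\sum_{i=0}^{T}|\MM_h^i| = (T+1)\cdot (2G/\epsilon)$ and substitute the value of $T$ to get $2\bar{L}G(F(\xB^*)-F(\xB^0))/\epsilon^3 + 2G/\epsilon$. For the restart part, I would first use the one-sided estimate $q \ge G/(16\epsilon)$ to bound $T/q \le 16\epsilon T/G = 16\bar{L}(F(\xB^*)-F(\xB^0))/(G\epsilon)$, whence $\sum_{k=0}^{T/q}|\MM_0^{qk}| = (T/q+1)\cdot(G^2/(8\epsilon^2)) \le 2\bar{L}G(F(\xB^*)-F(\xB^0))/\epsilon^3 + G^2/(8\epsilon^2)$. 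Treating the problem constants $\bar{L}, G, F(\xB^*)-F(\xB^0)$ as fixed and $\epsilon$ small, the additive $2G/\epsilon$ and $G^2/(8\epsilon^2)$ terms are of strictly lower order, so each sum is $\OM(\bar{L}G(F(\xB^*)-F(\xB^0))/\epsilon^3)$; adding them gives the claim.

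I do not expect any real obstacle here. The only points that need a little care are (i) handling the ceiling in $q$, for which the one-sided inequality $q \ge G/(16\epsilon)$ is exactly what keeps the episode count $T/q$ small, and (ii) verifying that the lower-order remainders (the ``$+1$'' contributions together with the rounding slack, i.e.\ the $2G/\epsilon$ and $G^2/(8\epsilon^2)$ terms) are absorbed into the $1/\epsilon^3$ term under the usual convention that the problem-dependent constants are held fixed while $\epsilon \to 0$. All the substance lies in Theorem~\ref{thm_nonconvex} --- and in the variance bound of Lemma~\ref{lemma_variance_general} that feeds it --- while this corollary is merely the bookkeeping that converts the $\OM(1/\epsilon^2)$ iteration count and the per-iteration batch sizes into an $\OM(1/\epsilon^3)$ sample-complexity bound.
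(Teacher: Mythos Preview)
Your proposal is correct and matches the paper's treatment: the corollary is stated without a separate proof and is meant to follow by exactly the bookkeeping you describe, namely substituting the parameter choices $T = \bar{L}(F(\xB^*)-F(\xB^0))/\epsilon^2$, $q = \lceil G/(16\epsilon)\rceil$, $|\MM_h^t| = 2G/\epsilon$, and $|\MM_0^t| = G^2/(8\epsilon^2)$ from Theorem~\ref{thm_nonconvex} into the two sums and noting that both contribute $\OM(\bar{L}G(F(\xB^*)-F(\xB^0))/\epsilon^3)$.
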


According to Corollary \ref{col_nonconcave}, \FWPP finds an $\epsilon$-FOSP for general stochastic non-concave problems after at most $\mathcal{O}(1/\epsilon^3)$ stochastic gradient evaluations.

\begin{remark}
The results in Theorem \ref{thm_nonconvex} and Corollary \ref{col_nonconcave} hold for the general non-oblivious problem~\eqref{eqn_DR_submodular_maximization_loss}. Indeed, such complexity bounds also hold for the oblivious setting and the proof follows similarly (with requiring less assumptions). More precisely, to prove the same theoretical guarantees for the oblivious case, we only require Assumptions \ref{ass_compact_domain}, \ref{ass_smooth} and the boundedness of variance $\EBB_{\zB\sim p(\zB)}\|\tilde{F}(\xB;\zB) - F(\xB)\|^2\leq G^2$.
\label{remark_convex}
\end{remark}

\subsection{Convergence Analysis of \FWPP: Convex Setting}
In this section, we establish the complexity of \FWPP for finding an $\epsilon$-approximate solution when the function $F$ in \eqref{eqn_convex_minimization_loss} is convex or equivalently the function $F$ in \eqref{eqn_DR_submodular_maximization_loss} is concave.

\begin{theorem}	\label{thm_convex}
Consider Problem \eqref{eqn_DR_submodular_maximization_loss} when $F$ is a concave function. Further, recall the \FWPP method outlined in Algorithm \ref{algo_FW++}. Suppose the conditions in Assumptions \ref{ass_upper_bound_stochastic_function_value}, \ref{ass_compact_domain}, \ref{ass_variance}, \ref{ass_smooth}, and \ref{ass_Hessian_continuous} are satisfied. Further, let $\bar{L}^2 \defi 4B^2G^4 + 16G^4  + 4L^2 + 4B^2L^2$. If we set \FWPP parameters to $\eta_t = {2}/({t+2})$,  $|\MM_h^t| \!=\! 16(t\!+\!2)$ and $|\MM_0^{t}| = ({G^2(t\!+\!1)^2})/({\bar{L}^2D^2})$, then the  iterates generated by \FWPP satisfy
\vspace{-1.5mm}
	\begin{equation*}
		F(\xB^*) - \EBB [F(\xB^t)]\leq \frac{28\bar{L}D^2 + (F(\xB^*) - F(\xB^0))}{t+2}.\vspace{-1.5mm}
	\end{equation*}
\end{theorem}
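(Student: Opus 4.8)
The plan is to carry out the usual Frank--Wolfe descent argument while keeping the gradient error $\gB^t-\nabla F(\xB^t)$ explicit, and then to control that error by a variance bound that decays like $1/(t+2)^2$ thanks to the restart schedule (restarts at $t$ with $\log_2 t\in\ZBB$, and at $t=0,1$). As a preliminary, Lemma~\ref{lemma_hessian_estimator} gives $\nabla^2 F(\yB)=\EBB[\tilde{\nabla}^2 F(\yB;\zB)]$; grouping the five terms of \eqref{eqn_unbiased_second_order_differential_estimator} into four pieces and applying Assumptions~\ref{ass_upper_bound_stochastic_function_value}, \ref{ass_variance}, \ref{ass_smooth} yields $\EBB\|\tilde{\nabla}^2 F(\yB;\zB)\|^2\le\bar{L}^2$, hence $\|\nabla^2 F(\yB)\|\le\bar{L}$, i.e.\ $F$ is $\bar{L}$-smooth. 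Using $\bar{L}$-smoothness along $\xB^{t+1}=\xB^t+\eta_t(\vB^t-\xB^t)$, the diameter bound $\|\vB^t-\xB^t\|\le D$ (Assumption~\ref{ass_compact_domain}), the optimality $\langle\gB^t,\vB^t-\xB^t\rangle\ge\langle\gB^t,\xB^*-\xB^t\rangle$, and concavity of $F$ (to write $\langle\nabla F(\xB^t),\vB^t-\xB^t\rangle\ge F(\xB^*)-F(\xB^t)+\langle\gB^t-\nabla F(\xB^t),\xB^*-\vB^t\rangle$), one obtains, with $h_t:=F(\xB^*)-F(\xB^t)$,
\[
h_{t+1}\ \le\ (1-\eta_t)\,h_t\;+\;\eta_t D\,\|\gB^t-\nabla F(\xB^t)\|\;+\;\tfrac{1}{2}\bar{L}\,\eta_t^2 D^2 .
\]

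The heart of the proof is the variance estimate $\EBB\|\gB^t-\nabla F(\xB^t)\|^2=\mathcal{O}\big(\bar{L}^2 D^2/(t+2)^2\big)$ (in the spirit of Lemma~\ref{lemma_variance_general}, but with the convex-case batch sizes). Let $s\le t$ be the most recent restart; by the schedule $s\ge t/2$. Writing $e_i:=\gB^i-\nabla F(\xB^i)$, the decisive point is that $\tilde{\Delta}^i$ is an \emph{unbiased} estimate of $\Delta^i=\nabla F(\xB^i)-\nabla F(\xB^{i-1})$ — precisely what Lemma~\ref{lemma_hessian_estimator} combined with identity \eqref{eqn_integral_2} provides in the non-oblivious setting, and where the naive recursion \eqref{eqn_VR_estimator} would instead inject $\mathcal{O}(1)$ bias per step. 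Hence $e_i=e_{i-1}+(\tilde{\Delta}^i-\Delta^i)$ with $\EBB[\tilde{\Delta}^i-\Delta^i\mid\mathcal{F}_{i-1}]=0$, so the cross terms vanish and
\[
\EBB\|e_t\|^2\ =\ \EBB\|e_s\|^2\;+\;\sum_{i=s+1}^{t}\EBB\|\tilde{\Delta}^i-\Delta^i\|^2 .
\]
For the first term, $\gB^s$ averages $|\MM_0^s|=G^2(s+1)^2/(\bar{L}^2 D^2)$ i.i.d.\ unbiased gradient samples, so $\EBB\|e_s\|^2=\mathcal{O}(\bar{L}^2 D^2/(s+1)^2)$. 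For each increment, $\tilde{\Delta}^i=\tilde{\nabla}_i^2(\xB^i-\xB^{i-1})$ averages $|\MM_h^i|=16(i+2)$ samples with $\EBB\|\tilde{\nabla}^2 F\|^2\le\bar{L}^2$ and $\|\xB^i-\xB^{i-1}\|\le\eta_{i-1}D=2D/(i+1)$, giving $\EBB\|\tilde{\Delta}^i-\Delta^i\|^2\le\bar{L}^2 D^2/\big(4(i+1)^2(i+2)\big)$, whose tail sum from $s+1$ is $\mathcal{O}(\bar{L}^2 D^2/(s+1)^2)$. Adding the two contributions and using $s+1\ge(t+2)/2$ gives $\EBB\|e_t\|^2\le\tfrac{9}{2}\bar{L}^2 D^2/(t+2)^2$, hence $\EBB\|e_t\|\le 3\bar{L} D/(t+2)$ by Jensen.

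Finally, substituting $\eta_t=2/(t+2)$ and this bound into the Frank--Wolfe inequality and taking expectations yields $\EBB[h_{t+1}]\le\frac{t}{t+2}\EBB[h_t]+\frac{8\bar{L} D^2}{(t+2)^2}$; setting $b_t:=t(t+1)\EBB[h_t]$ turns this into $b_{t+1}\le b_t+8\bar{L} D^2$, and since $b_0=0$ we obtain $\EBB[h_t]\le 8\bar{L} D^2/(t+1)\le 16\bar{L} D^2/(t+2)$ for $t\ge 1$, which — together with the trivial $t=0$ case — is dominated by the stated $\frac{28\bar{L} D^2+(F(\xB^*)-F(\xB^0))}{t+2}$, the extra slack absorbing the loose constants from the term-grouping estimates. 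I expect the main obstacle to be the variance bound: one must justify the bias-free telescoping, namely that replacing $\Delta^i$ by $\tilde{\Delta}^i$ introduces no drift (exactly the contribution of the paper's Hessian-vector estimator, as opposed to the classical SPIDER recursion), and then do the bookkeeping across restart boundaries that upgrades the $\mathcal{O}(1/(s+1)^2)$ bound at the last restart to an $\mathcal{O}(1/(t+2)^2)$ bound holding uniformly in $t$. The Frank--Wolfe descent step and the closing induction are routine.
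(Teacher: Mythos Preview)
Your proposal is correct and follows essentially the same approach as the paper: establish $\bar{L}$-smoothness from the second-moment bound on the Hessian estimator, prove the variance bound $\EBB\|\gB^t-\nabla F(\xB^t)\|^2=\mathcal{O}(\bar{L}^2D^2\eta_t^2)$ using the restart-at-powers-of-two schedule (this is exactly the paper's Lemma~\ref{lemma_concave_var_bound}), and close with the standard Frank--Wolfe one-step recursion plus induction. The only differences are cosmetic---the paper applies Young's inequality to the cross term $\eta_t\langle\nabla F(\xB^t)-\gB^t,\vB^t-\xB^t\rangle$ (so it works directly with $\EBB\|e_t\|^2$) and uses the unbiasedness $\EBB\langle\gB^t,\xB^*-\xB^t\rangle=\EBB\langle\nabla F(\xB^t),\xB^*-\xB^t\rangle$, whereas you use Cauchy--Schwarz followed by Jensen and the $b_t=t(t+1)\EBB[h_t]$ telescoping in place of the paper's $\EBB[\delta_t]\le 2c/(t+2)$ induction with $c=\max\{14\bar{L}D^2,\delta_0\}$.
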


Theorem \ref{thm_convex} shows that after at most $\mathcal{O}(1/\epsilon)$ iterations  and $\mathcal{O}(1/\epsilon)$ calls to a linear minimization oracle \FWPP reaches an $\epsilon$-approximate solution. Next we characterize the overall complexity of \FWPP in terms of stochastic gradient evaluations.
\vspace{-.1cm}
\begin{corollary}	\label{col_concave}
	Assume that $\epsilon$ satisfies $t\! =\! ({28LD^2\! +\! (F(\xB^*) \!-\! F(\xB^0))})/{\epsilon}  \!=\! 2^K$ for some $K \in \NBB$. Then, the overall stochastic complexity is 
	\vspace{-1.5mm}
	\begin{equation}
		\sum_{i=0}^t |\MM_h^i| + \sum_{k=0}^{K} |\MM_0^{2^k}| \!=\! \OM\left(\frac{\bar{L}^2D^4}{\epsilon^2} \!+\! \frac{G^2D^2}{\epsilon^2} \!+\! \frac{G^2(F(\xB^*) \!- \!F(\xB^0))^2}{\bar{L}^2D^2\epsilon^2}\right).\vspace{-1.5mm}
	\end{equation}
\end{corollary}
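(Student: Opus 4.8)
The plan is to substitute the parameter choices of Theorem~\ref{thm_convex} into the two sums that appear in the statement and estimate each one. First, by Theorem~\ref{thm_convex}, running \FWPP for $t$ iterations with $\eta_s = 2/(s+2)$, $|\MM_h^s| = 16(s+2)$, and $|\MM_0^s| = G^2(s+1)^2/(\bar{L}^2D^2)$ gives $F(\xB^*) - \EBB[F(\xB^t)] \le (28\bar{L}D^2 + (F(\xB^*)-F(\xB^0)))/(t+2)$, so an $\epsilon$-accurate solution is reached once $t+2 \ge (28\bar{L}D^2 + (F(\xB^*)-F(\xB^0)))/\epsilon$. The corollary's hypothesis picks exactly such a $t$ and, in addition, forces $t = 2^K$; this makes the restart iterations (those with $\log_2 t \in \ZBB$, namely $\{1,2,4,\dots,2^K\}$) correspond to $k \in \{0,\dots,K\}$, and crucially yields $4^K = t^2$.

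Next I would bound the inner-loop cost $\sum_{i=0}^{t}|\MM_h^i|$. Since $|\MM_h^i| = 16(i+2)$, this is an arithmetic sum: $\sum_{i=0}^{t} 16(i+2) = 8(t+1)(t+4) = \OM(t^2)$. (Summing $|\MM_h^i|$ over every $i$, rather than only over the non-restart iterations where it is actually drawn, only inflates the count, so this remains a valid upper bound.) Then I would bound the restart cost $\sum_{k=0}^{K}|\MM_0^{2^k}|$. Using $|\MM_0^{2^k}| = G^2(2^k+1)^2/(\bar{L}^2D^2) \le 4G^2 4^k/(\bar{L}^2D^2)$, the sum is a geometric series dominated by its last term: $\sum_{k=0}^{K} 4G^2 4^k/(\bar{L}^2D^2) \le \tfrac{16}{3}\,G^2 4^K/(\bar{L}^2D^2) = \OM\big(G^2 t^2/(\bar{L}^2 D^2)\big)$, since $4^K = t^2$.

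Finally, I would plug $t = \OM\big((\bar{L}D^2 + (F(\xB^*)-F(\xB^0)))/\epsilon\big)$ into $\OM(t^2) + \OM\big(G^2 t^2/(\bar{L}^2D^2)\big)$ and expand via $(a+b)^2 \le 2a^2 + 2b^2$: the $\OM(t^2)$ piece produces $\OM(\bar{L}^2D^4/\epsilon^2)$ (plus a lower-order $\OM((F(\xB^*)-F(\xB^0))^2/\epsilon^2)$ term), while the $\OM\big(G^2 t^2/(\bar{L}^2D^2)\big)$ piece produces $\OM(G^2D^2/\epsilon^2) + \OM\big(G^2(F(\xB^*)-F(\xB^0))^2/(\bar{L}^2D^2\epsilon^2)\big)$; together these are the three stated terms.

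There is no genuine obstacle here — it is a bookkeeping estimate. The only points that merit a moment's care are that the geometric sum over the restart minibatches collapses to $\OM(t^2)$ precisely because $K = \log_2 t$ makes $4^K = t^2$, and that the $G^2/(\bar{L}^2D^2)$ prefactor on that term is what routes the cross and quadratic parts of $(28\bar{L}D^2 + (F(\xB^*)-F(\xB^0)))^2$ into the last two terms of the claimed complexity.
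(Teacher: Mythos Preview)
Your argument is correct and is exactly the routine computation the paper leaves implicit (no proof of this corollary appears in the paper; it is treated as an immediate consequence of Theorem~\ref{thm_convex}). The one nit is your labeling of the residual $\OM\!\big((F(\xB^*)-F(\xB^0))^2/\epsilon^2\big)$ as ``lower-order'': it is not intrinsically smaller than the stated terms, but it is absorbed by the third term whenever $G \ge \bar{L}D$, and otherwise by the first via $F(\xB^*)-F(\xB^0)=\OM(\bar{L}D^2)$ from $t=2^K$; either way the three displayed terms suffice.
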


According to Corollary \ref{col_concave}, \FWPP finds an $\epsilon$-approximate solution for stochastic concave maximization (equivalently convex minimization) after at most computing $\mathcal{O}(1/\epsilon^2)$ stochastic gradient evaluations. 

\begin{remark}
The results in Theorem \ref{thm_convex} and Corollary \ref{col_concave} hold for the general non-oblivious problem in~\eqref{eqn_DR_submodular_maximization_loss} when the objective function $F$ is concave. Indeed, such complexity bounds also hold for the oblivious setting and the proof follows similarly (with requiring less assumptions). 
More precisely, the same theoretical guarantees as in Remark \ref{remark_convex} holds for the oblivious case under the Assumptions \ref{ass_compact_domain} and \ref{ass_smooth} and the bounded variance assumption $\EBB_{\zB\sim p(\zB)}\|\tilde{F}(\xB;\zB) - F(\xB)\|^2\leq G^2$.
\end{remark}

\section{Stochastic Continuous DR-submodular Maximization}
In this section, we focus on a special case of the non-oblivious problem in \eqref{eqn_DR_submodular_maximization_loss} when the function $F$ is continuous DR-submodular. We study both monotone and non-monotone settings and for each of them we present a new stochastic variant of the continuous greedy method \cite{calinescu2011maximizing} that can be interpreted as a conditional gradient method. We then extend our results to the problem of maximizing discrete submodular set functions when the objective is defined as an expectation of a collection of random set functions.

\subsection{Stochastic Continuous $\text{Greedy}{++}$: Monotone Setting} \label{section_scgpp}
We present $\scg$ (SCG++) which is the first method to obtain a $[(1-1/e)\OPT-\epsilon]$ solution with $O(1/\epsilon^2)$ stochastic oracle complexity for maximizing monotone but stochastic DR-submodular functions over a compact convex body.
 \SCGPP essentially operates in a conditional gradient manner. To be more precise, at each iteration $t$, given a gradient estimator $\gB^{t}$, \SCGPP solves the subproblem
\vspace{-1mm}
\begin{equation}\label{direction_update}
	\vB^t = \argmax_{\vB \in \CM} \langle \vB, \gB^{t}\rangle\vspace{-1mm}
\end{equation}
to obtain $\vB^t$ in $\CM$ as ascent direction, which is then added to the iterate $\xB^{t+1}$ with a scaling factor ${1}/{T}$, i.e., the new iterate $\xB^{t+1}$ is computed by following the update
\vspace{-1mm}
\begin{equation}\label{var_update}
\xB^{t+1} = \xB^{t} +\frac{1}{T}\vB^t,\vspace{-1mm}
\end{equation}
where $T$ is the total number of iterations of the algorithm.
Note the difference between \eqref{var_update} and \eqref{eqn_update_fw}.
The iterates are assumed to be initialized at the origin which may not belong to the feasible set $\CM$. Though each iterate $\xB^{t}$ may not necessarily be in $\CM$, the feasibility of the final iterate $\xB^{T}$ is guaranteed by the convexity of $\CM$.
Note that the iterate sequence $\{\xB^{s}\}_{s=0}^T$ can be regarded as a path from the origin (as we manually force $\xB^0 = 0$) to some feasible point in $\CM$.
The key idea in \SCGPP is to exploit the high correlation between the consecutive iterates originated from the $\OM({1}/{T})$-sized increments to maintain a highly accurate estimate $\gB^{t}$, which is evaluated based on the gradient estimation scheme presented in Section \ref{sec:sfw}. Note that by replacing the gradient approximation vector $\gB^t$ in the update of \SCGPP by the exact gradient of the objective function, we recover the update of continuous greedy \cite{calinescu2011maximizing, bian2017guaranteed}.

\begin{algorithm}[t]
	\caption{\texttt{Stochastic (Measured) Continuous Greedy}{++}}
	\label{algo_scg++}
	\begin{algorithmic}[1]
		\REQUIRE Minibatch size $|\MM_0|$ and $|\MM|$, and total number of rounds $T$
		\STATE Initialize $\xB^0 = 0$;
		\FOR{$t = 1$ \TO $T$}
		\IF{$t = 1$}
		\STATE Sample a minibatch $\MM_0$ of $\zB$ based on $p(\zB;\xB^0)$ and find $\gB^0 \defi \nabla \tilde{F}(\xB^0; \MM_0) $;
		\ELSE
		\STATE 
		Sample a minibatch $\MM$ of $\zB$ according {{ to $p(\zB;\xB(a))$ where $a$ is a chosen uniformly at random from $[0,1]$ and $\xB(a) := a\cdot\xB^{t}+(1-a)\cdot \xB^{t-1}$}};
		\STATE Compute the Hessian approximation $\tilde{\nabla}_{t}^2$ corresponding to $\MM$ based on \eqref{eqn_integral_estimator};
		\STATE Construct $\tilde{\Delta}^t$ based on \eqref{eqn_gradient_difference_estimator} (Option I) or \eqref{eqn_approximate_matrix_vector_product} (Option II);
		\STATE Update the stochastic gradient approximation $\gB^t := \gB^{t-1} + \tilde{\Delta}^t;$
		\ENDIF
		\STATE Set feasible set $\CM^t = \CM$ (\SCGPP) or $\CM^t = \{\vB\in\CM| \vB\leq \bar \uB - \xB^{t}\}$ (\NMSCGPP);
		\STATE Compute the ascent direction $\vB^t := \argmax_{\vB\in\CM^{t}}\{\vB^\top\gB^t\}$;
		\STATE Update the variable $\xB^{t+1} := \xB^t + 1/{T}\cdot{\vB^t}$;
		\ENDFOR
	\end{algorithmic}
\end{algorithm}
We proceed to analyze the convergence property of Algorithm \ref{algo_scg++} using \eqref{eqn_approximate_matrix_vector_product} as the gradient-difference estimation. 
Similar results can be obtained by using \eqref{eqn_gradient_difference_estimator}. 
We first specify the extra assumptions required for the analysis of the \SCGPP method.

\begin{assumption}\label{ass_upper_bound_function_value}
	The function $F$ satisfies $F(\mathbf{0})\geq 0$. 
\end{assumption}

\begin{assumption}
	$F$ is DR-submodular.
	\label{ass_function_submodularity}
\end{assumption}

\begin{assumption}
	$F$ is monotone.
	\label{ass_function_monotone}
\end{assumption}

Next, we incorporate the bound on the noise of gradient approximation presented in Lemma~\ref{lemma_variance_general} to characterize the convergence guarantee of \SCGPP.
\lightblue{Note that the following result appears as Theorem 1 in \cite{NIPS2019_9466}}.

\begin{theorem}
Consider \SCGPP outlined in Algorithm \ref{algo_scg++} and assume that in Step 8 we use the update in \eqref{eqn_approximate_matrix_vector_product} to find the gradient difference approximation $\tilde{\Delta}^t$.
	If Assumptions \ref{ass_upper_bound_stochastic_function_value}-\ref{ass_function_monotone} hold, then the output of  \SCGPP denoted by $\xB^T$ satisfies
	\vspace{-1.5mm}
	\begin{equation*}
	\EBB \left[F(\xB^{T})\right]\geq (1-1/e)F(\xB^*) - 2L\epsilon D^2,\vspace{-1.5mm}
	\end{equation*}
	by setting $|\MM_0| = \frac{G^2}{2\bar{L}^2D^2\epsilon^2}$, $|\MM| = \frac{1}{2\epsilon}$, $T = \frac{1}{\epsilon}$, and $\delta = \OM(\epsilon^2)$.
	Here $\bar{L}$ is a constant defined as $\bar{L}^2 \defi 4B^2G^4 + 16G^4  + 4L^2 + 4B^2L^2$.
	\label{thm_main}
\end{theorem}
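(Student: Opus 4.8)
The plan is to follow the classical analysis of the continuous greedy method of \cite{calinescu2011maximizing, bian2017guaranteed}, but carried out with the \emph{stochastic} gradient estimator $\gB^t$ instead of the exact gradient, and controlling the resulting error terms using the variance bound in Lemma~\ref{lemma_variance_general}. First I would record the standard per-step progress inequality for continuous greedy: by the smoothness of $F$ (Assumption \ref{ass_smooth}) and the update $\xB^{t+1}=\xB^t+\frac1T\vB^t$,
\begin{equation*}
F(\xB^{t+1}) \geq F(\xB^t) + \frac1T\langle \nabla F(\xB^t),\vB^t\rangle - \frac{L}{2T^2}\|\vB^t\|^2.
\end{equation*}
Next, using monotonicity and DR-submodularity (Assumptions \ref{ass_function_submodularity}, \ref{ass_function_monotone}), together with the fact that $\vB^t$ maximizes $\langle \vB,\gB^t\rangle$ over $\CM$ and $\xB^*\in\CM$, one shows the key lower bound $\langle \nabla F(\xB^t),\vB^t\rangle \geq F(\xB^*) - F(\xB^t) - \langle \vB^t-\xB^*,\gB^t-\nabla F(\xB^t)\rangle$; the crucial submodular inequality $F(\xB^*\vee\xB^t)\ge F(\xB^*)$ combined with the gradient-antitone property gives $\langle\nabla F(\xB^t),\xB^*\rangle \ge F(\xB^*)-F(\xB^t)$ (this is where feasibility of $\xB^t$ and $\xB^*$ and the origin initialization $\xB^0=\zeroB$ are used, so that $\xB^t\in\CM$ throughout). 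Substituting, and bounding the inner-product error term by Cauchy--Schwarz and $\|\vB^t-\xB^*\|\le D$ (Assumption \ref{ass_compact_domain}), gives
\begin{equation*}
F(\xB^{t+1}) \geq F(\xB^t) + \frac1T\big(F(\xB^*)-F(\xB^t)\big) - \frac{D}{T}\|\gB^t-\nabla F(\xB^t)\| - \frac{LD^2}{2T^2}.
\end{equation*}

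Then I would take expectations and insert the variance estimate. By Lemma~\ref{lemma_variance_general}, with the stated choices $|\MM_0|=G^2/(2\bar L^2 D^2\epsilon^2)$, $|\MM|=1/(2\epsilon)$, $\delta=\OM(\epsilon^2)$, we have $\EBB\|\gB^t-\nabla F(\xB^t)\|^2 \le (1+\epsilon t)\bar L^2 D^2\epsilon^2$, hence by Jensen $\EBB\|\gB^t-\nabla F(\xB^t)\| \le \sqrt{1+\epsilon t}\,\bar L D\epsilon$. Since $T=1/\epsilon$, we have $t\le T$ so $1+\epsilon t\le 2$ and $\EBB\|\gB^t-\nabla F(\xB^t)\| \le \sqrt2\,\bar L D\epsilon = \OM(\bar L D\epsilon)$ uniformly in $t$. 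Plugging in and writing $h^t \defi F(\xB^*)-\EBB[F(\xB^t)]$, the recursion becomes $h^{t+1} \le (1-\tfrac1T)h^t + \OM(\bar L D^2\epsilon/T) + \OM(LD^2/T^2)$. Unrolling this linear recursion over $T$ steps, using $(1-1/T)^T\le 1/e$ and $\sum_{t=0}^{T-1}(1-1/T)^{T-1-t}\le T$, yields
\begin{equation*}
h^T \leq \frac1e h^0 + \OM(\bar L D^2\epsilon) + \OM\!\big(LD^2/T\big) = \frac1e F(\xB^*) + \OM(\bar L D^2\epsilon),
\end{equation*}
where I used $h^0 = F(\xB^*)-F(\xB^0)\le F(\xB^*)$ by Assumption \ref{ass_upper_bound_function_value}, and $1/T=\epsilon$. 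Rearranging gives $\EBB[F(\xB^T)] \ge (1-1/e)F(\xB^*) - \OM(\bar L D^2\epsilon)$, and tracking the constants carefully (the $2\delta$-type errors from the finite-difference Hessian-vector product, bounded via \eqref{eqn_error} with $\delta=\OM(\epsilon^2)$, contribute only lower-order terms) yields the stated bound $\EBB[F(\xB^T)]\ge (1-1/e)F(\xB^*) - 2L\epsilon D^2$ after rescaling $\epsilon$.

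I expect the main obstacle to be the careful bookkeeping of the error terms rather than any conceptual difficulty: specifically, (i) verifying that the finite-difference approximation error in Option II (\eqref{eqn_approximate_matrix_vector_product}) truly only contributes at order $\delta = \OM(\epsilon^2)$ per step and hence $\OM(\epsilon)$ total, so it can be absorbed into the leading $\OM(\bar L D^2\epsilon)$ term; (ii) ensuring the uniform bound $\EBB\|\gB^t-\nabla F(\xB^t)\|\le \OM(\bar L D\epsilon)$ holds with the \emph{right} constants so that the final constant comes out as exactly $2L$ (as opposed to an unspecified $\OM(\bar L)$) — this likely requires a slightly sharper version of the variance tracking than what Lemma~\ref{lemma_variance_general} states, or a reorganization of constants; and (iii) the submodular inequality step $\langle\nabla F(\xB^t),\xB^*-\xB^t\rangle\ge F(\xB^*)-F(\xB^t)$, which must be stated for iterates $\xB^t$ that are convex combinations of $\zeroB$ and points of $\CM$ but may themselves lie outside $\CM$, so one needs the version of this bound valid on the whole box $\prod_i[0,1]$ (or $\prod_i \XM_i$) rather than only on $\CM$. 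None of these is deep, but getting the advertised constant right is the delicate part.
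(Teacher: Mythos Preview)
Your proposal is correct and follows essentially the same route as the paper's own proof: establish smoothness of $F$, invoke the monotone DR-submodular inequality $\langle\nabla F(\xB^t),\xB^*\rangle\ge F(\xB^*)-F(\xB^t)$, control the gradient-estimation error via Lemma~\ref{lemma_variance_general}, and unroll the resulting one-step recursion over $T=1/\epsilon$ steps. One clarification on your obstacle (ii): the smoothness constant of $F$ is $\bar L$, not $L$ --- in the non-oblivious setting $\nabla^2 F\neq\EBB[\nabla^2\tilde F]$, so smoothness of $F$ comes from Lemma~\ref{lemma_appendix_hessian_spectral_norm} rather than directly from Assumption~\ref{ass_smooth}; the paper's proof in fact concludes with the bound $2\bar L\epsilon D^2$, so the ``$2L$'' in the theorem statement is a typo and you need not chase that constant.
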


The result in Theorem \ref{thm_main} shows that after at most $T=1/\epsilon$ iterations the objective function value for the output of \SCGPP is at least $(1-1/e)\OPT -\mathcal{O}(\epsilon)$. As the number of calls to the stochastic oracle per iteration is of $\mathcal{O}(1/\epsilon)$, to reach a $(1-1/e)\OPT -\mathcal{O}(\epsilon)$ approximation guarantee the \SCGPP  method has an overall stochastic first-order oracle complexity of $\mathcal{O}(1/\epsilon^2)$. We formally characterize this result in the following corollary. 

\vspace{-1mm}
\begin{corollary}
To find a $[(1-1/e)\OPT - {\epsilon}]$ solution to Problem (\ref{eqn_DR_submodular_maximization_loss}) using Algorithm \ref{algo_scg++} with Option II, the overall stochastic first-order oracle complexity is $({2G^2D^2 \!+\! 4\bar{L}^2D^4})/{{\epsilon}^2}$ and the overall linear optimization oracle complexity is ${2\bar{L}D^2}/{{\epsilon}}$.
	\label{corollary_complexity_submodular}
\end{corollary}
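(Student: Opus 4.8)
This is a corollary of Theorem~\ref{thm_main} obtained by a parameter substitution followed by a count of the per-iteration oracle costs; I do not expect any genuine analytical difficulty, only careful bookkeeping. The first step is to fix the internal accuracy parameter of Theorem~\ref{thm_main} — call it $\epsilon_0$ — so that the error $2L\epsilon_0 D^2$ guaranteed there is at most the target accuracy $\epsilon$, while keeping the resulting algorithm parameters expressible through the single constant $\bar{L}$. Since $\bar{L}^2 = 4B^2G^4 + 16G^4 + 4L^2 + 4B^2L^2 \ge 4L^2$, we have $\bar{L}\ge 2L$, so the choice $\epsilon_0 := \epsilon/(2\bar{L}D^2)$ yields $2L\epsilon_0 D^2 = L\epsilon/\bar{L}\le \epsilon/2\le \epsilon$, and hence Theorem~\ref{thm_main} already gives $\EBB[F(\xB^T)]\ge (1-1/e)\OPT - \epsilon$ with this choice.

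Next I would read off the algorithm parameters induced by $\epsilon_0 = \epsilon/(2\bar{L}D^2)$. Plugging into Theorem~\ref{thm_main}: the number of rounds is $T = 1/\epsilon_0 = 2\bar{L}D^2/\epsilon$, the initial minibatch size is $|\MM_0| = G^2/(2\bar{L}^2D^2\epsilon_0^2) = 2G^2D^2/\epsilon^2$, the per-round minibatch size is $|\MM| = 1/(2\epsilon_0) = \bar{L}D^2/\epsilon$, and the finite-difference parameter is $\delta = \OM(\epsilon_0^2) = \OM(\epsilon^2)$ (consistent with the remark after \eqref{eqn_approximate_matrix_vector_product}). The linear optimization oracle is invoked exactly once per iteration, in the subproblem \eqref{direction_update}; over $T$ iterations this gives a linear optimization oracle complexity of $T = 2\bar{L}D^2/\epsilon$, which is the second claim.

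Finally I would sum the stochastic first-order oracle calls over all iterations. Iteration $t=1$ draws $|\MM_0|$ samples and evaluates one stochastic gradient per sample, contributing $|\MM_0| = 2G^2D^2/\epsilon^2$ calls. Each iteration $t = 2,\dots,T$ (Step 8 with Option II) draws $|\MM|$ samples, and for each sampled tuple $(a,\zB(a))$ the estimator $\xi_\delta$ of \eqref{eqn_approximate_matrix_vector_product} is formed through the finite gradient differences of \eqref{eqn_finite_gradient_difference}, i.e. at most two stochastic first-order oracle queries per sample (for the evaluation points $\yB\pm\delta\dB$, folding the base-point quantities into a single combined query); hence iteration $t$ contributes at most $2|\MM|$ calls. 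Summing and using $T-1\le T$ gives
\[
\underbrace{|\MM_0|}_{t=1} \;+\; \underbrace{(T-1)\cdot 2|\MM|}_{t\ge 2} \;\le\; \frac{2G^2D^2}{\epsilon^2} + 2T|\MM| \;=\; \frac{2G^2D^2}{\epsilon^2} + \frac{4\bar{L}^2D^4}{\epsilon^2} \;=\; \frac{2G^2D^2 + 4\bar{L}^2D^4}{\epsilon^2},
\]
which is the first claim. There is no real obstacle here; the only points deserving a little care are the reparametrization (using $\bar{L}\ge 2L$ to absorb the $L$ appearing in the error bound of Theorem~\ref{thm_main} into the cleaner $\bar{L}$-dependent minibatch sizes) and the accounting of the constant number of gradient evaluations hidden inside one Option-II finite-difference estimate.
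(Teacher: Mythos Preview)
Your proposal is correct and is exactly the intended derivation: the paper states this corollary without proof, and the argument you give — substitute $\epsilon_0=\epsilon/(2\bar{L}D^2)$ into the parameters of Theorem~\ref{thm_main} (noting from its proof that the error is actually $2\bar{L}\epsilon_0 D^2$, which your inequality $\bar{L}\ge 2L$ also covers), then read off $T$, $|\MM_0|$, $|\MM|$ and sum the per-iteration oracle calls — reproduces the stated constants precisely. The only loose point is your ``two first-order queries per sample'' accounting for Option~II, since $\xi_\delta$ in \eqref{eqn_approximate_matrix_vector_product} formally touches several gradients of $\tilde{F}$ and $\log p$; this is a constant-factor bookkeeping choice, and your count is the one that matches the corollary's exact constant $4\bar{L}^2D^4/\epsilon^2$.
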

\vspace{-5mm}

\subsection{Stochastic Continuous $\text{Greedy}{++}$: Non-monotone Setting}
In this section, we consider the maximization of a non-monotone stochastic DR-submodular function. To present our method for solving this class of problems we first need to specify the domain $\mathcal{X}$ of the expected function $F:\mathcal{X}\to\mathbb{R}_{+}$ which is given by $\mathcal{X}=\prod_{i=1}^d \mathcal{X}_i $ where each $\mathcal{X}_i=[\underbar{$u$}_i,\bar{u}_i]$ is a bounded interval. To simplify the notation we define the vectors $\underbar{$\uB$}=[\underbar{$u$}_1,\dots,\underbar{$u$}_d]$ and $\bar{\uB}=[\bar{u}_1,\dots, \bar{u}_d]$. In this section, we further assume that the constraint set $\mathcal{C}$ is down-closed, i.e., $\mathbf{0}\in \mathcal{C}$.
It is known that without this assumption, no constant factor approximation guarantee is possible \cite{vondrak2008optimal}.

We present \nmscg (\NMSCGPP) for solving stochastic non-monotone DR-submodular functions in Algorithm~\ref{algo_scg++}. Note that many of the steps of \NMSCGPP are similar to the ones for \SCGPP, except feasible set selection in Step 11 which leads to a different update of ascent direction $\vB^t$. In particular, we compute the ascent direction in \NMSCGPP by solving the problem 
\vspace{-1.5mm}
\begin{equation}
\vB^t := \argmax_{\{\vB\in\CM| \vB\leq \bar \uB - \xB^{t}\}}\{\vB^\top\gB^t\},\vspace{-1.5mm}
\end{equation}
where $\gB^t$ is the stochastic gradient approximation at step $t$. This update differs from the update in \eqref{direction_update} for the monotone setting by having the extra constraint $\vB\leq \bar \uB - \xB^{t}$. This extra constraint is required to ensure that the outcome of this linear optimization does not grow aggressively as suggested previously in \cite{feldman2011unified,DBLP:conf/nips/BianL0B17}. In the following theorem, we show that \NMSCGPP obtains a $1/e$-guarantee.

\begin{theorem}
	Consider the \NMSCGPP  method outlined in Algorithm \ref{algo_scg++} and assume that in Step 8 we follow the update in \eqref{eqn_approximate_matrix_vector_product} to construct the gradient difference approximation $\tilde{\Delta}^t$ (Option II).
	If Assumptions \ref{ass_upper_bound_stochastic_function_value}-\ref{ass_function_submodularity}  hold, then the output of  \NMSCGPP  denoted by $\xB^T$ satisfies
	\begin{equation*}
	\EBB \left[F(\xB^{T})\right]\geq (1/e)F(\xB^*) - (4\sqrt{2}+1)/2\cdot\bar LD^2\epsilon,
	\end{equation*}
	by setting $|\MM_0| = \frac{G^2}{2\bar{L}^2D^2\epsilon^2}$, $|\MM| = \frac{1}{2\epsilon}$, $T = \frac{1}{\epsilon}$, and $\delta = \OM(\epsilon^2)$ as in (\ref{eqn_small_delta}).
	Here $\bar{L}$ is a constant defined by $\bar{L}^2 \defi 4B^2G^4 + 16G^4  + 4L^2 + 4B^2L^2$.
	\label{thm_main_nm}
\end{theorem}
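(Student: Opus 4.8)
\textbf{Proof proposal for Theorem~\ref{thm_main_nm} (non-monotone \NMSCGPP).} The plan is to mirror the analysis of \SCGPP (Theorem~\ref{thm_main}), but replace the monotone submodularity lower bound on $\langle \vB^t, \nabla F(\xB^t)\rangle$ by the non-monotone variant that exploits the extra box constraint $\vB \le \bar\uB - \xB^t$ together with the coordinate-wise bound $\xB^t \le \oneB - e^{-t/T}$-type control on the iterate trajectory. First I would establish, using Assumptions~\ref{ass_compact_domain}--\ref{ass_Hessian_continuous} exactly as in Lemma~\ref{lemma_variance_general}, the variance bound $\EBB\|\gB^t - \nabla F(\xB^t)\|^2 \le (1+\epsilon t)\bar L^2 D^2 \epsilon^2$, and since $t \le T = 1/\epsilon$ this gives $\EBB\|\gB^t - \nabla F(\xB^t)\| \le \sqrt{2}\,\bar L D \epsilon$ at every step; the choice of $|\MM_0|$, $|\MM|$, $T$, $\delta$ in the theorem statement is exactly what makes this hold. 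This part is already done for us in the monotone analysis.

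Second, I would carry out the key geometric/combinatorial step: controlling $\langle \vB^t, \nabla F(\xB^t)\rangle$ from below by (a piece of) $F(\xB^*)$. The standard argument (following \cite{feldman2011unified,DBLP:conf/nips/BianL0B17}) goes: let $\xB^* $ be the optimum; since $\xB^t$ is coordinate-wise increasing along the trajectory and each increment has size $1/T$, one shows inductively that $\xB^t \le (1 - (1-1/T)^t)\bar\uB \le \bar\uB$, hence $\vB := (\xB^* \vee \xB^t) - \xB^t = (\xB^* - \xB^t)_+$ is a feasible point of the restricted polytope $\{\vB \in \CM : \vB \le \bar\uB - \xB^t\}$ (using down-closedness of $\CM$). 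By optimality of $\vB^t$ for the linear program with objective $\gB^t$, $\langle \vB^t, \gB^t\rangle \ge \langle \vB, \gB^t\rangle$, and then passing from $\gB^t$ to $\nabla F(\xB^t)$ costs at most $D\cdot\sqrt2\,\bar L D\epsilon$ in expectation by Cauchy--Schwarz and the variance bound. Next, DR-submodularity gives $\langle (\xB^*-\xB^t)_+, \nabla F(\xB^t)\rangle \ge F(\xB^t \vee \xB^*) - F(\xB^t)$, and the crucial non-monotone lemma — for DR-submodular $F$ with $F \ge 0$ and $\xB^t \le (1-(1-1/T)^t)\bar\uB$ — yields $F(\xB^t \vee \xB^*) \ge (1-1/T)^t F(\xB^*)$ (this is the analogue of the bound used in measured continuous greedy). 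Combining, $\langle \vB^t, \nabla F(\xB^t)\rangle \ge (1-1/T)^t F(\xB^*) - F(\xB^t)$.

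Third, I would run the standard continuous-greedy telescoping. By $L$-smoothness of $F$ (from Assumption~\ref{ass_smooth}, which bounds $\|\nabla^2\tilde F\|$ and hence $\|\nabla^2 F\|$) and the update $\xB^{t+1} = \xB^t + \tfrac1T \vB^t$ with $\|\vB^t\|\le D$,
\begin{equation*}
F(\xB^{t+1}) \ge F(\xB^t) + \tfrac1T\langle \vB^t,\nabla F(\xB^t)\rangle - \tfrac{L D^2}{2T^2}
\ge F(\xB^t) + \tfrac1T\big[(1-\tfrac1T)^t F(\xB^*) - F(\xB^t)\big] - \tfrac{LD^2}{2T^2} - \tfrac{\sqrt2 \bar L D^2\epsilon}{T}.
\end{equation*}
Writing $\phi_t := F(\xB^*) - \EBB[F(\xB^t)]$ (or more precisely tracking $\EBB[F(\xB^t)]$ against $(1-(1-1/T)^t)F(\xB^*)$), one gets the recursion $\EBB[F(\xB^{t+1})] \ge (1-\tfrac1T)\EBB[F(\xB^t)] + \tfrac1T(1-\tfrac1T)^t F(\xB^*) - \tfrac1T\big(\tfrac{LD^2}{2T} + \sqrt2\bar L D^2\epsilon\big)$. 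Unrolling from $\xB^0=\zeroB$ with $F(\zeroB)\ge 0$ gives $\EBB[F(\xB^T)] \ge T(1-\tfrac1T)^{T-1}\tfrac1T F(\xB^*) - (\tfrac{LD^2}{2T}+\sqrt2\bar L D^2\epsilon) \ge (1/e)F(\xB^*) - \tfrac{LD^2}{2}\epsilon - \sqrt2\bar L D^2\epsilon$, and since $L \le \bar L$ this is $\ge (1/e)F(\xB^*) - \tfrac{4\sqrt2+1}{2}\bar L D^2\epsilon$, as claimed.

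\textbf{Main obstacle.} The routine telescoping and the variance bound are borrowed wholesale from Theorem~\ref{thm_main}; the genuine work is the non-monotone geometric step — verifying that $(\xB^*-\xB^t)_+$ is feasible for the \emph{restricted} polytope (which is where down-closedness and the trajectory bound $\xB^t \le \bar\uB$ are both used) and invoking the inequality $F(\xB^t\vee\xB^*)\ge(1-1/T)^t F(\xB^*)$ valid for non-monotone DR-submodular functions. I expect the bookkeeping of the error terms from the finite-difference Hessian-vector product (the $\delta=\OM(\epsilon^2)$ choice) and from the gradient-estimation noise to be the most error-prone part, though each individual estimate is a straightforward Cauchy--Schwarz/smoothness argument; everything must be aggregated so that the total slack is $O(\bar L D^2\epsilon)$ with the stated constant $(4\sqrt2+1)/2$.
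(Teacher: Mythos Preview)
Your proposal is correct and follows essentially the same route as the paper: the paper invokes the same variance bound from Lemma~\ref{lemma_variance_general} to get $\EBB\|\gB^t-\nabla F(\xB^t)\|\le\sqrt{2}\bar L D/T$, cites \cite{mokhtari2018stochastic} for the measured-continuous-greedy inequality $F(\xB^{t+1})-F(\xB^t)\ge \tfrac1T[(1-\tfrac1T)^tF(\xB^*)-F(\xB^t)]-\tfrac{2D}{T}\|\nabla F(\xB^t)-\gB^t\|-\tfrac{\bar LD^2}{2T^2}$, and then unrolls the recursion exactly as you do. Two small bookkeeping corrections: (i) in the non-oblivious setting $\nabla^2 F$ is \emph{not} simply $\EBB[\nabla^2\tilde F]$ (it picks up the $\nabla\log p$ terms of Lemma~\ref{lemma_hessian_estimator}), so the smoothness constant of $F$ is $\bar L$, not $L$---this is precisely what Lemma~\ref{lemma_appendix_hessian_spectral_norm} establishes; (ii) swapping $\gB^t$ for $\nabla F(\xB^t)$ on both $\vB^t$ and the comparison point costs $2D\|\gB^t-\nabla F(\xB^t)\|$, not $D\|\cdot\|$, which is what produces the $4\sqrt2$ in the final constant rather than the $2\sqrt2$ your arithmetic would give.
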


The result in Theorem \ref{thm_main_nm} implies that after at most $T=1/\epsilon$ iterations the objective function value for the output of \NMSCGPP  is at least $(1/e)\OPT -\mathcal{O}(\epsilon)$. As the number of calls to the stochastic oracle per iteration is of $\mathcal{O}(1/\epsilon)$, to reach a $(1/e)\OPT -\mathcal{O}(\epsilon)$ approximation guarantee \NMSCGPP has an overall stochastic first-order oracle complexity of $\mathcal{O}(1/\epsilon^2)$ with $\mathcal{O}(1/\epsilon)$ calls to a linear optimization oracle.


\section{Stochastic Discrete Submodular Maximization} \label{discrete}

In this section, we focus on extending our result to the case where $F$ is the multilinear extension of a discrete submodular function $f$. This is also an instance of the non-oblivious stochastic optimization \eqref{eqn_DR_submodular_maximization_loss}.
Indeed, once such a result is achieved, with a proper rounding scheme such as pipage rounding \cite{calinescu2007maximizing} or contention resolution method \cite{chekuri2014submodular}, we can obtain discrete solutions. 
Let $V$ denote a finite set of $d$ elements, i.e., $V = \{1, \ldots, d\}$.
Consider a discrete submodular function $f: 2^V \rightarrow \RBB_+$, which is defined as an \emph{expectation} over a set of functions $f_\gamma:2^{V} \rightarrow \RBB_+$. Our goal is to maximize $f$ subject to some constraint $\mathcal{I}$, where the $\mathcal{I}$ is a collection of the subsets of $V$, i.e., we aim to solve the following discrete and stochastic submodular function maximization problem
\vspace{-1mm}
\begin{equation} \label{eqn_stochastic_submodular_function_definition}
\max_{S \in \mathcal{I}} f(S) := \max_{S \in \mathcal{I}} \EBB_{\gamma \sim p(\gamma)}[f_\gamma(S)],
\vspace{-1mm}
\end{equation}
where $p(\gamma)$ is an arbitrary distribution. 
In particular, we assume the pair $M = \{V, \mathcal{I}\}$ forms a matroid with rank $r$.
The prototypical example is maximization under the cardinality constraint, i.e., for a given integer $r$, find $S \subseteq V$, $|S|\leq r$, which maximizes $f$. The challenge here is to find a  solution with near-optimal quality for the problem in \eqref{eqn_stochastic_submodular_function_definition} without computing the expectation in \eqref{eqn_stochastic_submodular_function_definition}. That is, we assume access to an oracle that, given a set $S$, outputs an independently chosen sample $f_\gamma(S)$ where $\gamma\sim p(\gamma)$.   The focus of this section is on extending our results into the discrete domain and showing that \SCGPP can be applied for maximizing a stochastic submodular set function $f$, namely Problem \eqref{eqn_stochastic_submodular_function_definition}, through the multilinear extension of the function $f$.
Specifically, in lieu of solving \eqref{eqn_stochastic_submodular_function_definition} we can solve its continuous extension 
 \vspace{-1mm}
\begin{equation} \label{eqn_multilinear_extension_problem}
 \max_{\xB\in \mathcal{C}}\ F(\xB),\vspace{-1mm}
\end{equation} 
 where  $F: [0,1]^V \to \mathbb{R}_+$ is the multilinear extension of $f$ and is defined as 
\begin{equation} \label{multilinear} 
	F(\xB) \ :=\ \sum_{S \subseteq V } f(S) \prod_{i \in S} x_i \prod_{j \notin S} (1-x_j) \ =\ \sum_{S \subseteq V } \EBB_{\gamma \sim p(\gamma)}[f_\gamma(S)] \prod_{i \in S} x_i \prod_{j \notin S} (1-x_j),
\end{equation} 
and the convex set $\mathcal{C}=\text{conv}\{1_{I}: I\in \mathcal{I} \}$ is the matroid polytope \cite{calinescu2007maximizing}. Note that here $x_i $ denotes the $i$-th component of $\xB$. In other words, $F(\xB)$ is the expected value of $f$ over sets wherein each element $i$ is included with probability $x_i$ independently.

To solve the multilinear extension problem in \eqref{multilinear} using \SCGPP (for the monotone case) and \NMSCGPP (for the non-monotone case), we need access to unbiased estimators of the gradient and the Hessian. In the following lemma, we first study the structure of the Hessian of the objective function \eqref{multilinear}.

\vspace{-1mm}

\begin{lemma}[\cite{calinescu2007maximizing}]
Recall the definition of $F$ in \eqref{multilinear} as the multilinear extension of the set function $f$ in \eqref{eqn_stochastic_submodular_function_definition}. Then, for $i =j$ we have $[\nabla^2 F(\yB)]_{i,j} = 0$, and for $i\neq j$ 
\vspace{-1mm}
		\begin{align}	\label{eqn_multilinear_extension_lemma}
		[\nabla^2 F(\yB)]_{i,j}= &F(\yB; \yB_i\leftarrow 1, \yB_j \leftarrow 1) - F(\yB; \yB_i\leftarrow 1, \yB_j \leftarrow 0) \nonumber\\
		&- F(\yB; \yB_i\leftarrow 0, \yB_j \leftarrow 1) + F(\yB; \yB_i\leftarrow 0, \yB_j \leftarrow 0),\vspace{-1mm}
		\end{align}
	where the vector $\yB;\yB_i\leftarrow c_i, \yB_j\leftarrow c_j$ is defined as a vector that the $i^{th}$ and $j^{th}$ entries of $\yB$ is set to $c_i$ and $c_j$, respectively. 
	\label{lemma_multilinear_extension_lemma}
\end{lemma}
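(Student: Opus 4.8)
The plan is to compute the second-order partial derivatives of the multilinear extension directly from its definition in \eqref{multilinear}, exploiting the multilinear structure of $F$ in each coordinate. Recall that
\begin{equation*}
F(\xB) = \sum_{S \subseteq V} f(S) \prod_{i \in S} x_i \prod_{j \notin S}(1-x_j).
\end{equation*}
For the diagonal entries, fix an index $i$ and observe that every term in the sum is \emph{affine} (degree at most one) in the variable $x_i$: the factor associated with $i$ is either $x_i$ (if $i \in S$) or $1 - x_i$ (if $i \notin S$). Consequently $\partial F / \partial x_i$ does not depend on $x_i$, and therefore $[\nabla^2 F(\yB)]_{i,i} = \partial^2 F/\partial x_i^2 = 0$. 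This handles the $i = j$ case immediately.

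For the off-diagonal entries with $i \neq j$, I would partition the family of subsets $S \subseteq V$ according to whether each of $i$ and $j$ belongs to $S$, writing $S$ uniquely as $T \cup (S \cap \{i,j\})$ with $T \subseteq V \setminus \{i,j\}$. Grouping the sum in \eqref{multilinear} this way gives
\begin{equation*}
F(\xB) = \sum_{T \subseteq V \setminus \{i,j\}} \Big( \prod_{k \in T} x_k \prod_{\ell \notin T \cup \{i,j\}} (1-x_\ell) \Big) \Big[ x_i x_j\, f(T \cup \{i,j\}) + x_i(1-x_j)\, f(T \cup \{i\}) + (1-x_i)x_j\, f(T \cup \{j\}) + (1-x_i)(1-x_j)\, f(T) \Big].
\end{equation*}
Now differentiating once in $x_i$ and once in $x_j$ annihilates the outer bracket's dependence is absent on $i,j$, and inside the square bracket the mixed partial $\partial^2/\partial x_i \partial x_j$ of $x_i x_j$ is $1$, of $x_i(1-x_j)$ is $-1$, of $(1-x_i)x_j$ is $-1$, and of $(1-x_i)(1-x_j)$ is $1$. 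Therefore
\begin{equation*}
[\nabla^2 F(\yB)]_{i,j} = \sum_{T \subseteq V \setminus \{i,j\}} \Big( \prod_{k \in T} y_k \prod_{\ell \notin T \cup \{i,j\}} (1-y_\ell) \Big) \big[ f(T \cup \{i,j\}) - f(T \cup \{i\}) - f(T \cup \{j\}) + f(T) \big].
\end{equation*}
The final step is to recognize that each of the four sums over $T$, weighted by the same product of $y_k$ and $(1-y_\ell)$ over coordinates other than $i,j$, is exactly the multilinear-extension-style expectation of $f$ with the $i$-th and $j$-th coordinates clamped to the appropriate values; that is, $\sum_T (\prod y_k)(\prod(1-y_\ell)) f(T \cup \{i\}) = F(\yB; \yB_i \leftarrow 1, \yB_j \leftarrow 0)$, and similarly for the other three. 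Substituting these identifications yields \eqref{eqn_multilinear_extension_lemma}.

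I do not anticipate a serious obstacle here: the entire argument is bookkeeping around the fact that $F$ is affine in each coordinate separately, so the only care needed is in organizing the subset sum cleanly (the $T \subseteq V \setminus \{i,j\}$ decomposition) and correctly matching the four clamped sums to the four evaluations $F(\yB; \yB_i \leftarrow c_i, \yB_j \leftarrow c_j)$. If anything, the mildly delicate point is making the notation $F(\yB; \yB_i \leftarrow c_i, \yB_j \leftarrow c_j)$ precise enough that the identification of each partial sum with such an evaluation is transparent — but this is definitional rather than substantive. Since the lemma is attributed to \cite{calinescu2007maximizing}, one could alternatively just cite it, but the self-contained computation above is short enough to include.
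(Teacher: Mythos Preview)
Your proof is correct and self-contained. The paper, however, takes a different route: rather than differentiating the multilinear sum directly, it leverages the non-oblivious Hessian estimator developed earlier in the paper. Specifically, the paper writes the multilinear extension as an expectation over Bernoulli variables (Section~\ref{section_multilinear_to_non_oblivious}), applies its general formula for $\tilde{\nabla}^2 F(\yB;\zB,\gamma)$ in \eqref{eqn_unbiased_second_order_differential_estimator}, simplifies it to \eqref{eqn_multilinear_extension_Hessian_estiamtor_1} using $\nabla\tilde F = 0$ and $[\nabla\log\text{Ber}]^2 + \nabla^2\log\text{Ber}=0$, and then computes $\EBB_{\zB}[\tilde{\nabla}^2 F(\yB;\zB,\gamma)]_{i,j}$ by summing over the four configurations $(c_i,c_j)\in\{0,1\}^2$ via the identity $\xB_i^{c_i}(1-\xB_i)^{1-c_i}\nabla_{\xB_i}\log\text{Ber}(c_i;\xB_i) = -(-1)^{c_i}$. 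Since the estimator is unbiased, this expectation equals $[\nabla^2 F(\yB)]_{i,j}$, yielding \eqref{eqn_multilinear_extension_lemma}.

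Your direct-differentiation argument is more elementary and does not rely on any of the paper's stochastic-estimation machinery; it is closer in spirit to the original proof in \cite{calinescu2007maximizing}. The paper's approach buys consistency with its broader framework: it simultaneously verifies that the general Hessian estimator \eqref{eqn_unbiased_second_order_differential_estimator} reproduces the classical formula in this special case, which is useful for motivating the concrete estimator \eqref{eqn_multilinear_extension_hessian_stochastic_ineqj} used downstream.
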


\vspace{-1mm}

Note that each term in \eqref{eqn_multilinear_extension_lemma} is an expectation which can be estimated in a bias-free manner by direct sampling.
We will now construct the Hessian approximation $\tilde{\nabla}_{t}^2 $ using Lemma \ref{lemma_multilinear_extension_lemma}.
Let $a$ be a uniform random variable between $[0, 1]$ and let $\eB = (e_1, \cdots, e_d)$ be a random vector in which $e_i$'s are generated i.i.d. according to the uniform distribution over the unit interval $[0,1]$.
In each iteration, a minibatch $\MM$ of $|\MM|$ samples of $\{a, \eB, \gamma\}$ (recall that $\gamma$ is the random variable that parameterizes the  component function $f_\gamma$), i.e., $\MM = \{a_k, \eB_k, \gamma_k\}_{k=1}^{|\MM|}$, is generated.
Then for all $k \in [|\MM|]$, we let $\xB_{a_k} = a_k\xB^{t} + (1-a_k)\xB^{t-1}$ and construct the random set $S(\xB_{a_k},\eB_k)$ using $\xB_{a_k}$ and $\eB_k$ in the following way: $s \in S(\xB_{a_k},\eB_k)$ if and only if $[\eB_k]_s \leq [\xB_{a_k}]_s$ for $s\in[d]$.
Having $S(\xB_{a_k},\eB_k)$ and $\gamma_k$, each entry of the Hessian estimator $\tilde{\nabla}_{t}^2 \in \RBB^{d\times d}$ is 
\vspace{-2mm}
\begin{equation}
\begin{aligned}
\left[\tilde{\nabla}_{t}^2 \right]_{i,j} = \frac{1}{|\MM|}\sum_{k\in[|\MM|]}f_{\gamma_k}(S(\xB_{a_k},\eB_k) \cup \{i, j\}) - f_{\gamma_k}(S(\xB_{a_k},\eB_k) \cup \{i\}\setminus \{j\}) \\
- f_{\gamma_k}(S(\xB_{a_k},\eB_k) \cup \{j\}\setminus \{i\}) + f_{\gamma_k}(S(\xB_{a_k},\eB_k) \setminus \{i,j\}),
\end{aligned}\vspace{-1mm}
\label{eqn_multilinear_extension_hessian_stochastic_ineqj}
\end{equation}
where $i\neq j$, and if $i = j$ then $[\tilde{\nabla}_{t}^2 ]_{i,j} = 0$.
As linear optimization over the rank-$r$ matroid polytope returns $\vB^t$ with at most $r$ nonzero entries, the complexity of computing \eqref{eqn_multilinear_extension_hessian_stochastic_ineqj} is $\OM(rd)$.
Now we use the above approximation of Hessian  to solve the multilinear extension as a special case of Problem \eqref{eqn_DR_submodular_maximization_loss} using \SCGPP and \NMSCGPP. To do so, we first introduce the following definitions.
\vspace{-1mm}
\begin{definition} \label{ass_bounded_marginal_return}
	Let $D_\gamma$ denote the maximum marginal value of $f_\gamma$, i.e., 
	$D_\gamma = \max_{i \in V} f_\gamma({i})$, and further define $D_f = ({\EBB_{\gamma} [D_\gamma^2]})^{1/2}$.
\end{definition}
\vspace{-1mm}

Based on Definition \ref{ass_bounded_marginal_return}, the Hessian estimator $\tilde{\nabla}_{t}^2$ has a bounded $\|\cdot\|_{2, \infty}$ norm:
$\EBB [\|\tilde{\nabla}_{t}^2\|_{2,\infty}^2] = \EBB [\max_{i \in [d]} \|\tilde{\nabla}_{t}^2(:, i)\|^2]\leq 4d\cdot\EBB_\gamma D_\gamma^2 = 4d\cdot D_f^2$.

\subsection{Convergence results}
We first analyze the convergence of \SCGPP for solving Problem \eqref{eqn_multilinear_extension_problem} when $f$ is monotone. Compared to Theorem \ref{thm_main}, Theorem \ref{thm:multi} has a dependency on the problem dimension $d$ and exploits the sparsity of $\vB^t$.
\lightblue{Note that this result is presented as Theorem 2 in \cite{NIPS2019_9466}.}

\begin{theorem}\label{thm:multi}
	Consider the multilinear extension of a monotone stochastic submodular set function and recall the definition of $D_f$.
	By using the minibatch size $|\MM| = \OM({\sqrt{r^3d}D_f}/{\epsilon})$ and $|\MM_0| = \OM({\sqrt{d}D_f}/{\sqrt{r}\epsilon^2})$, \SCGPP finds a $[(1-1/e)OPT - 6\epsilon]$ approximation of the multilinear extension problem at most $({\sqrt{r^3d}D_f}/{\epsilon})$ iterations. Moreover, the overall stochastic oracle cost is $\OM({r^3 d D_f^2}/{\epsilon^2})$.
\end{theorem}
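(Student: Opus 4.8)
The plan is to obtain Theorem~\ref{thm:multi} as a dimension- and rank-aware specialization of the generic \SCGPP guarantee (Theorem~\ref{thm_main}): the continuous-greedy descent argument is reused essentially verbatim, and only the variance accounting for the gradient estimator $\gB^{t}$ is redone, replacing the generic spectral-norm and diameter constants by quantities that scale correctly in the ambient dimension $d$ and the matroid rank $r$. Recall the two ingredients behind Theorem~\ref{thm_main}. The first is a per-step inequality for a monotone DR-submodular $F$: by $L$-smoothness, $F(\xB^{t+1})\ge F(\xB^{t})+\frac1T\langle\vB^{t},\nabla F(\xB^{t})\rangle-\frac{L}{2T^{2}}\|\vB^{t}\|^{2}$; by optimality of $\vB^{t}$ in \eqref{direction_update} and monotone DR-submodularity ($\langle\xB^{*},\nabla F(\xB^{t})\rangle\ge F(\xB^{*})-F(\xB^{t})$), this yields $F(\xB^{t+1})-F(\xB^{t})\ge\frac1T(F(\xB^{*})-F(\xB^{t}))-\frac1T\langle\vB^{t}-\xB^{*},\nabla F(\xB^{t})-\gB^{t}\rangle-\frac{L}{2T^{2}}\|\vB^{t}\|^{2}$; unrolling the induced recursion for $F(\xB^{*})-F(\xB^{t})$ and using $F(\xB^{0})=F(\mathbf{0})\ge0$ gives $\EBB[F(\xB^{T})]\ge(1-1/e)F(\xB^{*})-\frac1T\sum_{t}\EBB|\langle\vB^{t}-\xB^{*},\nabla F(\xB^{t})-\gB^{t}\rangle|-\frac{L}{2T}\max_{t}\|\vB^{t}\|^{2}$. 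The only way the geometry of $\CM$ enters is through $\vB^{t}-\xB^{*}$ and $\|\vB^{t}\|$, so for the matroid polytope I would simply record that $\vB^{t}$ is a vertex, hence $0/1$-valued with at most $r$ nonzeros ($\|\vB^{t}\|_{1}\le r$, $\|\vB^{t}\|^{2}\le r$), and that $\xB^{*}$ lies in the polytope, so $\|\xB^{*}\|_{1}\le r$ and $\|\xB^{*}\|^{2}\le r$; in particular $|\langle\vB^{t}-\xB^{*},w\rangle|\le 2\sqrt{r}\,\|w\|$ for any $w$, and $\max_{t}\|\vB^{t}\|^{2}\le r$.

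The heart of the argument is the second ingredient, the variance bound for $\gB^{t}$, which I would re-derive from the explicit combinatorial Hessian estimator \eqref{eqn_multilinear_extension_hessian_stochastic_ineqj} rather than from generic smoothness. Three facts are needed. First, by Lemma~\ref{lemma_multilinear_extension_lemma} each entry of \eqref{eqn_multilinear_extension_hessian_stochastic_ineqj} is an unbiased one-sample estimate of the corresponding entry of $\nabla^{2}F(\xB(a))$, and averaging over the uniform $a$ makes $\tilde\nabla^{2}_{t}$ exactly an estimator of $\int_{0}^{1}\nabla^{2}F(\xB(a))\,\dB a$ in the sense of \eqref{eqn_integral_estimator}; hence the error decomposition used in the variance analysis of \FWPP (Lemma~\ref{lemma_variance_general}), $\gB^{t}-\nabla F(\xB^{t})=(\gB^{t-1}-\nabla F(\xB^{t-1}))+(\tilde\Delta^{t}-\Delta^{t})$ with $\EBB[\tilde\Delta^{t}-\Delta^{t}\mid\mathcal F_{t-1}]=0$, goes through unchanged. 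Second, the increment is $\xB^{t}-\xB^{t-1}=\vB^{t-1}/T$ with $\|\vB^{t-1}\|_{1}\le r$, so $\|\xB^{t}-\xB^{t-1}\|_{1}\le r/T$. Third, $\|M\dB\|\le\|\dB\|_{1}\|M\|_{2,\infty}$ for any matrix $M$ and vector $\dB$, and $\EBB\|\tilde\nabla^{2}_{t}\|_{2,\infty}^{2}\le 4dD_{f}^{2}$ --- this is precisely the bound recorded after Definition~\ref{ass_bounded_marginal_return}, and it already holds for a single sample since each column of \eqref{eqn_multilinear_extension_hessian_stochastic_ineqj} has at most $d$ nonzero entries, each a second difference of marginal gains of magnitude at most $2D_{\gamma}$. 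Combining the last two facts, a single-sample gradient-difference estimator $\tilde\Delta^{t}_{k}$ (the product of the $k$-th summand of \eqref{eqn_multilinear_extension_hessian_stochastic_ineqj} with $\xB^{t}-\xB^{t-1}$) satisfies $\EBB\|\tilde\Delta^{t}_{k}\|^{2}\le(r/T)^{2}\cdot 4dD_{f}^{2}$; since $\tilde\Delta^{t}$ is the i.i.d.\ average of $|\MM|$ such vectors in $\RBB^{d}$, its conditional variance is at most $\frac{1}{|\MM|}(r/T)^{2}\cdot 4dD_{f}^{2}$. Telescoping the identity above, the cross terms vanish and $\EBB\|\gB^{t}-\nabla F(\xB^{t})\|^{2}\le\EBB\|\gB^{0}-\nabla F(\mathbf{0})\|^{2}+t\cdot\frac{4r^{2}dD_{f}^{2}}{|\MM|T^{2}}$, with the first term at most $dD_{f}^{2}/|\MM_{0}|$ by the same reasoning applied to $\nabla F(\mathbf{0})$ (whose one-sample estimate has coordinates $f_{\gamma}(\{i\})-f_{\gamma}(\emptyset)$, each of magnitude at most $D_{\gamma}$).

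It then remains to balance the parameters. Plugging the variance bound into the descent inequality of the first paragraph gives $\EBB[F(\xB^{T})]\ge(1-1/e)F(\xB^{*})-\frac{2\sqrt{r}}{T}\sum_{t}\sqrt{\EBB\|\gB^{t}-\nabla F(\xB^{t})\|^{2}}-\frac{Lr}{2T}$, and using subadditivity of $\sqrt{\cdot}$ together with $\sum_{t\le T}\sqrt{t}\le T^{3/2}$ the middle term is at most $2\sqrt{rd}\,D_{f}/\sqrt{|\MM_{0}|}+\frac{4r^{3/2}\sqrt{d}\,D_{f}}{\sqrt{|\MM|T}}$. Taking $T=|\MM|=\Theta(\sqrt{r^{3}d}\,D_{f}/\epsilon)$ makes the accumulated-variance part and $\frac{Lr}{2T}$ both $\OM(\epsilon)$; taking $|\MM_{0}|$ of order $1/\epsilon^{2}$ as stated in the theorem makes the initial-variance part $\OM(\epsilon)$; the finite-difference error of Option~II is harmless for $\delta=\OM(\epsilon^{2})$ exactly as in Section~\ref{sec:hessian_product}. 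Collecting the $\OM(\epsilon)$ terms with their explicit constants gives the claimed $(1-1/e)\OPT-6\epsilon$ after $T=\OM(\sqrt{r^{3}d}\,D_{f}/\epsilon)$ iterations. Finally, the total stochastic oracle cost is $T|\MM|+|\MM_{0}|=\OM(r^{3}dD_{f}^{2}/\epsilon^{2})$, and since $\vB^{t-1}$ has at most $r$ nonzero coordinates only $\OM(r)$ columns of $\tilde\nabla^{2}_{t}$ are probed by the matrix--vector product, so each sample of \eqref{eqn_multilinear_extension_hessian_stochastic_ineqj} costs $\OM(rd)$.

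The step I expect to be the main obstacle is exactly this dimension/rank-aware variance accounting. Running Theorem~\ref{thm_main} as a black box would use the spectral norm $\|\tilde\nabla^{2}_{t}\|$, which for the combinatorial estimator \eqref{eqn_multilinear_extension_hessian_stochastic_ineqj} can exceed $\|\tilde\nabla^{2}_{t}\|_{2,\infty}$ by a factor of order $\sqrt{d}$, yielding a strictly worse rate; the real work is to verify that the entire variance recursion, and every appeal to the geometry of $\CM$ inside the continuous-greedy descent, survives when the pairing ``$\ell_{2}$-increment against spectral Hessian'' is replaced by ``$\ell_{1}$-increment against $\|\cdot\|_{2,\infty}$-Hessian'', and then to chase the $d$, $r$, $D_{f}$ factors cleanly through the parameter choices. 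A secondary and more routine point is to confirm, via Lemma~\ref{lemma_multilinear_extension_lemma}, the unbiasedness of \eqref{eqn_multilinear_extension_hessian_stochastic_ineqj} as an estimator of the integrated Hessian, so that the cross terms in the telescoping identity indeed vanish and $\gB^{t}$ remains a bona fide unbiased, variance-reduced gradient estimate.
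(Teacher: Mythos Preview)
Your approach is essentially the paper's: the key step is precisely the dimension/rank-aware variance bound you derive via the $\ell_1$-increment / $\|\cdot\|_{2,\infty}$-Hessian pairing (this is the content of Lemma~\ref{kashk} in the paper, $\EBB\|\gB^t-\nabla F(\xB^t)\|^2 \le 4r^2d\,\epsilon D_f^2/|\MM| + dD_f^2/|\MM_0|$ with $\epsilon=1/T$), which is then fed into the standard continuous-greedy recursion. The one ingredient you leave implicit is the value of the smoothness constant $L$ for the multilinear extension: the paper records separately (inequality~\eqref{eqn_proof_discrete}) that $F(\xB^{t+1})-F(\xB^t)-\langle\nabla F(\xB^t),\xB^{t+1}-\xB^t\rangle \ge -\tfrac12\sqrt{rd}\,D_f\,\|\xB^{t+1}-\xB^t\|^2$, i.e.\ effectively $L=\sqrt{rd}\,D_f$, which is exactly what your curvature term $\tfrac{Lr}{2T}$ needs in order to be $\OM(\epsilon)$ under the stated $T$; this bound follows from the same $\|\nabla^2 F\|_{2,\infty}\le 2\sqrt{d}\,D_f$ estimate combined with $\|\vB^t\|_1\le r$ and $\|\vB^t\|_2\le\sqrt{r}$, so it is not an additional idea but you should state it explicitly.
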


Since the cost of a single stochastic gradient computation is $\OM(d)$, Theorem \ref{thm:multi} shows that the overall computation complexity of Algorithm \ref{algo_scg++} is $\OM({d^2}/{\epsilon^2})$.
Note that, in the multilinear extension case, the {{smoothness Assumption \ref{ass_smooth}}} required for the results in Section~\ref{section_scgpp} is absent, and that is why we need to develop a more sophisticated gradient-difference estimator to achieve a similar theoretical guarantee.


%

\begin{remark}[optimality of oracle complexities]
	In order to achieve the tight  $1-1/e-\epsilon$ approximation, the stochastic oracle complexity $\OM({1}/{\epsilon^2})$, obtained in Theorem  \ref{thm:multi}, is optimal in terms of its dependency on $\epsilon$. A lower bound on the stochastic oracle complexity is given in the following section.
\end{remark}

We proceed to derive our result for stochastic and non-monotone discrete submodular function maximization. 

\begin{theorem}\label{thm:nm-multi}
	Consider the multilinear extension of a non-monotone stochastic submodular set function and recall the definition of $D_f$.
	By using the minibatch size $|\MM| = \OM({\sqrt{r^3d}D_f}/{\epsilon})$ and $|\MM_0| = \OM({\sqrt{d}D_f}/{\sqrt{r}\epsilon^2})$, \NMSCGPP finds a $[(1/e)OPT - \epsilon]$ approximation of the multilinear extension problem at most $({\sqrt{r^3d}D_f}/{\epsilon})$ iterations. Moreover, the overall stochastic oracle cost is $\OM({r^3 d D_f^2}/{\epsilon^2})$.
\end{theorem}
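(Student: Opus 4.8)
The plan is to run the argument that proves Theorem~\ref{thm_main_nm} (the generic non-monotone \NMSCGPP guarantee) but to replace every step that invokes the spectral-norm smoothness Assumption~\ref{ass_smooth} — which does \emph{not} hold for the multilinear extension — by an estimate tailored to the structure in Lemma~\ref{lemma_multilinear_extension_lemma} and the Hessian estimator~\eqref{eqn_multilinear_extension_hessian_stochastic_ineqj}. Two features of the discrete problem do the work: the $\|\cdot\|_{2,\infty}$ bound $\EBB[\|\tilde{\nabla}_t^2\|_{2,\infty}^2]\le 4d D_f^2$ recorded just before the convergence results, and the fact that a linear optimization over the rank-$r$ matroid polytope returns $\vB^t$ with at most $r$ nonzero coordinates, so that $\|\vB^t\|_1\le r$ and $\|\vB^t\|_2\le\sqrt r$.

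First I would re-establish the variance recursion of Lemma~\ref{lemma_variance_general} in this setting. Since in \NMSCGPP one has $\xB^{t}-\xB^{t-1}=\vB^{t-1}/T$ (see \eqref{var_update}), the estimator \eqref{eqn_gradient_difference_estimator} reads $\tilde{\Delta}^t=\tilde{\nabla}_t^2\vB^{t-1}/T$; using $\|A\vB\|\le\|A\|_{2,\infty}\|\vB\|_1$ and $\|\vB^{t-1}\|_1\le r$, together with the $1/|\MM|$ shrinkage from averaging i.i.d.\ unbiased samples over the minibatch $\MM$, gives a per-step variance bound $\EBB\|\tilde{\Delta}^t-\Delta^t\|^2=O\big(r^2 d D_f^2/(T^2|\MM|)\big)$. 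Conditioning on the history through the choice of $\xB^t$ makes $\tilde{\Delta}^t$ an unbiased estimate of $\Delta^t=\nabla F(\xB^t)-\nabla F(\xB^{t-1})$, so the cross term in $\|\gB^t-\nabla F(\xB^t)\|^2=\|\gB^{t-1}-\nabla F(\xB^{t-1})\|^2+2\langle\gB^{t-1}-\nabla F(\xB^{t-1}),\tilde{\Delta}^t-\Delta^t\rangle+\|\tilde{\Delta}^t-\Delta^t\|^2$ vanishes in expectation; telescoping and adding the initialization variance $\EBB\|\gB^0-\nabla F(\xB^0)\|^2=O(d D_f^2/|\MM_0|)$ yields
\begin{equation*}
\EBB\|\gB^t-\nabla F(\xB^t)\|^2 \;=\; O\!\left(\frac{d D_f^2}{|\MM_0|}+\frac{t\,r^2 d D_f^2}{T^2|\MM|}\right),\qquad t\le T.
\end{equation*}
Substituting $T=|\MM|=\OM(\sqrt{r^3 d}\,D_f/\epsilon)$ and $|\MM_0|=\OM(\sqrt{d}\,D_f/(\sqrt r\,\epsilon^2))$ drives the right-hand side down to $O(\epsilon^2/r)$ uniformly in $t\le T$.

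Next I would feed this bound into the non-monotone continuous-greedy progress argument used for Theorem~\ref{thm_main_nm}. That argument combines (i) a second-order Taylor estimate of $F(\xB^{t+1})-F(\xB^t)$ along the $1/T$-sized step, whose curvature term $\tfrac{1}{2T^2}|(\vB^t)^\top\nabla^2 F(\tilde\xB)\vB^t|$ is now controlled using $\|\vB^t\|_1^2\le r^2$ and the entrywise Hessian bound from Lemma~\ref{lemma_multilinear_extension_lemma} (again no spectral smoothness is invoked), with (ii) down-closedness of $\CM$ and DR-submodularity, which via the restricted direction $\vB^t\le\bar{\uB}-\xB^t$ lower-bound $\langle\vB^t,\nabla F(\xB^t)\rangle$ in terms of $F(\xB^*)$ and $F(\xB^t)$, up to the estimation error $\langle\vB^t,\gB^t-\nabla F(\xB^t)\rangle\le\|\vB^t\|_2\,\|\gB^t-\nabla F(\xB^t)\|\le\sqrt r\cdot O(\epsilon/\sqrt r)=O(\epsilon)$ in expectation by Cauchy--Schwarz and the variance bound above. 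Setting up the resulting recursion on $\EBB[F(\xB^t)]$ and solving it exactly as in the proof of Theorem~\ref{thm_main_nm} — the $1/e$ factor arising from $(1-1/T)^T\to 1/e$ — gives $\EBB[F(\xB^T)]\ge(1/e)F(\xB^*)-O(\epsilon)$, since the curvature and estimation errors each sum to $O(\epsilon)$ over $T=\OM(\sqrt{r^3 d}\,D_f/\epsilon)$ iterations. The oracle count is then $T|\MM|+|\MM_0|=\OM(r^3 d D_f^2/\epsilon^2)$, and since each evaluation of \eqref{eqn_multilinear_extension_hessian_stochastic_ineqj} needs only the $\le r$ nonzero coordinates of $\vB^t$ it costs $\OM(rd)$, which also recovers the $\OM(d^2/\epsilon^2)$ total arithmetic complexity noted after Theorem~\ref{thm:multi}.

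The main obstacle is exactly that the off-the-shelf non-monotone analysis of Theorem~\ref{thm_main_nm} is written under Assumption~\ref{ass_smooth}, which fails for the multilinear extension; the real work is re-deriving the variance recursion and the progress lemma with $\|\cdot\|_{2,\infty}$ in place of the spectral norm and with $\|\vB^t\|_1\le r$, $\|\vB^t\|_2\le\sqrt r$ threaded through every inequality, and it is precisely this bookkeeping that dictates the particular scaling of $T$, $|\MM|$, and $|\MM_0|$ with $r$ and $d$ needed to keep every accumulated error term at $O(\epsilon)$.
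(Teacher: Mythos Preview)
Your proposal is correct and follows essentially the same route as the paper. The paper's proof likewise (i) invokes the multilinear Taylor estimate \eqref{eqn_proof_discrete} in place of spectral smoothness, (ii) cites the non-monotone measured-continuous-greedy progress inequality (Appendix~H of \cite{mokhtari2018stochastic}) to obtain a recursion of the form $F(\xB^{t+1})-F(\xB^{t})\ge T^{-1}[(1-T^{-1})^tF(\xB^*)-F(\xB^t)]-O(r/T)\|\nabla F(\xB^t)-\gB^t\|-O(\sqrt{r^3d}\,D_f/T^2)$, (iii) plugs in the variance bound of Lemma~\ref{kashk} (your $\|\cdot\|_{2,\infty}$-plus-$\|\vB^t\|_1\le r$ recursion is exactly that lemma), and (iv) unrolls via the $(1-1/T)^{-(t+1)}$ multiplication to extract the $1/e$ factor.

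Two small points. First, the paper pairs the estimation error with the factor $r$ (effectively $\|\vB^t\|_1$) rather than your sharper $\sqrt{r}=\|\vB^t\|_2$; either works, but the constants land differently. Second, your arithmetic check that the stated $|\MM_0|=\OM(\sqrt{d}\,D_f/(\sqrt{r}\,\epsilon^2))$ forces the initialization variance $dD_f^2/|\MM_0|$ down to $O(\epsilon^2/r)$ does not go through as written (it gives $O(\sqrt{rd}\,D_f\,\epsilon^2)$); the paper's own proof in fact sets $|\MM_0|$ at the scale $1/(r^2\epsilon'^2)$ with $\epsilon'=1/T$, which after substituting $T=\sqrt{r^3d}\,D_f/\epsilon$ is $\Theta(rdD_f^2/\epsilon^2)$ and still fits within the claimed $\OM(r^3dD_f^2/\epsilon^2)$ oracle budget. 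This is a bookkeeping slip inherited from the theorem statement, not a flaw in your strategy.
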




\subsection{Lower Bound}

In this section, we show that reaching a $(1-1/e - \epsilon)$-optimal solution of Problem~\eqref{eqn_DR_submodular_maximization_loss} when $F$ is a monotone DR-submodular function requires at least $\mathcal{O}(1/\epsilon^2)$ calls to an oracle that provides stochastic first-order information. To do so, we first construct a stochastic submodular set function $f$, defined as $f(S) = \EBB_{\gamma \sim p(\gamma)}[f_\gamma(S)]$, with the following property: Obtaining a $(1-1/e - \epsilon)$-optimal solution for maximization of $f$ under a cardinality constraint requires at least $\mathcal{O}(1/\epsilon^2)$ samples of the form $f_\gamma(\cdot)$ where $\gamma$ is generated i.i.d from distribution $p$.  Such a lower bound on sample complexity can be directly extended to Problem~\eqref{eqn_DR_submodular_maximization_loss} with a stochastic first order oracle, by considering the multilinear extension of the function $f$, denoted by $F$, 
and noting that (i) problems~\eqref{eqn_stochastic_submodular_function_definition} and \eqref{eqn_multilinear_extension_problem} have the same optimal values, and (ii) one can construct an unbiased estimator of the gradient of the multilinear extension using $d$ independent samples from the underlying stochastic set function $f$.  Hence, any method for maximizing \eqref{eqn_multilinear_extension_problem} is also a method for maximizing \eqref{eqn_stochastic_submodular_function_definition} with the same guarantees on the quality of the solution and with sample complexities that differ at most by a factor of $d$. Now formalize the above argument. \lightblue{We note that this result is presented as Theorem 3 in \cite{NIPS2019_9466}}.

\begin{theorem} \label{sub_lower}
There exists a distribution $p(\gamma)$ and a monotone submodular function $f: 2^V \to \mathbb{R}_+$, given as $ f(S) = \EBB_{\gamma \sim p(\gamma)}[f_\gamma(S)]$, such that the following holds:  In order to find a $(1-1/e - \epsilon)$-optimal solution for \eqref{eqn_stochastic_submodular_function_definition} with $k$-cardinality constraint,  any algorithm requires at least $\min\{\exp(\alpha k), {\beta}/{\epsilon^2}\}$ stochastic samples $f_\gamma(\cdot)$. 
\end{theorem}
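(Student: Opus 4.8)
The plan is to establish the lower bound by an information-theoretic (statistical indistinguishability) argument: I will construct a family of stochastic submodular functions that are (i) all monotone submodular, (ii) indistinguishable to any algorithm that sees fewer than $\sim\beta/\epsilon^2$ samples, yet (iii) force any algorithm to incur an additive $\Omega(\epsilon)\cdot\mathrm{OPT}$ loss relative to the $(1-1/e)$ benchmark. The exponential term $\exp(\alpha k)$ will come from a separate, classical obstruction (the coverage/partition-matroid-type hard instance of Nemhauser--Wolsey showing that $1-1/e$ is information-theoretically tight for the \emph{value-oracle} model), so the final bound is the minimum of the two regimes.

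First I would fix a ground set $V$ and a cardinality constraint $k$, and take $f_\gamma(S)$ to be a \emph{randomized} set function whose expectation $f(S)=\EBB_\gamma[f_\gamma(S)]$ is a weighted coverage function. The natural construction: embed a Bernoulli-type parameter into the marginal gains so that two nearby parameter values $\theta_0,\theta_1$ with $|\theta_0-\theta_1|=\Theta(\epsilon)$ yield expected functions $f^{(0)},f^{(1)}$ whose optimal $(1-1/e)$-values differ by $\Theta(\epsilon\cdot\mathrm{OPT})$, while a single sample $f_\gamma(S)$ is a bounded random variable whose law under $\theta_0$ versus $\theta_1$ has KL divergence $O(\epsilon^2)$. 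Then by Pinsker / the standard two-point Le Cam method, any algorithm making $n$ queries cannot distinguish the two instances unless $n\cdot O(\epsilon^2)=\Omega(1)$, i.e. unless $n=\Omega(1/\epsilon^2)$; and any algorithm that fails to distinguish them must, on at least one of the two, return a solution that is $\Omega(\epsilon\cdot\mathrm{OPT})$-suboptimal relative to the $(1-1/e)$ target. This yields the $\beta/\epsilon^2$ term.

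The key steps, in order: (1) Define $f_\gamma$ explicitly and verify it is monotone and submodular for every realization of $\gamma$ (so the expectation is too, and the hardness is genuinely within the claimed class). (2) Compute $\mathrm{OPT}$ (or a sufficiently tight estimate) for each parameter value, and show the $(1-1/e)$-benchmark gap between the two instances is $\Theta(\epsilon\cdot\mathrm{OPT})$ --- this is where the parametrization must be chosen carefully so the gap is linear in $\epsilon$, not $\epsilon^2$. (3) Bound $\mathrm{KL}\bigl(p_{\theta_0}(f_\gamma(S)=\cdot)\,\big\|\,p_{\theta_1}(f_\gamma(S)=\cdot)\bigr)=O(\epsilon^2)$ uniformly over queried sets $S$, using the fact that the sample is a bounded random variable depending smoothly on $\theta$. (4) Invoke the two-point method for sequential/adaptive query algorithms: because queries may be adaptive, I would compare the joint laws of the entire transcript (query-answer history) under $\theta_0$ and $\theta_1$ and apply the chain rule for KL divergence, getting $\mathrm{KL}(\text{transcript})\le n\cdot O(\epsilon^2)$; total-variation distance is then $O(\sqrt{n}\,\epsilon)$, which is $<1/2$ unless $n=\Omega(1/\epsilon^2)$. (5) Conclude: on the instance for which the algorithm's output is wrong, the value is below $(1-1/e-\epsilon)\mathrm{OPT}$. (6) Separately, recall the Nemhauser--Wolsey construction on $k$ disjoint ``blocks'' to get the $\exp(\alpha k)$ obstruction for small $k$, and take the min.

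The main obstacle is step (2) combined with step (3): one must engineer the randomized function so that the \emph{information} separating the two instances is only $O(\epsilon^2)$ per sample (a second-order, variance-scale effect) while the \emph{value} separation in the $(1-1/e)$-benchmark is first-order $\Theta(\epsilon)$. Achieving both simultaneously forces a delicate choice of how the unknown parameter enters the marginal-return structure --- typically one wants the parameter to shift the \emph{mean} of a bounded $\{0,1\}$-valued (or small-range) contribution by $\Theta(\epsilon)$ so KL scales as $\epsilon^2$, while arranging that this $\Theta(\epsilon)$ mean-shift, after being amplified across $k$ coordinates or across the coverage structure, moves the optimum by $\Theta(\epsilon\cdot\mathrm{OPT})$ rather than washing out. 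Handling adaptivity of the oracle queries in step (4) is a routine but necessary technical point; the genuine difficulty is the estimation/separation trade-off in the instance design.
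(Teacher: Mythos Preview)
Your two core ingredients --- a Bernoulli/Le Cam estimation lower bound for the $1/\epsilon^2$ term and the Nemhauser--Wolsey value-oracle hardness for the $\exp(\alpha k)$ term --- are exactly the ones the paper uses. The difference is that the paper does not run them as two separate constructions joined by ``take the min'' (your step (6)); it layers the Bernoulli perturbation directly on top of the Nemhauser--Wolsey hard function $f_0$, producing a single stochastic instance $f$. The argument is then: any set that is $(1-1/e-\epsilon)$-optimal for $f$ is either $(1-1/e+\epsilon/4)$-optimal for $f_0$ (which requires $\exp(\alpha k)$ value queries to locate) or reveals the Bernoulli parameters to accuracy $\Theta(\epsilon)$ (which requires $\Omega(1/\epsilon^2)$ samples). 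The $\min$ arises from this either-or on a single instance.

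Decoupling them, as you propose, leaves a real gap at steps (2)/(5). In a pure two-point construction with $|\theta_0-\theta_1|=\Theta(\epsilon)$ and per-sample $\mathrm{KL}=O(\epsilon^2)$, the two expected functions are $O(\epsilon)$-close uniformly over $S$. Unless the base function already pins essentially every feasible $S$ at value $\approx(1-1/e)\mathrm{OPT}$, a set that is good for $f^{(0)}$ (e.g., the greedy output, which attains at least $(1-1/e)\mathrm{OPT}_0$) will still satisfy $f^{(1)}(S)\ge (1-1/e-C\epsilon)\mathrm{OPT}_1$ and hence be $(1-1/e-\epsilon)$-optimal for \emph{both} instances --- so indistinguishability yields nothing about the approximation problem. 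The only way to rule out a common good set is to start from a function where all but a hidden optimum are already stuck at the $(1-1/e)$ threshold, i.e., to embed $f_0$ into the construction. That is why the two hardness sources cannot be separated: the Nemhauser--Wolsey structure is precisely what makes your step (2) provable, and once you include it you have reproduced the paper's coupled instance.
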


\begin{corollary}
There exists a monotone DR-submodular function $F$, a convex constraint $\mathcal{C}$, and a stochastic first-order oracle $\mathcal{O}_{first}$, such that any method for maximizing $F$ subject to $\mathcal{C}$ requires at least $\min\{\exp(\alpha n), \beta/\epsilon^2\}$ queries from $\mathcal{O}_{first}$. 
\end{corollary}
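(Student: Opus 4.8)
The plan is to \emph{lift} the sample-complexity lower bound of Theorem~\ref{sub_lower} from the discrete problem~\eqref{eqn_stochastic_submodular_function_definition} to the continuous problem~\eqref{eqn_multilinear_extension_problem}, by exhibiting (a) a stochastic first-order oracle for the multilinear extension that is simulated using only a dimension-bounded number of set-function samples per call, and (b) a \emph{lossless, sample-free} rounding step. Concretely, let $f(S) = \EBB_{\gamma\sim p(\gamma)}[f_\gamma(S)]$ and $p(\gamma)$ be the monotone submodular function and distribution produced by Theorem~\ref{sub_lower} over a ground set $V$ with $|V| = n$, and fix the cardinality parameter $k = \lfloor n/2 \rfloor$ (any $k = \Theta(n)$ works). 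Take $F:[0,1]^V\to\RBB_+$ to be the multilinear extension of $f$ as in~\eqref{multilinear}, and let $\CM = \{\xB\in[0,1]^V : \sum_i x_i \leq k\}$ be the corresponding matroid (uniform-matroid) polytope. Then $F$ is monotone and DR-submodular, being the multilinear extension of a monotone submodular function, and $\CM$ is a bounded convex body, so $(F,\CM)$ is a legitimate instance of the problem in~\eqref{eqn_DR_submodular_maximization_loss}.

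Next I would construct the oracle $\mathcal{O}_{first}$. On a query $\xB\in[0,1]^V$ it draws one $\gamma\sim p(\gamma)$ and one random set $S(\xB)$ containing each $i\in V$ independently with probability $x_i$, then queries the $O(n)$ values $f_\gamma(S(\xB))$, $\{f_\gamma(S(\xB)\cup\{i\})\}_{i\notin S(\xB)}$, $\{f_\gamma(S(\xB)\setminus\{i\})\}_{i\in S(\xB)}$ from the set-function oracle, and returns the vector whose $i$-th coordinate is $f_\gamma(S(\xB)\cup\{i\}) - f_\gamma(S(\xB)\setminus\{i\})$. By the standard identity $\partial_i F(\xB) = \EBB[f(S(\xB)\cup\{i\}) - f(S(\xB)\setminus\{i\})]$, this is an unbiased estimate of $\nabla F(\xB)$; it has bounded norm because $f_\gamma$ is bounded in the construction of Theorem~\ref{sub_lower}; and, crucially, it consumes only $O(n)$ set-function samples per call, a number independent of both $\epsilon$ and the total number of oracle calls.

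The reduction then proceeds as follows. Suppose an algorithm $\mathcal{A}$ makes $N$ calls to $\mathcal{O}_{first}$ and outputs $\xB^\star\in\CM$ with $F(\xB^\star)\geq (1-1/e-\epsilon)\max_{\xB\in\CM}F(\xB)$. I simulate $\mathcal{A}$ by answering each oracle call with $O(n)$ fresh i.i.d.\ samples $f_\gamma(\cdot)$; since $\mathcal{A}$ may be adaptive, the simulation only needs each response to be generated from independent randomness, which it is. I then apply a swap-rounding (equivalently randomized pipage rounding) scheme to $\xB^\star$ over the matroid $M=\{V,\mathcal{I}\}$: such schemes use only $\xB^\star$ and the matroid structure, require \emph{no} further set-function evaluations, and satisfy $\EBB[f(S)]\geq F(\xB^\star)$ for the rounded set $S\in\mathcal{I}$. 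Because the continuous and discrete problems share the same optimum, $\max_{\xB\in\CM}F(\xB) = \max_{S\in\mathcal{I}}f(S) = \OPT$, the set $S$ is a $(1-1/e-\epsilon)$-optimal solution of~\eqref{eqn_stochastic_submodular_function_definition} in expectation, obtained from $O(nN)$ samples $f_\gamma(\cdot)$. Theorem~\ref{sub_lower} forces $O(nN) \geq \min\{\exp(\alpha k),\beta/\epsilon^2\}$, hence $N \geq \min\{\exp(\alpha k),\beta/\epsilon^2\}/(cn)$ for an absolute constant $c$; substituting $k = \lfloor n/2\rfloor$ and absorbing the $cn$ factor into the exponent and into $\beta$ (legitimate for all $n$ above an absolute threshold, since $\log n = o(n)$) yields $N \geq \min\{\exp(\alpha' n),\beta'/\epsilon^2\}$ after renaming the constants, which is exactly the claim.

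I expect the delicate points to be bookkeeping rather than conceptual. The first is verifying that a genuinely lossless, sample-free rounding routine is available for the relevant matroid polytope — swap rounding of Chekuri--Vondr\'ak--Zenklusen, or pipage rounding, both qualify — so that no hidden calls to the set-function oracle leak into the count; the second is checking the constant absorption $\exp(\alpha k)/n \geq \exp(\alpha' n)$ for the chosen $k = \Theta(n)$ and all sufficiently large $n$. Everything else is a direct composition of Theorem~\ref{sub_lower} with the equivalence of~\eqref{eqn_stochastic_submodular_function_definition} and~\eqref{eqn_multilinear_extension_problem} and the $O(n)$ cost of producing one unbiased gradient estimate of the multilinear extension.
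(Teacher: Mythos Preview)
Your proposal is correct and follows essentially the same route as the paper: lift the discrete lower bound of Theorem~\ref{sub_lower} to the continuous problem via the multilinear extension, observing that (i) the two problems share the same optimum and (ii) one unbiased stochastic gradient of the multilinear extension costs at most $O(d)$ set-function samples, so the sample complexities differ by at most a dimension factor that can be absorbed into the constants. If anything, your argument is more careful than the paper's terse justification, since you make explicit the sample-free rounding step (swap/pipage rounding) needed to turn a continuous solution into a discrete one without incurring additional oracle calls.
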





\vspace{-2mm}
\section{Conclusion}

In this paper we studied a class of stochastic conditional gradient methods for solving non-oblivious convex and nonconvex minimization problems as well as continuous DR-submodular maximization problems. In particular, (i) we proposed a stochastic variant of the Frank-Wolfe method called \FWPP for
minimizing a  smooth \emph{non-convex} stochastic function  subject to a convex body constraint. We showed that \FWPP finds an $\epsilon$-first order stationary point after at most $O(1/\epsilon^3)$ stochastic gradient evaluations; (ii) we further studied the convergence rate of \FWPP when we face a constrained \emph{convex} minimization problem and showed that \FWPP achieves an $\epsilon$-approximate optimum while using $O(1/\epsilon^2)$ stochastic gradients; (iii) we also extended the idea of our proposed variance reduced stochastic condition gradient method to the \emph{submodular} setting and developed \SCGPP, the first efficient variant of continuous greedy for maximizing a stochastic, continuous, monotone  DR-submodular function subject to a convex constraint. We showed that \SCGPP achieves a tight $[(1-1/e)\OPT -\epsilon]$ solution while using $O(1/\epsilon^2)$ stochastic gradients. We further derived a tight lower bound on the number of calls to the first-order stochastic oracle for achieving a $[(1-1/e)\OPT -\epsilon]$ approximate solution. This result showed that \SCGPP has the optimal sample complexity for finding an optimal $(1-1/e)$ approximation guarantee for monotone but stochastic DR-submodular functions. Finally, for maximizing a non-monotone continuous DR-submodular function, \SCGPP achieves a  $[(1/e)\OPT -\epsilon]$ solution after computing $O(1/\epsilon^2)$ stochastic gradients.

\appendix

%
%

\section{Proof of Lemma \ref{lemma_variance_general}}
We first present a lemma which bounds the second moment of the spectral norm of the Hessian estimator $\nabla^2 \tilde{F}(\yB; \zB)$.
\begin{lemma}
	Recall the definition of the Hessian estimator $\tilde\nabla^2 {F}(\yB; \zB)$ in \eqref{eqn_unbiased_second_order_differential_estimator}.
	Under Assumptions \ref{ass_upper_bound_stochastic_function_value}, \ref{ass_variance}, \ref{ass_smooth}, we bound for any $\yB \in \CM$
\begingroup
\setlength\abovedisplayskip{5pt}
	\begin{equation}
		\EBB_{\zB\sim p(\zB; \yB)}[\|\tilde\nabla^2 {F}(\yB; \zB)\|^2] \leq 4B^2G^4 + 16G^4  + 4L^2 + 4B^2L^2 \defi \bar{L}^2.\vspace{-3mm}
	\end{equation}
	\endgroup
	\label{lemma_appendix_hessian_spectral_norm}
\end{lemma}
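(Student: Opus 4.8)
The plan is to bound the spectral norm of $\tilde\nabla^2 F(\yB;\zB)$ pointwise in $\zB$, after grouping the five terms in the definition \eqref{eqn_unbiased_second_order_differential_estimator} into four blocks chosen so that the constants collapse exactly to the claimed values, and then take the expectation. Write
\begin{equation*}
\tilde\nabla^2 F(\yB;\zB) = B_1 + B_2 + B_3 + B_4,
\end{equation*}
where $B_1 \defi \tilde F(\yB;\zB)\,[\nabla\log p(\zB;\yB)][\nabla\log p(\zB;\yB)]^\top$, $B_2 \defi [\nabla\tilde F(\yB;\zB)][\nabla\log p(\zB;\yB)]^\top + [\nabla\log p(\zB;\yB)][\nabla\tilde F(\yB;\zB)]^\top$ collects the two ``cross'' rank-one terms, $B_3 \defi \nabla^2\tilde F(\yB;\zB)$, and $B_4 \defi \tilde F(\yB;\zB)\,\nabla^2\log p(\zB;\yB)$. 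The key point is that keeping the two cross terms together in one block is what makes the final constants come out as stated.

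First I would apply the elementary inequality $\|B_1+B_2+B_3+B_4\|^2 \le 4(\|B_1\|^2+\|B_2\|^2+\|B_3\|^2+\|B_4\|^2)$ and bound each block. For the rank-one pieces I use $\|uv^\top\| = \|u\|\,\|v\|$ for the spectral norm, together with $|\tilde F(\yB;\zB)|\le B$ (Assumption \ref{ass_upper_bound_stochastic_function_value}) and the uniform gradient bound $\|\nabla\tilde F(\yB;\zB)\|\le G_{\tilde F}\le G$ (Assumption \ref{ass_variance}); this gives $\|B_1\| \le B\,\|\nabla\log p(\zB;\yB)\|^2$ and, via the triangle inequality, $\|B_2\| \le 2G\,\|\nabla\log p(\zB;\yB)\|$. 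For the Hessian blocks I use Assumption \ref{ass_smooth}: $\|B_3\| = \|\nabla^2\tilde F(\yB;\zB)\| \le L_{\tilde F}\le L$ and $\|B_4\| \le B\,\|\nabla^2\log p(\zB;\yB)\|$.

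Then I take $\EBB_{\zB\sim p(\zB;\yB)}$ of the squares. The bound $\|B_3\|^2\le L^2$ is deterministic. For $B_1$ and $B_4$, Assumptions \ref{ass_variance} and \ref{ass_smooth} give $\EBB\|\nabla\log p(\zB;\yB)\|^4\le G_p^4\le G^4$ and $\EBB\|\nabla^2\log p(\zB;\yB)\|^2\le L_p^2\le L^2$, hence $\EBB\|B_1\|^2\le B^2G^4$ and $\EBB\|B_4\|^2\le B^2L^2$. For $B_2$ I first pass from the second to the fourth moment by Cauchy--Schwarz (equivalently Jensen for the concave map $t\mapsto\sqrt t$), $\EBB\|\nabla\log p(\zB;\yB)\|^2 \le \big(\EBB\|\nabla\log p(\zB;\yB)\|^4\big)^{1/2}\le G_p^2\le G^2$, so that $\EBB\|B_2\|^2\le 4G^4$. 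Multiplying each of these four bounds by the factor $4$ and summing yields $\EBB\|\tilde\nabla^2 F(\yB;\zB)\|^2 \le 4B^2G^4 + 16G^4 + 4L^2 + 4B^2L^2 = \bar L^2$, as claimed.

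There is no substantial obstacle: the argument is a chain of triangle inequalities and moment bounds drawn directly from Assumptions \ref{ass_upper_bound_stochastic_function_value}, \ref{ass_variance}, and \ref{ass_smooth}. The only steps needing a little care are the choice of grouping into the four blocks $B_1,\dots,B_4$ (which fixes the constants $4B^2G^4$, $16G^4$, $4L^2$, $4B^2L^2$) and the Cauchy--Schwarz step that lets the bounded fourth moment of $\|\nabla\log p(\zB;\yB)\|$ control its second moment; beyond that, the proof is routine bookkeeping.
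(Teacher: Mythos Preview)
Your proposal is correct and follows essentially the same route as the paper: group the five terms of \eqref{eqn_unbiased_second_order_differential_estimator} into four blocks (with the two cross rank-one terms combined), bound each block's spectral norm via the triangle inequality and Assumptions \ref{ass_upper_bound_stochastic_function_value}--\ref{ass_smooth}, apply $(a+b+c+d)^2\le 4(a^2+b^2+c^2+d^2)$, and then take expectations using the moment bounds. Your explicit Cauchy--Schwarz step $\EBB\|\nabla\log p\|^2\le (\EBB\|\nabla\log p\|^4)^{1/2}$ is precisely what the paper uses implicitly when it asserts the final constants.
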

\vspace{-2mm}
\begin{proof}
	From the definition of $\tilde\nabla^2 {F}(\yB; \zB)$, we have from assumptions \ref{ass_upper_bound_stochastic_function_value} and \ref{ass_variance}
	\begin{equation}
		\|\tilde{\nabla}^2 F(\yB; \zB)\| \leq B \|\nabla\log p(\zB;\yB)\|^2 + 2G\|\nabla\log p(\zB;\yB)\|  + L + B\|\nabla^2\log p(\zB;\yB)\|.
	\end{equation}
	Futher, taking expectation on both sides and use Assumption \ref{ass_smooth} to bound
	\begin{equation}
		\EBB[\|\tilde{\nabla}^2 F(\yB; \zB)\|^2] \leq 4B^2G^4 + 16G^4  + 4L^2 + 4B^2L^2.
	\end{equation}
	\end{proof}
\vspace{-2mm}
\begin{proof}[Lemma \ref{lemma_variance_general}]
	We prove via induction.
	When $t=0$, use the unbiasedness of $\nabla \tilde{F}(\xB^0; \zB)$ and Assumption \ref{ass_variance}, we bound
	\begingroup
	\setlength\abovedisplayskip{5pt}
	\begin{equation*}
	\begin{aligned}
		\EBB_{\MM_0} [\|F(\xB^{0}) - \gB^{0}\|^2] =& \frac{1}{|\MM_0|}\EBB[\|F(\xB^{0}) - \nabla \tilde{F}(\xB^0; \zB)\|^2] \\
		\leq& \frac{1}{|\MM_0|}\EBB[\|\nabla \tilde{F}(\xB^0; \zB)\|^2] \leq\frac{G^2}{|\MM_0|} \leq \bar{L}^2D^2\epsilon^2.
	\end{aligned}
	\end{equation*}
	\endgroup
	Now assume that we have the result for $t = \bar{t}$.
	When $t = \bar{t}+1$, we have
	\begin{equation*}
		\begin{aligned}
			\gB^t - \nabla F(\xB^t) =& \left[\gB^{t-1} - \nabla F(\xB^{t-1})\right] + \left[\xi_\delta(\xB;\MM) - \tilde{\nabla}_t^2(\xB^{t} - \xB^{t-1})\right] \\
			&+ \left[\tilde{\nabla}_t^2(\xB^{t} - \xB^{t-1}) - (\nabla F(\xB^{t}) - \nabla F(\xB^{t-1}))\right].
		\end{aligned}
	\end{equation*}
	Expand $\|\nabla F(\xB^t) - \gB^t\|^2$ to obtain
	\begin{align}
	\|\nabla F(\xB^t) \!-\! \gB^t\|^2 =& \|\nabla F(\xB^{t}) - \nabla F(\xB^{t-1}) - \tilde{\nabla}_t^2(\xB^{t} - \xB^{t-1})\|^2 + \|\gB^{t-1} - \nabla F(\xB^{t-1})\|^2 \notag\\
	& + 2\langle\nabla F(\xB^{t}) - \nabla F(\xB^{t-1}) - \tilde{\nabla}_t^2(\xB^{t} - \xB^{t-1}), \gB^{t-1} - \nabla F(\xB^{t-1})\rangle \notag\\
	& + 2\langle \tilde{\nabla}^2_t(\xB^{t} \!- \!\xB^{t-1}) - \xi_\delta(\xB;\MM), \nabla F(\xB^{t}) \!-\! \nabla F(\xB^{t-1}) \!-\! \tilde{\nabla}_t^2(\xB^{t}\! -\! \xB^{t-1})\rangle \notag\\
	& + 2\langle \tilde{\nabla}^2_t(\xB^{t} \!-\! \xB^{t-1}) - \xi_\delta(\xB;\MM), \gB^{t-1} - \nabla F(\xB^{t-1})\rangle \notag\\
	& + \|\tilde{\nabla}^2_t(\xB^{t} - \xB^{t-1}) - \xi_\delta(\xB;\MM)\|^2. \label{eqn_proof}
	\end{align}
	Using the unbiasedness of $\tilde{\nabla}_t^2(\xB^{t} - \xB^{t-1})$, we have
	\begin{equation}
	\EBB [\langle\nabla F(\xB^{t}) - \nabla F(\xB^{t-1}) - \tilde{\nabla}_t^2(\xB^{t} - \xB^{t-1}), \gB^{t-1} - \nabla F(\xB^{t-1})\rangle] = 0.
	\end{equation}
	Additionally, from the unbiasedness of $\tilde{\Delta}^t$, we have
	\begin{equation}
		\EBB [\|\tilde{\Delta}^t - (\nabla F(\xB^{t}) - \nabla F(\xB^{t-1}))\|^2]\leq\frac{\epsilon^2D^2}{|\MM|}\EBB[\|\nabla^2 F(\xB({a_1}); \zB_1({a_1}))\|^2] \leq \frac{\epsilon^2\bar{L}^2D^2}{|\MM|},
	\end{equation}
	where we use Lemma \ref{lemma_appendix_hessian_spectral_norm}.
	Taking expectation on both sides of (\ref{eqn_proof}), we have
	\begin{align*}
	&\EBB [\|\nabla F(\xB^t) - \gB^t\|^2] \\
	&=\EBB [\|\nabla F(\xB^{t}) - \nabla F(\xB^{t-1}) - \tilde{\nabla}_t^2(\xB^{t} - \xB^{t-1})\|^2] + \EBB [\|\gB^{t-1} - \nabla F(\xB^{t-1})\|^2] \notag\\
	&+ 2\EBB[\|\tilde{\nabla}^2_t(\xB^{t} - \xB^{t-1}) - \xi_\delta(\xB;\MM)\|\|\nabla F(\xB^{t}) - \nabla F(\xB^{t-1}) - \tilde{\nabla}_t^2(\xB^{t} - \xB^{t-1})\|] \notag\\
	&+\! 2\EBB[\|\tilde{\nabla}^2_t(\xB^{t} \!-\! \xB^{t-1}) \!-\! \xi_\delta(\xB;\MM)\|\|\gB^{t-1} \!-\! \nabla F(\xB^{t-1})\|] \!+\! \EBB[\|\tilde{\nabla}^2_t(\xB^{t} \!-\! \xB^{t-1}) \!-\! \xi_\delta(\xB;\MM)\|^2] \notag\\
	 &\leq \EBB [\|\nabla F(\xB^{t}) - \nabla F(\xB^{t-1}) - \tilde{\nabla}_t^2(\xB^{t} - \xB^{t-1})\|^2] + \EBB [\|\gB^{t-1} - \nabla F(\xB^{t-1})\|^2] \! +\! 4D^4L_2^2\delta^2\\
	&+ 4 D^2L_2\delta\|\nabla F(\xB^{t}) - \nabla F(\xB^{t-1}) - \tilde{\nabla}_t^2(\xB^{t} - \xB^{t-1})\| \!+\! 4 D^2L_2\delta\|\gB^{t-1} \!-\!\nabla F(\xB^{t-1})\| \\
	&\leq \frac{\bar{L}^2D^2\epsilon^2}{|\MM|} \!+\! (1\!+\!\epsilon{(t\!-\!1)})\bar{L}^2D^2\epsilon^2 \!+\! 4\delta\!\left[\frac{D^2L_2\bar{L}D\epsilon}{\sqrt{|\MM|}} \!+\!  D^2L_2\sqrt{(1\!+\!\epsilon{(t\!-\!1)})}\bar{L}D\epsilon \!+\! D^4L_2^2\delta\right]
	\end{align*}
	By taking $\delta$ sufficiently small such that 
	\begin{equation}
		4\delta\left((1/{\sqrt{|\MM|}}){D^2L_2\bar{L}D\epsilon} +  D^2L_2\sqrt{(1+\epsilon{(t-1)})}\bar{L}D\epsilon + D^4L_2^2\delta\right) \leq \bar{L}^2D^2\epsilon^3/2,
		\label{eqn_small_delta}
	\end{equation}
	we have shown that the induction holds for $t = \bar{t}+1$.
\end{proof}

\section{Proof of Theorem \ref{thm_nonconvex}}

First we prove the following lemma.

\begin{lemma}
	Recall the definition in \eqref{eqn_unbiased_second_order_differential_estimator}.
	Under Assumptions \ref{ass_upper_bound_stochastic_function_value}, \ref{ass_variance}, \ref{ass_smooth}, and
	by taking $q = G/(16\epsilon)$, $|\MM_0^t| = G^2/(8\epsilon^2)$, $|\MM_h^t| = 2G/\epsilon$, and $\eta_t = \epsilon/(\bar LD)$, we bound
	\begin{equation}
	\EBB [\|\gB^t - \nabla F(\xB^{t})\|^2] \leq \epsilon^2/4,
	\end{equation} 
	where $\bar{L}$ is defined in Lemma \ref{lemma_variance_general}.
	\label{lemma_nonconcave_var_bound}
\end{lemma}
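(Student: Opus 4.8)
The plan is to run the variance induction of Lemma~\ref{lemma_variance_general} (in its cleaner $\delta\to 0$ form, corresponding to the estimator $\tilde\Delta^t$ of \eqref{eqn_gradient_difference_estimator}) but only \emph{within a single episode} of $q$ consecutive iterations between two restarts, so that the accumulated error never leaves the band $[0,\epsilon^2/4]$. Fix a restart time $t_0$ (a multiple of $q$) and write $t=t_0+s$ with $0\le s<q$; let $\mathcal{F}_{t-1}$ be the history of the algorithm through the update \eqref{eqn_update_fw} that produces $\xB^t$. At the restart itself, $\gB^{t_0}$ is the average of $|\MM_0^{t_0}|$ i.i.d.\ unbiased single-sample estimators of $\nabla F(\xB^{t_0})$ (in the non-oblivious case this is the score-function estimator $\nabla\tilde F(\xB^{t_0};\zB)+\tilde F(\xB^{t_0};\zB)\nabla\log p(\zB;\xB^{t_0})$, whose second moment is controlled by Assumptions~\ref{ass_upper_bound_stochastic_function_value} and~\ref{ass_variance}), so $\EBB[\|\gB^{t_0}-\nabla F(\xB^{t_0})\|^2]=\OM(G^2/|\MM_0^{t_0}|)$, which the stated choice of $|\MM_0^t|$ drives below a fixed fraction of $\epsilon^2$.

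For a non-restart step I would write $\gB^t-\nabla F(\xB^t)=\bigl(\gB^{t-1}-\nabla F(\xB^{t-1})\bigr)+\bigl(\tilde\Delta^t-(\nabla F(\xB^t)-\nabla F(\xB^{t-1}))\bigr)$, expand the square, and take expectation conditional on $\mathcal{F}_{t-1}$. By Lemma~\ref{lemma_hessian_estimator} and \eqref{eqn_integral_2} we have $\EBB[\tilde\Delta^t\mid\mathcal{F}_{t-1}]=\nabla F(\xB^t)-\nabla F(\xB^{t-1})$, so the cross term vanishes; and since $\tilde\Delta^t=\tilde\nabla_t^2(\xB^t-\xB^{t-1})$ is an average over $|\MM_h^t|$ i.i.d.\ Hessian-estimator terms, its conditional variance is at most $\|\xB^t-\xB^{t-1}\|^2\bar L^2/|\MM_h^t|$ by Lemma~\ref{lemma_appendix_hessian_spectral_norm}. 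Using $\|\xB^t-\xB^{t-1}\|=\eta_{t-1}\|\vB^{t-1}-\xB^{t-1}\|\le \eta_{t-1}D=\epsilon/\bar L$ from the update rule and Assumption~\ref{ass_compact_domain}, the per-step increment is at most $\epsilon^2/|\MM_h^t|=\epsilon^3/(2G)$. This gives the one-step recursion $\EBB[\|\gB^t-\nabla F(\xB^t)\|^2]\le \EBB[\|\gB^{t-1}-\nabla F(\xB^{t-1})\|^2]+\epsilon^3/(2G)$ for every non-restart $t$.

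Unrolling over the episode then yields $\EBB[\|\gB^t-\nabla F(\xB^t)\|^2]\le \OM(G^2/|\MM_0^{t_0}|)+s\cdot\epsilon^3/(2G)\le \OM(G^2/|\MM_0^{t_0}|)+q\,\epsilon^3/(2G)$, and substituting $q=\OM(G/\epsilon)$, $|\MM_h^t|=\OM(G/\epsilon)$ and $|\MM_0^t|=\OM(G^2/\epsilon^2)$ collapses both terms to a constant multiple of $\epsilon^2$; tuning those constants (as in the statement) gives the bound $\epsilon^2/4$ uniformly in $t$. The main obstacle—and what separates this from a textbook SPIDER-type bound—is the non-oblivious bookkeeping in the second paragraph: one must be sure that the specific construction $\tilde\Delta^t=\tilde\nabla_t^2(\xB^t-\xB^{t-1})$ with the auxiliary uniform variable $a$ and the compound sampling $\zB(a)\sim p(\,\cdot\,;\xB(a))$ is genuinely unbiased for $\nabla F(\xB^t)-\nabla F(\xB^{t-1})$ (so the cross term really is zero), and that the second-moment bound $\bar L^2$ of Lemma~\ref{lemma_appendix_hessian_spectral_norm} holds \emph{uniformly over $a\in[0,1]$}, i.e.\ at every interpolated point $\xB(a)$; granting these, the remainder is a geometric-sum estimate.
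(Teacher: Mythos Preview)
Your proposal is correct and follows essentially the same route as the paper's proof: split the episode at the most recent restart $t_0$, bound the restart error by $G^2/|\MM_0^{t_0}|$, kill the cross term by the unbiasedness of $\tilde\Delta^t$, bound each per-step variance increment by $\bar L^2\|\xB^t-\xB^{t-1}\|^2/|\MM_h^t|$ via Lemma~\ref{lemma_appendix_hessian_spectral_norm} together with the step bound $\|\xB^t-\xB^{t-1}\|\le \eta_t D=\epsilon/\bar L$, and telescope over at most $q$ steps. Your explicit conditioning on $\mathcal F_{t-1}$ and your remark that the second-moment bound of Lemma~\ref{lemma_appendix_hessian_spectral_norm} must hold uniformly at the interpolated points $\xB(a)$ are points the paper leaves implicit; otherwise the arguments coincide. (As a side note, if you substitute the stated constants literally, the restart term $G^2/|\MM_0^{t_0}|=8\epsilon^2$ already exceeds $\epsilon^2/4$; the paper's choice appears to contain a typographical slip---e.g.\ $|\MM_0^t|=8G^2/\epsilon^2$ makes both contributions $\epsilon^2/8$---so your use of $\OM(\cdot)$ for the final constant-matching is the prudent move.)
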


\begin{proof}
	For $t$ such that $mod(t, p)\neq 0$,
	\begin{equation*}
	\begin{aligned}
	&\EBB [\|\gB^{t} - \nabla F(\xB^{t})\|^2]
	= \EBB [\|\tilde{\nabla}_t^2 [\xB^{t} - \xB^{t-1}] + \gB^{t-1} - \nabla F(\xB^{t})\|^2] \\
	=& \EBB [\|\tilde{\nabla}_t^2 [\xB^{t} - \xB^{t-1}] - (\nabla F(\xB^{t}) - \nabla F(\xB^{t-1}))\|^2] + \EBB [\|\gB^{t-1} - \nabla F(\xB^{t-1})\|^2] \\
	=& \frac{1}{|\MM_h^t|} \EBB [\|\tilde{\nabla}_{1}^2 [\xB^{t} - \xB^{t-1}]- (\nabla F(\xB^{t}) - \nabla F(\xB^{t-1}))\|^2] + \EBB [\|\gB^{t-1} - \nabla F(\xB^{t-1})\|^2]\\
	\leq& \frac{1}{|\MM_h^t|} \EBB [\|\tilde{\nabla}_{1}^2 [\xB^{t} - \xB^{t-1}]\|^2] + \EBB [\|\gB^{t-1} - \nabla F(\xB^{t-1})\|^2].
	\end{aligned}
	\end{equation*}
	Observe that $\xB^{t+1} - \xB^{t} = \eta_t (\vB^t -\xB^{t})$ and therefore
	\begin{equation*}
	\begin{aligned}
	\EBB [\|\gB^{t} - \nabla F(\xB^{t})\|^2] &\leq \frac{4\eta_t^2D^2}{|\MM_h^t|} \EBB [\|\tilde{\nabla}_{1}^2\|^2] + \EBB [\|\gB^{t-1} - \nabla F(\xB^{t-1})\|^2] \\
	&\leq \frac{4\eta_t^2D^2\bar{L}^2}{|\MM_h^t|} + \EBB [\|\gB^{t-1} - \nabla F(\xB^{t-1})\|^2],
	\end{aligned}
	\end{equation*}
	where we use Lemma \ref{lemma_appendix_hessian_spectral_norm} in the second inequality.
	Denote by $k_0 = q\times\lfloor t/q\rfloor$ and $k = mod(t, q) \leq q$.
	Repeat the above recursion $k$ times to obtain
	\begin{align*}
	\EBB [\|\gB^{t} - \nabla F(\xB^{t})\|^2] &\leq \frac{4k\eta_t^2D^2\bar{L}^2}{|\MM_h^t|} + \EBB [\|\gB^{k_0} - \nabla F(\xB^{k_0})\|^2]= \frac{4q\eta_t^2D^2\bar{L}^2}{|\MM_h^t|} + \frac{G^2}{|\MM_0^t|}.
	\end{align*}
	By setting $q = G/(16\epsilon)$, $|\MM_0^t| = G^2/(8\epsilon^2)$, $|\MM_h^t| = 2G/\epsilon$, and $\eta_t = \epsilon/\bar LD$, we have
	\begin{equation}
	\EBB [\|\gB^t - \nabla F(\xB^{t})\|^2] \leq \epsilon^2/4.
	\end{equation}
\end{proof}

Now we are ready to prove the claim in Theorem \ref{thm_nonconvex}. 
From Lemma \ref{lemma_appendix_hessian_spectral_norm}, we have 
\begin{equation}
\|\nabla^2 F(\xB)\|^2 \leq \|\EBB_{\zB \sim p(\zB; \xB)} [\nabla ^2 \tilde{F}(\xB; \zB)]\|^2\leq \EBB_{\zB \sim p(\zB; \xB)}[\|\nabla ^2 \tilde{F}(\xB; \zB)\|^2]\leq \bar{L}^2.
\end{equation}
Hence, $F$ is $\bar{L}$-smooth.
From the smoothness of $F$
\begin{align*}
F(\xB^{t+1})                                                                                                         
\geq & ~  F (\xB^{t}) + \langle\nabla F (\xB^{t}), \xB^{t+1}-\xB^{t}\rangle - \frac{\bar L}{2}\|\xB^{t+1} - \xB^{t}\|^2             \\
=    & ~  F (\xB^{t}) + \langle \gB^{t}, \xB^{t+1} - \xB^{t}\rangle  - \frac{\bar L}{2}\|\xB^{t+1} - \xB^{t}\|^2   + \langle\nabla F (\xB^{t}) - \gB^{t}, \xB^{t+1} - \xB^{t}\rangle  \\
=    & ~  F (\xB^{t}) + \eta\langle \gB^{t}, \vB^{t}-  \xB^{t}\rangle  - \frac{\bar L\eta^2}{2}\|\uB^{t} - \xB^{t}\|^2 + \eta\langle\nabla F (\xB^{t}) - \gB^{t}, \vB^{t} - \xB^{t}\rangle                                           \\
\geq & ~  F (\xB^{t}) + \eta\langle \gB^{t}, \vB^{t}-  \xB^{t}\rangle - 2\bar L\eta^2D^2 - 2\eta D\|\nabla F (\xB^{t}) - \gB^{t}\|.
\end{align*}
Denoting by $\vB^+ = \argmax_{\vB\in C} \langle \nabla F(\xB^{t}), \vB -  \xB^{t}\rangle$, we have
\begingroup
\setlength\abovedisplayskip{5pt}
\begin{equation*}
\begin{aligned}
V_\CM(\xB^{t}; F) \!=\!  \langle \nabla F(\xB^{t}) \!-\! \gB^{t}, \vB^+ \!-\! \xB^{t}\rangle + \langle \gB^{t}, \vB^+ \!-\! \xB^{t}\rangle \leq  2D\|\nabla F(\xB^{t}) \!-\! \gB^{t}\| + \langle \gB^{t}, \vB^{t} \!-\! \xB^{t}\rangle.
\end{aligned}
\end{equation*}
\endgroup
These two bounds together give
\begingroup
\setlength\abovedisplayskip{5pt}
\begin{equation*}
\eta V_\CM(\xB^{t}; F) \leq F (\xB^{t+1}) - F (\xB^{t}) + 4\eta D\|\nabla F(\xB^{t}) - \gB^{t}\| + 2\bar L\eta^2D^2.
\end{equation*}
\endgroup
As $\eta={\epsilon}/{\bar LD}$ and $\EBB[\|\nabla F(\xB^{t}) - \gB^{t}\|^2] \leq {\epsilon^2}/{4}$ in Lemma \ref{lemma_nonconcave_var_bound}, we have for $t\geq 1$ 
\begin{align*}
\quad\frac{\epsilon}{\bar L D}\EBB[V_\CM(\xB^{t}; F)] \leq & \quad \EBB[F(\xB^{t+1})] - \EBB[F(\xB^{t})] + 2\epsilon^2/\bar L + 4\epsilon/\bar L\cdot\EBB[\|\nabla F(\xB^{t}) - \gB^{t}\|]                 \\
\leq & \quad \EBB[F(\xB^{t+1})] - \EBB[F(\xB^{t})] + {4\epsilon^2}/{\bar L}.
\end{align*}
Sum the above inequalities from $t=1$ to $T$ and multiply both sides by $\bar L D$ to obtain 
\vspace{-2mm}
\begin{equation*}
\sum_{t=1}^{T}{\epsilon}\EBB[V_\CM(\xB^{t}; f)]\leq \bar L D(F(\xB^*) - F(\xB^1)) + T\cdot{4D\epsilon^2}.\vspace{-2mm}
\end{equation*}
Hence, by sampling $t_0$ from $[T]$ uniformly at random, we have
\begin{equation*}
\EBB[V_\CM(\xB^{t}; F)] \leq \frac{D\bar L(F(\xB^*) - F(\xB^1))}{T\epsilon} + 4\epsilon D,
\end{equation*}
and thus when $T = {\bar L (F(\xB^*) - F(\xB^1))}/{\epsilon^2}$, we have $\EBB[V_\CM(\xB^{t}; F)] \leq 5\epsilon D$.

\section{Proof of Theorem \ref{thm_convex}}

We first prove the following lemma.

\begin{lemma}
	Recall the definition of the Hessian estimator in \eqref{eqn_unbiased_second_order_differential_estimator}.
	Under Assumptions \ref{ass_upper_bound_stochastic_function_value}, \ref{ass_variance}, \ref{ass_smooth},
	by taking $|\MM_h^t| = {16}{(t+2)}$ and $|\MM_0^{t}| = \frac{G^2(t+1)^2}{\bar{L}^2D^2}$, we bound
	\begin{equation}
	\EBB [\|\gB^t - \nabla F(\xB^{t})\|^2] \leq 2\bar{L}^2D^2\eta_{t}^2,
	\end{equation} 
	where $\bar{L}$ is defined in Lemma \ref{lemma_variance_general}.
	\label{lemma_concave_var_bound}
\end{lemma}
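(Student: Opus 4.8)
The plan is to run the same argument as in Lemma~\ref{lemma_nonconcave_var_bound}, but adapted to the geometric restart rule of the concave branch of Algorithm~\ref{algo_FW++}: since $\gB$ is re-initialized whenever $\log_2 t \in \ZBB$, the iterates split into the dyadic blocks $[t_0, 2t_0-1]$ with $t_0 = 2^k$, and it suffices to establish $\EBB\|\gB^t - \nabla F(\xB^t)\|^2 \le 2\bar{L}^2 D^2 \eta_t^2$ separately on each such block, since every block begins with a fresh minibatch $\MM_0^{t_0}$. Fix a block with left endpoint $t_0$.

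\emph{Base of the block.} At $t = t_0$ we have $\gB^{t_0} = \nabla \tilde{F}(\xB^{t_0}; \MM_0^{t_0})$, an average of $|\MM_0^{t_0}| = G^2(t_0+1)^2/(\bar{L}^2 D^2)$ i.i.d.\ unbiased samples, so Assumption~\ref{ass_variance} gives $\EBB\|\gB^{t_0} - \nabla F(\xB^{t_0})\|^2 \le G^2/|\MM_0^{t_0}| = \bar{L}^2 D^2/(t_0+1)^2$. Since $t_0 + 2 \le 2(t_0+1)$, this is at most $4\bar{L}^2 D^2/(t_0+2)^2 = \bar{L}^2 D^2 \eta_{t_0}^2 \le 2\bar{L}^2 D^2 \eta_{t_0}^2$, as required.

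\emph{Unrolling inside the block.} For $t_0 < t \le 2t_0 - 1$ use the recursion $\gB^t = \gB^{t-1} + \tilde{\Delta}^t$ of~\eqref{g_def_rec}. Conditioning on the history $\mathcal{F}_{t-1}$, which determines $\gB^{t-1}$, $\xB^{t-1}$, and $\xB^t$, the increment $\tilde{\Delta}^t = \tilde{\nabla}_t^2(\xB^t - \xB^{t-1})$ is built from the independent fresh minibatch $\MM_h^t$ and satisfies $\EBB[\tilde{\Delta}^t \mid \mathcal{F}_{t-1}] = \nabla F(\xB^t) - \nabla F(\xB^{t-1})$ by Lemma~\ref{lemma_hessian_estimator} together with~\eqref{eqn_integral_2}. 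Expanding the square and using that the cross term vanishes in conditional expectation,
\[
\EBB\|\gB^t - \nabla F(\xB^t)\|^2 = \EBB\|\gB^{t-1} - \nabla F(\xB^{t-1})\|^2 + \EBB\|\tilde{\Delta}^t - (\nabla F(\xB^t) - \nabla F(\xB^{t-1}))\|^2 .
\]
The last term is the variance of an average of $|\MM_h^t|$ independent summands, each with second moment at most $\EBB[\|\tilde{\nabla}^2 F(\yB;\zB)\|^2]\,\|\xB^t - \xB^{t-1}\|^2 \le \bar{L}^2 \eta_{t-1}^2 D^2$, using the pointwise bound of Lemma~\ref{lemma_appendix_hessian_spectral_norm} and $\|\xB^t - \xB^{t-1}\| = \eta_{t-1}\|\vB^{t-1} - \xB^{t-1}\| \le \eta_{t-1} D$; with $\eta_{t-1} = 2/(t+1)$ and $|\MM_h^t| = 16(t+2)$ it is therefore at most $\bar{L}^2 D^2/(4(t+1)^2(t+2)) \le \bar{L}^2 D^2/(4(t+1)^3)$. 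Unrolling from $t_0$ and bounding $\sum_{s > t_0}(s+1)^{-3} \le \int_{t_0}^{\infty}(x+1)^{-3}\,dx = 1/(2(t_0+1)^2)$,
\[
\EBB\|\gB^t - \nabla F(\xB^t)\|^2 \le \frac{\bar{L}^2 D^2}{(t_0+1)^2} + \frac{\bar{L}^2 D^2}{8(t_0+1)^2} = \frac{9\bar{L}^2 D^2}{8(t_0+1)^2} .
\]
Finally, $t \le 2t_0 - 1$ gives $t + 2 \le 2(t_0+1)$, so $2\bar{L}^2 D^2 \eta_t^2 = 8\bar{L}^2 D^2/(t+2)^2 \ge 2\bar{L}^2 D^2/(t_0+1)^2 \ge 9\bar{L}^2 D^2/(8(t_0+1)^2)$, which establishes the claimed bound throughout the block and hence for all $t$.

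\emph{Main obstacle.} The probabilistic bookkeeping --- unbiasedness of $\tilde{\Delta}^t$ and its conditional independence of $\gB^{t-1}$ --- duplicates that of Lemma~\ref{lemma_variance_general}; the delicate point is making the constants survive the dyadic restart schedule. One must track that inside a block $\eta_t$ shrinks by a factor of at most two relative to $\eta_{t_0}$, so that the post-restart variance $G^2/|\MM_0^{t_0}|$ plus the summable accumulation of Hessian-estimation errors stays below the moving target $2\bar{L}^2 D^2 \eta_t^2$; the schedules $|\MM_0^t| = G^2(t+1)^2/(\bar{L}^2 D^2)$ and $|\MM_h^t| = 16(t+2)$ are precisely what keeps each of these contributions at most a constant fraction of the target. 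If Option~II of Step~8 is used in place of the exact Hessian--vector estimator, one adds the same $O(\delta)$ correction as in~\eqref{eqn_small_delta} with $\delta = O(\epsilon^2)$ and the argument is unchanged.
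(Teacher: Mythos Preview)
Your proposal is correct and follows essentially the same approach as the paper: split into dyadic blocks $[2^k,2^{k+1}-1]$, bound the fresh-minibatch error at the block start by $G^2/|\MM_0^{t_0}|$, unroll the variance recursion using Lemma~\ref{lemma_appendix_hessian_spectral_norm} to bound each increment by $\bar{L}^2\eta^2 D^2/|\MM_h^t|$, and then sum. The only cosmetic differences are that the paper controls the tail sum $\sum_i (i+2)^{-3}$ via the telescoping identity $\tfrac{1}{(i+1)(i+2)}-\tfrac{1}{(i+2)(i+3)}$ whereas you use the integral test, and that you track the step-size index as $\eta_{t-1}$ (from $\xB^t-\xB^{t-1}=\eta_{t-1}(\vB^{t-1}-\xB^{t-1})$) while the paper absorbs the shift into an extra factor of~$4$; both yield the same final constant.
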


\begin{proof}
	Assume iteration $t$ is in the $k^{th}$ epoch, i.e., $2^k\leq t < 2^{k+1}$.
	For $t\neq 2^k$,
	\begin{equation*}
	\begin{aligned}
	&\EBB [\|\gB^{t} - \nabla F(\xB^{t})\|^2]
	= \EBB [\|\tilde{\nabla}_t^2 [\xB^{t} - \xB^{t-1}] + \gB^{t-1} - \nabla F(\xB^{t})\|^2] \\
	=& \EBB [\|\tilde{\nabla}_t^2 [\xB^{t} - \xB^{t-1}] - (\nabla F(\xB^{t}) - \nabla F(\xB^{t-1}))\|^2] + \EBB [\|\gB^{t-1} - \nabla F(\xB^{t-1})\|^2] \\
	=& \frac{1}{|\MM_h^t|} \EBB [\|\tilde{\nabla}_{1}^2 [\xB^{t} - \xB^{t-1}]- (\nabla F(\xB^{t}) - \nabla F(\xB^{t-1}))\|^2] + \EBB [\|\gB^{t-1} - \nabla F(\xB^{t-1})\|^2]\\
	\leq& \frac{1}{|\MM_h^t|} \EBB [\|\tilde{\nabla}_{1}^2 [\xB^{t} - \xB^{t-1}]\|^2] + \EBB [\|\gB^{t-1} - \nabla F(\xB^{t-1})\|^2].
	\end{aligned}
	\end{equation*}
	Observe that $\xB^{t+1} - \xB^{t} = \eta_t (\vB^t -\xB^{t})$ and therefore
	\begin{equation*}
	\begin{aligned}
	\EBB [\|\gB^{t} - \nabla F(\xB^{t})\|^2] &\leq \frac{4\eta_t^2D^2}{|\MM_h^t|} \EBB [\|\tilde{\nabla}_{1}^2\|^2] + \EBB [\|\gB^{t-1} - \nabla F(\xB^{t-1})\|^2] \\
	&\leq \frac{4\eta_t^2D^2\bar{L}^2}{|\MM_h^t|} + \EBB [\|\gB^{t-1} - \nabla F(\xB^{t-1})\|^2],
	\end{aligned}
	\end{equation*}
	where we use Lemma \ref{lemma_appendix_hessian_spectral_norm} in the second inequality.
	Repeat the above recursion $t - 2^k < {2^k}$ times (since $t< 2^{k+1}$), we obtain
	\begin{align*}
	&\EBB [\|\gB^{t} - \nabla F(\xB^{t})\|^2] \leq \EBB [\|\gB^{2^k} - \nabla F(\xB^{2^k})\|^2] + \sum_{i=2^k}^{t} 4D^2\bar{L}^2\cdot\frac{\eta_{i}^2}{|\MM_h^i|} \\
	&\leq \frac{G^2}{|\MM_0^{2^k}|} + \sum_{i=2^k}^{t} \frac{D^2\bar{L}^2}{(i+2)^3} 
	\leq \bar{L}^2D^2\eta_{2^{k+1}}^2 + \frac{D^2\bar{L}^2}{2}\sum_{i=2^k}^{t} \left[\frac{1}{(i+1)(i+2)} - \frac{1}{(i+2)(i+3)}\right] \\
	&\leq \bar{L}^2D^2\eta_{2^{k+1}}^2 + D^2\bar{L}^2 \frac{1}{(2^k+1)(2^{k-1}+1)} \leq 2\bar{L}^2D^2\eta_{2^{k+1}}^2 \leq 2\bar{L}^2D^2\eta_{t}^2
	\end{align*}
\end{proof}

Now we are ready to prove the claim in Theorem \ref{thm_convex}. 
From Lemma \ref{lemma_appendix_hessian_spectral_norm}, we have 
\begin{equation}
\|\nabla^2 F(\xB)\|^2 \leq \|\EBB_{\zB \sim p(\zB; \xB)} [\tilde\nabla ^2 {F}(\xB; \zB)]\|^2\leq \EBB_{\zB \sim p(\zB; \xB)}[\|\tilde\nabla ^2 {F}(\xB; \zB)\|^2]\leq \bar{L}^2.
\end{equation}
The boundedness of the Hessian $\nabla^2 F$ is equivalent to the smoothness of $F$.
Let $\xB^*$ be a global maximizer within the constraint set $\CM$. By the smoothness of $F$, we have
\begin{align}
F(\xB^{t+1}) &\geq F(\xB^t) + \langle\nabla F(\xB^t), \xB^{t+1} - \xB^t\rangle - ({\bar{L}}/{2})\|\xB^{t+1} - \xB^t\|^2 \notag\\
&= F(\xB^t) + \eta_{t}\langle\nabla F(\xB^t), \vB^t - \xB^{t}\rangle - ({\bar{L}\eta_{t}^2}/{2})\|\vB^t -\xB^{t}\|^2 \notag\\
&=F(\xB^t) + \eta_{t}\langle\gB^t, \vB^t -\xB^{t}\rangle + \eta_{t}\langle\nabla F(\xB^t) - \gB^t, \vB^t - \xB^{t}\rangle - 2\bar{L}\eta_{t}^2D^2 \notag \\
&\geq F(\xB^t) + \eta_{t}\langle\gB^t, \xB^* - \xB^{t}\rangle + \eta_{t}\langle\nabla F(\xB^t) - \gB^t, \vB^t - \xB^{t}\rangle - 2\bar{L}\eta_{t}^2D^2, 
\end{align}
where we use the optimality and boundedness of $\vB^t$ in the last inequality.
Take expectation, and use the unbiasedness of $\gB_{vr}^{t}$ and Young's inequality to obtain
\begin{align*}
\EBB [F(\xB^{t+1})] &\geq \EBB [F(\xB^t)] + \eta_{t}\langle\nabla F(\xB^{t}), \xB^* - \xB^{t}\rangle - \frac{1}{2\bar{L}}\EBB[\|\nabla F(\xB^t) - \gB^t\|^2] - 6\bar{L}\eta_{t}^2D^2.
\end{align*}
From the convexity of $F$, we have $\langle\nabla F(\xB^t), \xB^* -\xB^{t}\rangle \geq F(\xB^*) - F(\xB^t)$ and thus
\begin{equation}
\EBB [F(\xB^{t+1})]\geq \EBB [F(\xB^t)] + \eta_{t}\EBB[F(\xB^*) - F(\xB^t)] - \frac{1}{2\bar{L}}\EBB[\|\nabla F(\xB^t) - \gB^t\|^2] - 6\bar{L}\eta_{t}^2D^2.
\label{eqn_proof_convex_1}
\end{equation}
By using Lemma \ref{lemma_variance_general} with $|\MM_0| = \frac{G^2}{\bar{L}^2D^2\eta_{t}^2}$ and $|\MM_h| = \frac{1}{\eta_{t}}$, we have
\begin{equation}
\EBB[\|\nabla F(\xB^{t}) - \gB^{t}\|^2]\leq 2\bar{L}^2D^2\eta_{t}^2.
\label{eqn_proof_convex_2}
\end{equation}
Let $\delta_t \defi F(\xB^*) - F(\xB^{t})$ and $c \defi \max \{14\bar{L}D^2, \delta_0\}$.
Combining (\ref{eqn_proof_convex_1}) and (\ref{eqn_proof_convex_2}) gives
\begin{equation*}
\EBB [\delta_{t+1}] \leq (1-\eta_{t})\EBB [\delta_t ]+ c\eta_{t}^2/2.
\end{equation*}
By taking $\eta_t = \frac{2}{t+2}$ and by induction we obtain $\EBB \delta_{t} \leq \frac{2c}{t+2}$: For $t=0$, it trivially holds.
Assume $\EBB [\delta_{t_0}] \leq \frac{2c}{t_0+2}$ with $t_0\geq 1$. For $t = t_0+1$,
\begin{equation}
\EBB[ \delta_{t_0+1} ]\leq \frac{t_0}{t_0+2}\cdot\frac{2c}{t_0+2} + \frac{2c}{(t_0+2)^2} \leq \frac{2c}{t_0+3}.
\end{equation}
In conclusion, we have $
F(\xB^*) - \EBB [F(\xB^t)]\leq \frac{28\bar{L}D^2 + (F(\xB^*) - F(\xB^0))}{t+2}
$.

\section{Proof of Theorem \ref{thm_main}}
	Theorem \ref{thm_main} is identical to Theorem 1 of the conference version of this paper \cite{NIPS2019_9466}, and we refer the reader to the detailed proof therein. Here we only provide a sketch.
	From Lemma \ref{lemma_appendix_hessian_spectral_norm}, $F$ can be proved to be $\bar{L}$-smooth like in Theorem \ref{thm_nonconvex}.
	By using Lemma \ref{lemma_variance_general}, for all $t\in \{0, \ldots, T-1\}$ we have
		$\EBB[\|\nabla F(\xB^{t}) - \gB^{t}\|^2]\leq 2\bar{L}^2D^2\epsilon^2$.
	Let $\xB^*$ be the global maximizer within the constraint set $\CM$. 
	We can prove function value increases for $T = {1}/{\epsilon}$
	\vspace{-2mm}
	\begin{equation*}
	\EBB [F(\xB^{t+1})]\geq \EBB [F(\xB^t)] + \epsilon\EBB[F(\xB^*) - F(\xB^t)] - 2\bar{L}\epsilon^2D^2,
	\end{equation*}
	which is equivalent to $
	\EBB [F(\xB^*) - F(\xB^{t+1})]\leq (1-\epsilon)^T\EBB[F(\xB^*) - F(\xB^t)] - 2\bar{L}\epsilon D^2$. 
	In conclusion, we have
	$\EBB [F(\xB^T)]\geq (1-1/e)F(\xB^*) - 2\bar{L}\epsilon D^2$.

\section{Proof of Theorem \ref{thm_main_nm}}
	We note that \NMSCGPP shares the same structure as NMSCG except the gradient estimation.
	Following the same proof in Appendix H. of \cite{mokhtari2018stochastic}, we arrive at the following inequality ((113) in \cite{mokhtari2018stochastic})
	\begin{equation}
		F(\xB^{t+1}) - F(\xB^{t}) \geq \frac{1}{T}[(1-\frac{1}{T})^tF(\xB^*) - F(\xB^t)] - \frac{2D}{T}\|\nabla F(\xB^t) - \gB^t\| - \frac{\bar LD^2}{2 T^2}.
		\label{eqn_proof_nmscgpp_1}
	\end{equation}
	Recall the variance bound in Lemma \ref{lemma_variance_general}.
	By taking $\epsilon = 1/T$, we have $\EBB\|\nabla F(\xB^t) - \gB^t\|\leq \sqrt{\EBB\|\nabla F(\xB^t) - \gB^t\|^2}\leq \sqrt{2}\bar LD/T$.
	Take expectations on both sides of \eqref{eqn_proof_nmscgpp_1} and plug in the above variance bound to arrive
	\vspace{-2mm}
	\begin{equation}
		\EBB[F(\xB^{t+1})] \geq (1-\frac{1}{T})\EBB[F(\xB^{t})] + \frac{1}{T}(1-\frac{1}{T})^tF(\xB^*) - \frac{(4\sqrt{2}+1)\bar LD^2}{2T^2}.
		\label{eqn_proof_nmscgpp_2}
	\end{equation}
	Multiplying $(1-\frac{1}{T})^{-(t+1)}$ on both sides of \eqref{eqn_proof_nmscgpp_2}, we have
	\begin{equation*}
		(1-\frac{1}{T})^{-(t+1)}\EBB[F(\xB^{t+1})] \geq (1-\frac{1}{T})^{-t}\EBB[F(\xB^{t})] + \frac{1}{T}(1-\frac{1}{T})^{-1}F(\xB^*) - \frac{(4\sqrt{2}+1)\bar LD^2}{2T^2(1-\frac{1}{T})^{(t+1)}}.
		\label{eqn_proof_nmscgpp_3}
	\end{equation*}
	Sum the above inequality from $t=0$ to $T-1$ and use $F(\xB^{0}) \geq 0$ to obtain
	\begin{equation}\label{eqn_proof_nmscgpp_4}
	\begin{aligned}
	&(1-T^{-1})^{-T}\EBB[F(\xB^T)] \\
	\geq& F(\xB^{0}) + (1-T^{-1})^{-1}F(\xB^*) - {(2\sqrt{2}+0.5)\bar LD^2T^{-2}}\cdot[(1-T^{-1})^{-T}-1](T\!-\!1) \\
	\geq& (1-T^{-1})^{-1}F(\xB^*) - {(2\sqrt{2}+0.5)\bar LD^2T^{-2}}\cdot[(1-T^{-1})^{-T}-1](T-1)
	\end{aligned}
	\end{equation}
	Multiply $(1-T^{-1})^{T}$ on both sides of \eqref{eqn_proof_nmscgpp_4}
	\begin{align}
		\EBB[F(\xB^T)] &\geq (1-T^{-1})^{T-1}F(\xB^*) - {(2\sqrt{2}+0.5)\bar LD^2T^{-2}}\cdot[1-(1-T^{-1})^{T}](T-1) \notag \\
		&\geq e^{-1}F(\xB^*) - {(2\sqrt{2}+0.5)\bar LD^2T^{-1}}.
	\end{align}

\subsection{Multilinear Extension as Non-oblivious Stochastic Optimization} \label{section_multilinear_to_non_oblivious}
We proceed to show that the problem in \eqref{multilinear} is captured by \eqref{eqn_DR_submodular_maximization_loss}. To do so, 
use $\text{Ber}(b; m)$ with $b\in\{0, 1\}$ and $m\in[0, 1]$ to denote the Bernoulli distribution with parameter $m$, i.e., $\text{Ber}(b; m) = m^b(1-m)^{1-b}$.
Define $p(\zB,\gamma; \xB)$ as
\vspace{-3mm}
\begin{equation}\label{eqn_multilinear_extension_p}
p(\zB,\gamma; \xB) = p(\gamma)\times \prod_{i=1}^{d} \text{Ber}(\zB_i; \xB_i),
\vspace{-2mm}
\end{equation}
where $p(\gamma)$ is defined in (\ref{eqn_stochastic_submodular_function_definition}), $\zB_i$ is the $i^{th}$ entry of $\zB$, and $\xB_i$ is the $i^{th}$ entry of $\xB$.
Let $N(\zB)$ be a subset of $N$ such that $i\in N(\zB)$ iff $\zB_i = 1$.
We then define $\tilde{F}(\xB;\zB,\gamma)$ as
\vspace{-1mm}
\begin{equation}\label{eqn_multilinear_extension_f}
\tilde{F}(\xB; \zB,\gamma) = f_\gamma(N(\zB)),
\vspace{-1mm}
\end{equation}
where $f_\gamma$ is defined in (\ref{eqn_stochastic_submodular_function_definition}). 
For a fixed $\zB$, the stochastic function $\tilde{F}$ does not depend on $\xB$ and hence $\nabla \tilde{F}(\xB; \zB) = 0$.
By considering the definition of $\tilde{F}(\xB; \zB,\gamma)$ in \eqref{eqn_multilinear_extension_f}, the multilinear extension function $F$ in \eqref{multilinear}, and the probability distribution $p(\zB,\gamma; \xB)$ in \eqref{eqn_multilinear_extension_p} it can be verified that $F$ is the expectation of the random $\tilde{F}(\xB; \zB,\gamma)$, and, therefore, the problem in \eqref{multilinear} can be written as \eqref{eqn_DR_submodular_maximization_loss}. 

At the first glance, it seems that we can apply \SCGPP to maximize the multilinear extension function $F$. However, the smoothness conditions required for the result in Theorem~\ref{thm_main} do not hold in the multilinear setting. Following the result in Lemma~\ref{lemma_hessian_estimator}, we can derive an unbiased estimator for the second-order differential of \eqref{multilinear} using
\vspace{-2mm}
\begin{equation*}
\begin{aligned}
\tilde{\nabla}^2 F(\yB; \zB) =\tilde{F}(\yB;\zB)\left[[\nabla\log p(\zB,\gamma;\yB)] [\nabla\log p(\zB,\gamma;\yB)]^\top + \nabla^2\log p(\zB,\gamma;\yB)\right], \\
= f_\gamma(N(\zB))\left[[\sum_{i=1}^{d}\nabla\log \text{Ber}(\zB_i; \xB_i)] [\sum_{i=1}^{d}\nabla\log \text{Ber}(\zB_i; \xB_i)]^\top + \sum_{i=1}^{d}\nabla^2\log \text{Ber}(\zB_i; \xB_i)\right],
\end{aligned}\vspace{-2mm}
\end{equation*}
{{where we use $\nabla \tilde{F}(\xB; \zB) = 0$ in the first equality}} and use \eqref{eqn_multilinear_extension_p} and \eqref{eqn_multilinear_extension_f} in the second one.
Further, note that $[\nabla\log \text{Ber}(\zB_i; \xB_i)]^2+\nabla^2\log \text{Ber}(\zB_i; \xB_i) = 0$ for all $i\in[d]$ and hence, the above estimator can be further simplified to
\vspace{-2mm}
\begin{equation}\label{eqn_multilinear_extension_Hessian_estiamtor_1}
\tilde{\nabla}^2 F(\yB; \zB,\gamma) = f_\gamma(N(\zB))\sum_{i,j = 1}^{d}\mathbbm{1}_{i\neq j}[\nabla\log \text{Ber}(\zB_i; \xB_i)] [\nabla\log \text{Ber}(\zB_j; \xB_j)]^\top.\vspace{-2mm}
\end{equation}
Despite the simple form of \eqref{eqn_multilinear_extension_Hessian_estiamtor_1}, the smoothness property in Assumption \ref{ass_smooth} is absent since every entry in the matrix $\tilde{\nabla}^2 F(\yB; \zB,\gamma)$ can have unbounded second-order moment when $\xB_i \rightarrow 0$ or $\xB_i \rightarrow 1$.

\subsection{Detailed Implementation of \SCGPP for Multilinear Extension}

\begin{algorithm}[t]
	\caption{(\SCGPP) for Multilinear Extension}
	\label{algo_scg++_multi}
	\begin{algorithmic}[1]
		\REQUIRE Minibatch size $|\MM_0|$ and $|\MM|$, and total number of rounds $T$
		\FOR{$t = 1$ \TO $T$}
		\IF{$t = 1$}
		\STATE Sample $\MM_0$ of $(\gamma, \zB)$ according to $p(\zB, \gamma;\xB^0)$ and compute $\gB^0$ using \eqref{eqn_multilinear_extension_gradient_estimator};
		\ELSE
		\STATE Compute the Hessian approximation $\tilde{\nabla}_{\MM}^2 = \frac{1}{|\MM|} \sum_{k=1}^{|\MM|}\tilde{\nabla}_{k}^2$ based on \eqref{eqn_multilinear_extension_hessian_stochastic_ineqj};
		\STATE Construct $\tilde{\Delta}^t$ based on \eqref{eqn_gradient_difference_estimator};
		\STATE Update the stochastic gradient approximation $\gB^t := \gB^{t-1} + \tilde{\Delta}^t;$
		\ENDIF
		\STATE Compute the ascent direction $\vB^t := \argmax_{\vB\in\CM}\{\vB^\top\gB^t\}$;
		\STATE Update the variable $\xB^{t+1} := \xB^t + 1/{T}\cdot{\vB^t}$;
		\ENDFOR
	\end{algorithmic}
\end{algorithm}
We briefly mentioned the Hessian estimator $\tilde{\nabla}^2_k$ in \eqref{eqn_multilinear_extension_hessian_stochastic_ineqj}.
In this section, we describe \SCGPP for minimizing the Multilinear Extension \eqref{eqn_multilinear_extension_problem} in Algorithm \ref{algo_scg++_multi}.
In particular, we specify the gradient construction for $\xB^0$ by using the following equality
\begin{eqnarray}
	[\nabla F(\xB)]_i = F(\xB; \xB_i\leftarrow 1) - F(\xB; \xB_i\leftarrow 0).
	\label{eqn_multilinear_extension_gradient}
\end{eqnarray}
Since both terms in \eqref{eqn_multilinear_extension_gradient} are in expectation, we can sample a mini-batch $\MM_0$ of $(\gamma, \zB)$ from \eqref{eqn_multilinear_extension_gradient} to obtain an unbiased estimator of $\nabla F(\xB)$
\begin{equation}
	[\gB^0]_i \defi \frac{1}{|\MM_0|}\sum_{k=1}^{|\MM_0|} f_{\gamma_k}(N(\zB_k)\cup \{i\}) - f_{\gamma_k}(N(\zB_k)\setminus \{i\}).
	\label{eqn_multilinear_extension_gradient_estimator}
\end{equation}

\section{Proof of Lemma \ref{lemma_multilinear_extension_lemma}}
First note that we can write  the gradient $
	\nabla_{\xB_i}\log Ber(\zB_i; \xB_i) = \frac{\zB_i}{\xB_i} - \frac{1-\zB_i}{1-\xB_i}$.
We use $\zB_{\backslash i,j}$ to denote the random vector $\zB$ excluding the $i^{th}$ and $j^{th}$ entries, and denote by $\zB;\zB_i\leftarrow c_i, \zB_j\leftarrow c_j$ the vectors obtained by setting the $i^{th}$ and $j^{th}$ entries of $\zB$ to the corresponding $c_i$ and $c_j$.
Compute $\EBB_{\zB\sim p(\zB;\xB)}[\tilde{\nabla}^2 F(\yB; \zB,\gamma)]_{i,j}$ using \eqref{eqn_multilinear_extension_Hessian_estiamtor_1} 
\vspace{-1mm}
	\begin{align}
		\EBB_{\zB\sim p(\zB,\gamma;\xB)}[\tilde{\nabla}^2 F(\yB; \zB,\gamma)]_{i,j} 
		=& \EBB_{\zB}\left[f(N(\zB))[\nabla_{\xB_i} \! \log Ber(\zB_i; \xB_i)] [\nabla_{\xB_j}\log Ber(\zB_j; \xB_j)]\right]\notag\\
		 =& \sum_{c_i, c_j \in \{0, 1\}^2} \EBB_{\zB_{\backslash i,j}} f(N(\zB;\zB_i\leftarrow c_i, \zB_j\leftarrow c_j))(-1)^{c_i}(-1)^{c_j},
		\label{eqn_discrete_submodualr_Hessian_element}
	\end{align}\vspace{-2mm}
where in the first equality we use $\EBB_{\gamma} f_\gamma = f$ and in the second one uses
\vspace{-2mm}
\begin{equation}
	\xB_i^{c_i}\cdot(1-\xB_i)^{1-c_i}\cdot[\frac{\cB_i}{\xB_i} - \frac{1-\cB_i}{1-\xB_i}] = - (-1)^{c_i}.\vspace{-2mm}
\end{equation}
While there are four possible configurations for $c_i$ and $c_j$ in \eqref{eqn_discrete_submodualr_Hessian_element}, we discuss in detail the configuration of $c_i=c_j = 1$. The other three can be obtained similarly.
\vspace{-2mm}
\begin{equation}
	\EBB_{\zB_{\backslash i,j}} f(N(\zB;\zB_i\leftarrow 1, \zB_j\leftarrow 1)) = F(\yB; \yB_i\leftarrow 1, \yB_j \leftarrow 1),\vspace{-1mm}
\end{equation}
which recovers the first term in \eqref{eqn_multilinear_extension_lemma}.

\section{Proof of Theorem \ref{thm:multi}}
Theorem \ref{thm:multi} is identical to Theorem 2 of the conference version of this paper \cite{NIPS2019_9466}, hence we refer the reader to the detailed proof therein. Here we only provide a sketch.
By exploiting the sparsity of $\vB^{t}$ and the upper bound on the $\|\cdot\|_{2, \infty}$ norm of $\tilde{\nabla}_{k}^2$, we can obtain the following variance bound on $\gB^{t}$, which has an explicit dependence on the problem dimension $d$.
\begin{lemma}\label{kashk}
	Recall the constructions of the gradient estimator \eqref{g_def_rec} and the Hessian estimator \eqref{eqn_multilinear_extension_hessian_stochastic_ineqj}.
	In the multilinear extension problem \eqref{eqn_multilinear_extension_problem}, under Assumption \ref{ass_bounded_marginal_return}, we have the following variance bound  
	\vspace{-3mm}
	\begin{equation}
	\EBB [\|\gB^{t} - \nabla F(\xB^{t})\|^2] \leq \frac{4r^2d \cdot\epsilon}{|\MM|}  D_f^2 + \frac{dD_f^2}{|\MM_0|}.\vspace{-2mm}
	\end{equation}
\end{lemma}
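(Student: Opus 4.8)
The proof is by induction on $t$ and follows the template of the proof of Lemma~\ref{lemma_variance_general}, with two substitutions: every spectral-norm estimate is replaced by the columnwise bound $\EBB[\|\tilde{\nabla}_{t}^2\|_{2,\infty}^2]\le 4dD_f^2$ recorded just above, and we exploit that the direction $\vB^t$ returned by the linear oracle over the rank-$r$ matroid polytope has at most $r$ nonzero entries. Throughout, set $e_t\defi\gB^t-\nabla F(\xB^t)$, $\Delta^t\defi\nabla F(\xB^t)-\nabla F(\xB^{t-1})$, and $\epsilon\defi 1/T$, where $T$ is the total number of rounds.

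For the base case $t=0$: by \eqref{eqn_multilinear_extension_gradient}, the estimator $\gB^0$ in \eqref{eqn_multilinear_extension_gradient_estimator} is unbiased for $\nabla F(\xB^0)$, and each summand $f_{\gamma_k}(N(\zB_k)\cup\{i\})-f_{\gamma_k}(N(\zB_k)\setminus\{i\})$ is a marginal value of $f_{\gamma_k}$, hence bounded in magnitude by $D_{\gamma_k}$. Averaging the $|\MM_0|$ i.i.d.\ samples coordinatewise and summing the resulting coordinate variances over $i\in[d]$ gives $\EBB\|e_0\|^2\le dD_f^2/|\MM_0|$, which is below the claimed bound.

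For the inductive step, the recursion \eqref{g_def_rec} with $\tilde{\Delta}^t=\tilde{\nabla}_{t}^2(\xB^t-\xB^{t-1})$ gives the exact identity $e_t=e_{t-1}+(\tilde{\Delta}^t-\Delta^t)$. Since $\xB^t$ depends only on $\gB^{t-1}$ while the minibatch $\MM$ and the parameters $a_k$ at iteration $t$ are fresh, \eqref{eqn_integral}--\eqref{eqn_integral_2} show that $\tilde{\Delta}^t$ is conditionally unbiased for $\Delta^t$; hence the cross term vanishes in expectation and $\EBB\|e_t\|^2=\EBB\|e_{t-1}\|^2+\EBB\|\tilde{\Delta}^t-\Delta^t\|^2$. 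The $|\MM|$ single-sample Hessian estimators in \eqref{eqn_multilinear_extension_hessian_stochastic_ineqj} are i.i.d.\ given the history, so bounding variance by the second moment yields $\EBB\|\tilde{\Delta}^t-\Delta^t\|^2\le\frac{1}{|\MM|}\EBB\|\tilde{\nabla}_{1}^2(\xB^t-\xB^{t-1})\|^2$. Now sparsity enters: $\xB^t-\xB^{t-1}=\frac{1}{T}\vB^{t-1}$ and $\vB^{t-1}$ is the indicator $\mathbf{1}_I$ of an independent set with $|I|\le r$, so $\tilde{\nabla}_{1}^2(\xB^t-\xB^{t-1})=\frac{1}{T}\sum_{j\in I}\tilde{\nabla}_{1}^2\eB_j$, and therefore $\|\tilde{\nabla}_{1}^2(\xB^t-\xB^{t-1})\|^2\le\frac{|I|}{T^2}\sum_{j\in I}\|\tilde{\nabla}_{1}^2\eB_j\|^2\le\frac{r^2}{T^2}\|\tilde{\nabla}_{1}^2\|_{2,\infty}^2$. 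Taking expectations and using $\EBB\|\tilde{\nabla}_{1}^2\|_{2,\infty}^2\le 4dD_f^2$ gives $\EBB\|\tilde{\Delta}^t-\Delta^t\|^2\le 4r^2dD_f^2/(|\MM|T^2)$. Summing the recursion from $s=1$ to $t$, and using the base case together with $t\le T$ and $\epsilon=1/T$, we obtain $\EBB\|e_t\|^2\le dD_f^2/|\MM_0|+4r^2d\epsilon D_f^2/|\MM|$, as claimed.

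The only delicate point is the per-step bound: obtaining the factor $r^2$ — rather than the $rd$ a crude $\|\tilde{\nabla}_{1}^2\|\,\|\xB^t-\xB^{t-1}\|$ estimate would produce — requires combining the $\ell_0$-sparsity $\|\vB^{t-1}\|_0\le r$ of the matroid-polytope vertex with the columnwise $\|\cdot\|_{2,\infty}$ control on the Hessian estimator, and one must keep track that the vertex appearing in $\xB^t-\xB^{t-1}$ is $\vB^{t-1}$, not $\vB^t$. Everything else is the mechanical telescoping already used for Lemma~\ref{lemma_variance_general}.
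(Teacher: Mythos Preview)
Your proof is correct and follows essentially the same approach the paper sketches: replace the spectral-norm bound of Lemma~\ref{lemma_variance_general} by the columnwise $\|\cdot\|_{2,\infty}$ control $\EBB[\|\tilde{\nabla}_{1}^2\|_{2,\infty}^2]\le 4dD_f^2$, exploit the $r$-sparsity of the matroid vertex $\vB^{t-1}$ appearing in $\xB^t-\xB^{t-1}=\frac{1}{T}\vB^{t-1}$, and telescope the unbiased recursion. Your remark that the sparsity must be combined with the $\|\cdot\|_{2,\infty}$ norm (not the spectral norm) to get $r^2$ rather than $rd$ is exactly the point the paper highlights, and the careful tracking that it is $\vB^{t-1}$ is correct.
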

By choosing $|\MM| = \frac{2}{\epsilon}$ and $|\MM_0| = \frac{1}{2r^2\epsilon^2}$, we have
$\EBB[\|\nabla F(\xB^t) - \gB^t\|^2] \leq 4r^2d \cdot\epsilon^2  D_f^2$.
Further, from Taylor's expansion we can prove
\vspace{-1mm}
\begin{equation}
F(\xB^{t+1}) - F(\xB^{t}) - \langle \nabla F(\xB^{t}), \xB^{t+1} - \xB^{t}\rangle \geq - \frac{1}{2}\sqrt{rdD_f^2}\cdot\|\xB^{t+1} - \xB^{t}\|^2.
\label{eqn_proof_discrete}
\end{equation}
\vspace{-1mm}
Following the update rule of \SCGPP, the function value can be proved to decrease in each iteration: for $T =\frac{1}{\epsilon}$
\vspace{-1mm}
\begin{equation*}
\EBB [F(\xB^{t+1})]\geq \EBB [F(\xB^t)] + \epsilon\EBB[F(\xB^*) - F(\xB^t)] - 6\sqrt{r^3dD_f^2}\cdot\epsilon^2,
\end{equation*}
which can be translated to 
$\EBB [F(\xB^*) - F(\xB^{\frac{1}{\epsilon}})]\leq (1-\epsilon)^{\frac{1}{\epsilon}}[F(\xB^*) - F(\xB^{0})] - 6\sqrt{r^3d}\cdot D_f\cdot\epsilon$.
In conclusion, we have
$\EBB[ F(\xB^{\frac{1}{\epsilon}})]\geq (1-1/e)F(\xB^*) - 6\sqrt{r^3d}\cdot D_f\cdot\epsilon$.

\section{Proof of Theorem~\ref{thm:nm-multi}}
	From \eqref{eqn_proof_discrete} we have 
	\vspace{-1mm}
	\begin{equation}
	F(\xB^{t+1}) - F(\xB^{t}) - \langle \nabla F(\xB^{t}), \xB^{t+1} - \xB^{t}\rangle \geq - \frac{1}{2}\sqrt{rdD_f^2}\cdot\|\xB^{t+1} - \xB^{t}\|^2.
	\end{equation}
	By using the above result and following the similar proof in Appendix H. of \cite{mokhtari2018stochastic}, we arrive at the following inequality
	\vspace{-2mm}
	\begin{equation}
	F(\xB^{t+1}) - F(\xB^{t}) \geq \frac{1}{T}[(1-\frac{1}{T})^tF(\xB^*) - F(\xB^t)] - \frac{2r}{T}\|\nabla F(\xB^t) - \gB^t\| - \frac{\sqrt{r^3dD_f^2}}{2 T^2}.
	\label{eqn_proof_nmdscgpp_1}
	\end{equation}
	By using Lemma \ref{kashk} and by choosing $|\MM| = \frac{2}{\epsilon}$ and $|\MM_0| = \frac{1}{2r^2\epsilon^2}$, we have
	\begin{equation}
	\EBB[\|\nabla F(\xB^t) - \gB^t\| ]\leq \sqrt{\EBB[\|\nabla F(\xB^t) - \gB^t\|^2]} \leq 2r\sqrt{d} \epsilon  D_f.
	\end{equation}
	Take expectation on both sides of \eqref{eqn_proof_nmdscgpp_1} and plug in the above variance bound to arrive
	\begin{equation}
	\EBB[F(\xB^{t+1})] \geq (1-\frac{1}{T})\EBB[F(\xB^{t})] + \frac{1}{T}(1-\frac{1}{T})^tF(\xB^*) - \frac{5}{2T^2} {\sqrt{r^3dD_f^2}}.
	\label{eqn_proof_nmdscgpp_2}
	\end{equation}
	By multiplying $(1-\frac{1}{T})^{-(t+1)}$ on both sides of \eqref{eqn_proof_nmdscgpp_2}, we have
	\vspace{-3mm}
	\begin{equation*}
	(1-\frac{1}{T})^{-(t+1)}\EBB[F(\xB^{t+1})] \geq (1-\frac{1}{T})^{-t}\EBB[F(\xB^{t})] + \frac{1}{T}(1-\frac{1}{T})^{-1}F(\xB^*) - \frac{5\sqrt{r^3dD_f^2}}{2T^2(1-\frac{1}{T})^{(t+1)}}.\vspace{-2mm}
	\end{equation*}
	Sum the above inequality from $t=0$ to $T-1$ and use $F(\xB^{0}) \geq 0$ to obtain
	\begin{align}
	&(1-T^{-1})^{-T}\EBB[F(\xB^T)] \notag\\
	\geq &F(\xB^{0}) + (1-T^{-1})^{-1}F(\xB^*) - {2.5T^{-2}\sqrt{r^3dD_f^2}}\cdot[(1-T^{-1})^{-T}-1](T-1) \notag \\
	\geq &(1-T^{-1})^{-1}F(\xB^*) - {2.5T^{-2}\sqrt{r^3dD_f^2}}\cdot[(1-T^{-1})^{-T}-1](T-1).
	\label{eqn_proof_nmdscgpp_4}
	\end{align}
	Multiply $(1-T^{-1})^{T}$ on both sides of \eqref{eqn_proof_nmdscgpp_4} to derive
	\begin{align}
	\EBB[F(\xB^T)] &\geq (1-T^{-1})^{T-1}F(\xB^*) - {2.5T^{-2}\sqrt{r^3dD_f^2}}\cdot[1-(1-T^{-1})^{T}](T-1) \notag \\
	&\geq e^{-1}F(\xB^*) - {2.5T^{-1}\sqrt{r^3dD_f^2}},
	\end{align}
	where we use $(1-T^{-1})^{T-1}\geq e^{-1}$.

\section{Proof of Theorem~\ref{sub_lower}}
Theorem \ref{thm:multi} is identical to Theorem 3 of the conference version of this paper \cite{NIPS2019_9466} and we refer the reader to the detailed proof therein. Here we only provide a sketch.
Our goal is to construct a submodular function $f$, defined through the expectation $f(S) = \mathbb{E}_{\gamma \sim p(\gamma)}[f_\gamma(S)]$, such that obtaining a $(1-1/e - \epsilon)$-optimal solution for maximizing $f$ under the $k$-cardinality constraint requires at least $\min\{\exp(\alpha(\epsilon) k), O(1/\epsilon^2)\}$ i.i.d. samples $f_\gamma(\cdot)$.  For maximizing monotone submodular functions under the $k$-cardinality constraint, we know that going beyond the the approximation factor $(1-1/e)$ is computationally hard. In other words, one can construct a specific monotone submodular set function, call it $f_0$, such that finding a $(1-1/e+\delta)$-optimal solution requires at least $\exp\{\alpha(\delta) k\}$ function queries. The main idea of the proof is to slightly change the value of $f_0$, by adding Bernoulli random variables whose success probabilities are small--say of order $\epsilon$--, such that the following holds: In order to obtain a $(1-1/e-\epsilon)$-optimal solution for the new function $f$ under the cardinality constraint, one would need to either find $(1-1/e+\epsilon/4)$-optimal solution for $f_0$ (which requires exponentially many samples), or to accurately estimate the parameters of the added Bernoulli random variables--a task that is known information-theoretically to require at least $O(1/\epsilon^2)$ i.i.d. samples from the Bernoulli random variables.  The function $f$ is the desired stochastic function of the theorem.

{{
\bibliographystyle{siamplain}
\bibliography{scgpp}
}}

\end{document}